\documentclass[11pt,letterpaper]{amsart}
\usepackage[utf8]{inputenc}
\usepackage[T1]{fontenc}
\usepackage{amsmath,amsthm,amssymb}
\usepackage[shortlabels]{enumitem}
\usepackage{xcolor}
\usepackage{bbm}
\usepackage{todonotes}
\usepackage{mathtools}

\DeclarePairedDelimiter{\floor}{\lfloor}{\rfloor}

\newtheorem{theorem}{Theorem}[section]
\newtheorem{remark}[theorem]{Remark}
\newtheorem{definition}[theorem]{Definition}

\newtheorem*{conjecture*}{Conjecture}

\newtheorem*{acknowledgement}{Acknowledgements}

\newtheorem{lemma}[theorem]{Lemma}
\newtheorem{prop}[theorem]{Proposition}

\newcommand{\R}{\mathbb{R}}
\newcommand{\T}{\mathbb{T}}

\newcommand{\Z}{\mathbb{Z}}
\newcommand{\gesim}{\gtrsim}
\newcommand{\lesim}{\lesssim}
\newcommand{\eps}{\varepsilon}

\newcommand{\cH}{\mathcal{H}}

\newcommand{\norm}[1]{\| #1 \|}

\newcommand{\RapDec}{\mathrm{RapDec}}

\newcommand{\hphi}{\widehat{\phi}}
\newcommand{\hmu}{\widehat{\mu}}
\newcommand{\hsigma}{\widehat{\sigma}}
\newcommand{\Bad}{\mathrm{Bad}}

\newcommand{\cJ}{\mathcal{J}}

\newcommand{\spt}{\mathrm{spt}\,}

\newcommand{\cpsi}{\Check{\psi}}
\newcommand{\cE}{\mathcal{E}}
\newcommand{\Tjbad}{\T_{j,\mathrm{bad}}}
\newcommand{\Csep}{C_{\text{sep}}}

\newcommand{\dist}{\mathrm{dist}}
\newcommand{\surf}{\mathrm{Surf}}
\newcommand{\one}{\mathbbm{1}}

\begin{document}

\title[Falconer distance set problem]{New improvement to Falconer distance set problem in higher dimensions}
\author[X. Du, Y. Ou, K. Ren and R. Zhang]{Xiumin Du, Yumeng Ou, Kevin Ren and Ruixiang Zhang}

\begin{abstract}
We show that if a compact set $E\subset \mathbb{R}^d$ has Hausdorff dimension larger than $\frac{d}{2}+\frac{1}{4}-\frac{1}{8d+4}$, where $d\geq 3$, then there is a point $x\in E$ such that the pinned distance set $\Delta_x(E)$ has positive Lebesgue measure. This improves upon bounds of Du--Zhang and Du--Iosevich--Ou--Wang--Zhang in all dimensions $d \ge 3$. We also prove lower bounds for Hausdorff dimension of pinned distance sets when $\dim_H (E) \in (\frac{d}{2} - \frac{1}{4} - \frac{3}{8d+4}, \frac{d}{2}+\frac{1}{4}-\frac{1}{8d+4})$, which improves upon bounds of Harris and Wang--Zheng in dimensions $d \ge 3$.
\end{abstract}

\maketitle

\section{Introduction}
\setcounter{equation}0
A classical question in geometric measure theory, introduced by Falconer in the early 80s (\cite{falconer1985hausdorff}) is, how large does the Hausdorff dimension of a compact subset of ${\R}^d$, $d\ge 2$ need to be to ensure that the Lebesgue measure of the set of its pairwise Euclidean distances is positive.

Let $E\subset\mathbb{R}^d$ be a compact set, its \emph{distance set} $\Delta(E)$ is defined by
$$
\Delta(E):=\{|x-y|:x,y\in E\}\,.
$$

\begin{conjecture*}\label{conj} \textup{[Falconer]}
Let $d\geq 2$ and $E\subset\mathbb{R}^d$ be a compact set. Then
$$
{\dim_H}(E)> \frac d 2 \Rightarrow |\Delta(E)|>0.
$$
Here $|\cdot|$ denotes the Lebesgue measure and ${\dim_H}(\cdot)$ is the Hausdorff dimension.
\end{conjecture*}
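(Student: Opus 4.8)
The plan is to run the Frostman-measure / distance-measure framework and reduce Falconer's conjecture to a single sharp Fourier-decay estimate. Given a compact $E \subset \R^d$ with $\dim_H(E) > d/2$, fix $s$ with $d/2 < s < \dim_H(E)$ and, by Frostman's lemma, a Borel probability measure $\mu$ supported on $E$ with finite $s$-energy $I_s(\mu) := \int\!\!\int |x-y|^{-s}\,d\mu(x)\,d\mu(y) < \infty$. Let $\nu$ be the push-forward of $\mu \times \mu$ under the map $(x,y) \mapsto |x-y|$; it is a probability measure carried by the (compact) set $\Delta(E)$. It then suffices to prove $\nu \in L^2(\R)$: for then $\nu$ is absolutely continuous with density $g \in L^2$, with $g \not\equiv 0$ since $\int g = 1$, and $\Delta(E) \supseteq \{g > 0\}$ up to a Lebesgue-null set, so $|\Delta(E)| > 0$.

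Second, a standard computation (Plancherel on $\R$, stationary phase for $\widehat{d\sigma_R}$, and the formula $\widehat{\nu}(t) = \int\!\!\int e^{-2\pi i t|x-y|}\,d\mu(x)\,d\mu(y)$) bounds
$$
\norm{\nu}_{L^2(\R)}^2 \lesim 1 + \int_1^\infty \Big( \int_{S^{d-1}} |\hmu(R\omega)|^2 \, d\omega \Big)^2 R^{d-1} \, dR .
$$
So everything reduces to showing that this Mattila integral converges for every Frostman measure $\mu$ of dimension $s > d/2$ — the classical reformulation of the conjecture. By dyadic pigeonholing in $R$, convergence follows once one establishes a spherical-average decay estimate of the shape $\int_{S^{d-1}} |\hmu(R\omega)|^2\,d\omega \lesim_\eps I_s(\mu)\, R^{\eps - \beta}$ with some exponent $\beta = \beta(s) > d/2$, valid for all $s > d/2$; proving such an estimate is the entire content of the problem.

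Third — the crux — I would attack the spherical-average estimate by wave-packet decomposition and decoupling. Decompose $\hmu$ at frequency scale $R$ into pieces adapted to $R^{-1/2}$-caps of $S^{d-1}$; using the curvature of the sphere (or the associated truncated cone) through $\ell^2$ decoupling, one passes to a sum over dual wave packets, which are tubes of dimensions $R^{1/2}\times\cdots\times R^{1/2}\times R$, with the Frostman condition limiting both the $\mu$-mass on, and the number of, tubes meeting any given $R^{1/2}$-ball. Decoupling alone yields the Du--Zhang exponent $\tfrac d2 + \tfrac14$ (and this paper shaves off an extra $\tfrac1{8d+4}$); to push all the way to $\tfrac d2$ one must prevent the wave packets from concentrating along lower-dimensional sets. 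I would do this by a broad--narrow dichotomy: on the broad part, where an $R^{1/2}$-ball is met by tubes pointing in many well-separated directions, invoke the strongest available Kakeya-maximal and radial-projection bounds together with an induction on scales; on the narrow part, where the relevant tubes lie essentially in a slab around a hyperplane, iterate the whole argument in the lower-dimensional structure and recurse.

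Finally, the obstacle. The endpoint $s = d/2$ forces the Kakeya/restriction input to be deployed exactly at its conjectural threshold, and the extremal "bush" and "focusing" configurations — a family of tubes whose $\mu$-mass concentrates near a point or near a hyperplane — saturate every estimate of this kind that is presently known in dimensions $d \ge 3$ (the planar case is itself open below $5/4$). Ruling such configurations out at the critical exponent seems to demand either a proof of the restriction conjecture for the sphere/cone or a genuinely new incidence-geometric input; closing the remaining gap of roughly $\tfrac14$ is precisely where current technology, including the quantitative refinements obtained in this paper, stops short.
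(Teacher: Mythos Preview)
The statement you are attempting is labeled \emph{Conjecture} in the paper and is not proved there; the paper establishes only the weaker threshold $\frac{d}{2}+\frac14-\frac{1}{8d+4}$ (Theorem~\ref{main}). Your proposal is likewise not a proof: you correctly carry out the standard reduction to the Mattila integral and the spherical-average decay problem, but in your final paragraph you explicitly concede that pushing the exponent down to $d/2$ would require restriction-conjecture-level input or genuinely new incidence geometry. That concession \emph{is} the gap --- you have reformulated the conjecture, not resolved it. There is nothing wrong with the reduction itself, but steps three and four of your plan (``broad--narrow dichotomy plus induction on scales closes the gap'') are a wish list, not an argument; no known combination of Kakeya, radial-projection, and decoupling estimates reaches $\beta(s)>d/2$ for all $s>d/2$.

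As a side remark on methodology: the paper's partial progress does not go through the Mattila-integral route you outline. It uses the pinned-distance framework of Guth--Iosevich--Ou--Wang: one studies the pushforward $d^x_*(\mu_1)$ directly, decomposes $\mu_1$ into wave packets, separates \emph{good} tubes (small $\mu_2$-mass) from \emph{bad} ones, controls the bad part using the radial projection theorem of \cite{KevinRadialProj} together with a delicate ``heavy plate'' removal at many scales, and bounds the good part by refined decoupling (Theorem~\ref{thm: dec}). This buys pinned results and the Hausdorff-dimension statements of Theorem~\ref{thm:distance_hausdorff}, which the spherical-average approach does not naturally give; conversely your formulation makes the remaining obstruction (sharp $L^2$ decay of spherical averages) very transparent. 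Neither route, however, currently reaches $d/2$.
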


The main result in this paper improves the best-known dimensional threshold towards the Falconer conjecture in dimensions $d\geq 3$.

\begin{theorem}\label{main}
Let $d\geq 3$ and $E\subset\mathbb{R}^d$ be a compact set. Then
$$
{\dim_H}(E)> \frac d 2+\frac{1}{4}-\frac{1}{8d+4} \Rightarrow |\Delta(E)|>0.
$$
\end{theorem}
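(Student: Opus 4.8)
The plan is to reduce Theorem~\ref{main} to a weighted $L^2$ restriction estimate and then to prove that estimate with an exponent improving on the one implicit in the Du--Zhang and Du--Iosevich--Ou--Wang--Zhang arguments. Fix $s$ with $\frac d2+\frac14-\frac1{8d+4}<s<\dim_H E$ and, by Frostman's lemma, choose a probability measure $\mu$ on $E$ with $\mu(B(x,r))\lesssim r^s$ for all $x,r$. Following Mattila's framework and Liu's pinned refinement, it suffices to produce a positive-$\mu$-measure set of pins $x$ for which the pushforward $\mu_x=(y\mapsto|x-y|)_*\mu$ is absolutely continuous with an $L^2$ density near $t=1$; via Plancherel this follows from
\[
\int \int_1^2 \Bigl|\int e^{2\pi i t|x-y|}\,\psi(y)\,d\mu(y)\Bigr|^2\,dt\,d\mu(x)\lesssim 1
\]
for suitable smooth cutoffs. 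Expanding the square, integrating in $t$, and performing a Littlewood--Paley decomposition in the variable dual to $t$, the dyadic piece at frequency $\sim R$ is, after parabolic rescaling of the sphere of radius $R$ to the unit sphere, dominated by $R^{-c}$ times a weighted extension integral $\int_{B_R}|E g|^2\,d\mu^{(R)}$, where $\mu^{(R)}$ is $\mu$ rescaled onto $B_R$ and still obeys the Frostman bound $\mu^{(R)}(B(x,r))\lesssim r^s$ for $1\le r\le R$. Summing dyadically, Theorem~\ref{main} reduces to: for every weight $H$ on $B_R\subset\R^d$ with $H(B(x,r))\le r^s$ $(1\le r\le R)$ and every $g$ on $S^{d-1}$,
\begin{equation}\label{eq:weightedrestr}
\int_{B_R}|E g|^2\,H\;\lesssim_\eps\;R^{\gamma(d,s)+\eps}\,\|g\|_{L^2(S^{d-1})}^2,
\end{equation}
with $\gamma(d,s)$ strictly below the previously available value throughout the window $\frac d2-\frac14<s<\frac d2+\frac14$; tracking constants, a gain of order $\frac1{8d+4}$ in $\gamma$ yields exactly the stated improvement in the dimensional threshold.

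To establish \eqref{eq:weightedrestr} I would run a Bourgain--Guth broad--narrow analysis on a ball of radius $R$. In the broad regime, where the value of $Eg$ at a point draws on $\gtrsim d$ transversal caps, I would invoke multilinear restriction (Bennett--Carbery--Tao) together with the weight: transversality forces the relevant $R^{1/2}$-wave packets to be quantitatively spread out, and the Frostman condition on $H$ bounds the $H$-mass that any bush of such tubes can carry, producing a favorable power of $R$. In the narrow regime one passes, via $\ell^2$ decoupling for the sphere (Bourgain--Demeter), to finer caps and iterates; the bookkeeping point is that after $O(\log R)$ steps the accumulated $R^\eps$ losses stay well below the targeted gain.

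The new savings should come from not using $\ell^2$ decoupling as a black box but from a weighted \emph{refined} decoupling, in which one tracks for each cap how many wave packets are genuinely non-negligible and where $H$ concentrates. Concretely I would pigeonhole the wave packets into a ``good'' family, whose tubes meet $\spt H$ with controlled multiplicity and which satisfies \eqref{eq:weightedrestr} with the improved exponent by the refined decoupling input, and a ``bad'' family concentrated on a neighborhood of a set of dimension $<s$, estimated crudely by that small dimension through an induction on scales. Optimizing the split threshold against the multilinear gain is what produces the numerical value $\frac1{8d+4}$.

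The crux — and the step I expect to be hardest — is the weighted refined decoupling of the preceding paragraph: controlling wave-packet interactions against a general Frostman weight without losing the gain requires a Kakeya/incidence-type bound (how many $R^{1/2}$-tubes through a point can carry $H$-mass), and this must be interlocked with the multilinear estimate so that the two bounds balance precisely at the claimed exponent. A secondary technical nuisance is maintaining uniformity in the pin $x$ and the radius $t\in[1,2]$ through the rescaling in the first step, which forces every intermediate estimate to be stated with constants independent of the center and radius of the sphere.
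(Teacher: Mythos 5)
Your proposal follows the Mattila/Liu reduction and a decoupling-flavored analysis, which superficially resembles the paper's framework, but it misses the paper's key new ingredient and locates the expected gain in the wrong place. The paper does \emph{not} prove a new weighted restriction or weighted decoupling estimate, nor does it run a Bourgain--Guth broad--narrow induction. Instead it keeps the refined decoupling input essentially unchanged from Guth--Iosevich--Ou--Wang (Theorem~\ref{thm: dec} here, their Corollary~4.3) and improves the \emph{other} half of the good/bad tube decomposition: the bound on the bad part. Concretely, the paper defines a ``good'' complex measure $\mu_{1,g}^x$ built from wave packets $M_T\mu_1$ whose tubes $T$ have small $\mu_2$-mass ($\lesssim R_j^{-\alpha/2+\eps_0}$) \emph{and} avoid a finite family of heavy $(R_j^{-\kappa},k)$-plates, and proves two separate estimates: Proposition~\ref{mainest1}, $\norm{d_*^x(\mu_1|_{G(x)})-d_*^x(\mu_{1,g}^x)}_{L^1}\le R_0^{-\beta}$, and Proposition~\ref{mainest2}, the $L^2$ bound via the unweighted refined decoupling theorem plus H\"older against $\mu_2$. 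The numerical improvement $\frac{1}{8d+4}$ over Du--Zhang is driven entirely by Proposition~\ref{mainest1}, whose proof hinges on the third author's new radial projection theorem (Theorem~\ref{conj:threshold}): for $k-1<\alpha\le k$, most $\delta$-tubes through a point either carry $\mu_2$-mass $\lesssim\delta^\alpha\,\delta^{-\eps}$ or lie in a heavy plate, which is a \emph{geometric} input replacing the previously available $\delta^{(d-1)/2-\eps}$ (from Orponen's radial projection theorem combined with orthogonal projection to a half-dimensional subspace). Handling those heavy plates rigorously — via the sets $G_j(x)$, the probabilistic ``wiggle'' $a\in[99\Csep,100\Csep]$ that deals with borderline tubes, and the multiscale convergence argument in Lemmas~\ref{lem:refine E_2}--\ref{lem:Ginfty} — is the technical core, and none of it appears in your outline.

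Two points are worth flagging explicitly. First, the direction you sketch — gaining through a genuinely new weighted refined decoupling, tracking how wave packets interact with the Frostman weight — is close in spirit to the companion paper \cite{DOKZFalconerDec} mentioned in Remark~1.5; but as noted there, that route reproduces the present results only for $d\ge4$ and does not recover the $d=3$ case, which is precisely the hardest case of Theorem~\ref{main}. Second, the crucial step you identify as ``hardest'' (a Kakeya/incidence-type bound controlling how many $R^{1/2}$-tubes through a point carry $H$-mass) is exactly where the paper invokes Theorem~\ref{conj:threshold}; you leave this as a black box and do not indicate how to prove it, nor do you anticipate the heavy-plate exceptional structure, which is unavoidable for $\alpha\le d-1$ and is what forces the $G(x)$-dependent construction in the statement of Proposition~\ref{mainest1}. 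As written, the proposal therefore has a genuine gap: the optimization ``against the multilinear gain'' that is supposed to yield $\frac{1}{8d+4}$ is never carried out, and the actual source of the improvement — the sharp $\delta^\alpha$ radial projection bound modulo heavy plates — is not identified.
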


Falconer's conjecture remains open in all dimensions as of today. It has attracted a great amount of attention in the past decades. To name a few landmarks: in 1985, Falconer \cite{falconer1985hausdorff} showed that $|\Delta(E)|>0$ if ${\dim_H}(E)>\frac{d}{2}+\frac{1}{2}$. Bourgain \cite{bourgain1994distance} was the first to lower the threshold $\frac{d}{2}+\frac{1}{2}$ in dimensions $d=2, d=3$ and to use the theory of Fourier restriction in the Falconer problem. The thresholds were further improved by Wolff \cite{wolff1999decay} to $\frac{4}{3}$ in the case $d=2$, and by Erdo\u{g}an \cite{erdogan2005} to $\frac{d}{2}+\frac{1}{3}$ when $d\geq 3$. These records were only very recently rewritten:
\[
\begin{cases}
\frac{5}{4}, &d=2, \quad\qquad\text{(Guth--Iosevich--Ou--Wang \cite{guth2020falconer})}\\
\frac{9}{5}, &d=3, \quad \qquad \text{(Du--Guth--Ou--Wang--Wilson--Zhang \cite{DGOWWZ})}\\
\frac d2+\frac 14+\frac{1}{8d-4}, &d\geq 3, \quad \qquad \text{(Du--Zhang \cite{du2019sharp})}\\
\frac{d}{2}+\frac 14, &d\geq 4 \text{ even}, \quad \text{(Du--Iosevich--Ou--Wang--Zhang \cite{du2021improved})}.
\end{cases}
\]
Our main result in this paper further improves the thresholds in all dimensions $d\geq 3$. Now, the gap between the best-known threshold and the conjectured one is $\frac 14 -\frac{1}{8d+4}$ when $d\geq 3$. This is the first time that the gap gets smaller than $\frac{1}{4}$ and the first time that the gap in higher dimensions is smaller than that in dimension $d=2$.
Similar to \cite{guth2020falconer, du2021improved}, we in fact prove a slightly stronger version of the main theorem regarding the pinned distance set.

\begin{theorem}\label{thm: pinned}
Let $d\geq 3$ and $E\subset\mathbb{R}^d$ be a compact set. Suppose that $\dim_H (E)>\frac{d}{2}+\frac{1}{4}-\frac{1}{8d+4}$. Then there is a point $x\in E$ such that the pinned distance set $\Delta_x(E)$ has positive Lebesgue measure, where
$$
\Delta_x(E):=\{|x-y|:\, y\in E\}.
$$
\end{theorem}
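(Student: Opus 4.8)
The plan is to run the Fourier-analytic approach to pinned distance sets. By Frostman's lemma, fix $s$ with $\tfrac d2+\tfrac14-\tfrac1{8d+4}<s<\dim_H(E)$ together with a Borel probability measure $\mu$ with $\spt\mu\subset E$ and $\mu(B(x,r))\le r^{s}$ for all $x\in\R^{d}$ and $r>0$. It suffices to produce a Borel set $X\subset E$ with $\mu(X)>0$ so that for every $x\in X$ the pinned distance measure $\mu_{x}$ (the push-forward of $\mu$ under $y\mapsto|x-y|$) is absolutely continuous with density in $L^{2}$; then $\Delta_{x}(E)\supset\spt\mu_{x}$ has positive Lebesgue measure.

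By Liu's identity, a weighted average $\int\|\mu_{x}\|_{L^{2}}^{2}\,\omega(x)\,d\mu(x)$ against a suitable cutoff $\omega$ is controlled, up to a harmless term, by a weighted spherical integral of $\widehat{\mu}$. Since $\mu$ can concentrate, a naive average against $\mu$ may diverge, so — following Du--Iosevich--Ou--Wang--Zhang — I would first pass to a positive-measure subset of $E$ on which a radial-projection / non-concentration estimate holds, replacing $\mu$ by a slightly modified measure whose Frostman exponent is still above the target. This reduces everything to one clean decay estimate, uniform over $s$-Frostman measures: for $\lambda\ge 1$,
$$\int_{S^{d-1}}|\widehat{\mu}(\lambda\omega)|^{2}\,d\sigma(\omega)\;\lesssim_{\epsilon}\;\lambda^{\epsilon}\,\lambda^{-\beta(s)},$$
with $\beta(s)$ strictly larger than $d-1$ once $s$ exceeds $\tfrac d2+\tfrac14-\tfrac1{8d+4}$ (in reality one works with $\mu$ mollified at scale $\lambda^{-1}$, which I suppress).

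For the decay estimate I would dualize, recognizing $\|\widehat{\mu}\,|_{\lambda S^{d-1}}\|_{L^{2}}$ as controlled by the sphere extension operator tested against $\mu$; after a parabolic rescaling this becomes a weighted $L^{2}$ restriction estimate $\big\|\mathcal{E}g\big\|_{L^{2}(B_{R},\,d\mu)}$ on a large ball $B_{R}$. One then decomposes into wave packets on $R^{1/2}$-tubes and applies refined $\ell^{2}$-decoupling for the sphere (Du--Zhang) to reduce to counting wave-packet tubes through a typical $R^{1/2}$-ball, the count coming from the Frostman condition at scale $R^{1/2}$. Running this through an induction on scales — splitting into the $k$-broad part (handled by multilinear Kakeya and Guth's broad estimate) and the narrow, lower-dimensional part (fed back into the induction) — is the machinery that in Du--Zhang yields $\beta(s)$ corresponding to the threshold $\tfrac d2+\tfrac14+\tfrac1{8d-4}$.

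The main obstacle — and where the new idea must enter — is gaining the extra $\tfrac1{8d-4}+\tfrac1{8d+4}$. My plan is to sharpen the tube-counting step by not treating all tubes democratically: restrict to "good" tubes, those whose mass sits on portions of $B_{R}$ where $\mu$ is not over-concentrated, and run a bush / two-ends argument on them that exploits the curvature of the sphere together with the fractal dimension of $\mu$ more efficiently than the pointwise Stein--Tomas-type orthogonality used in Du--Zhang, improving precisely the narrow / lower-dimensional contributions where that argument is lossiest. I expect the genuinely hard part to be (i) isolating the correct notion of "good tube" and proving the refined incidence / $L^{2}$ estimate, and (ii) checking that the refinement survives the bookkeeping of the induction on scales and remains compatible with the passage to the good measure and with the weights in Liu's identity; the reductions (Frostman, Liu's formula, dualization, decoupling) are by now standard.
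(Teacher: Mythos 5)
Your high-level framing has the right flavor (Frostman, Liu's identity, a modified ``good'' measure, decoupling, radial projection as a key input), but there are two genuine gaps that the proposal would not survive.

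First, the proposed reduction to a spherical-average decay estimate
$\int_{S^{d-1}}|\widehat{\mu}(\lambda\omega)|^2\,d\sigma(\omega)\lesssim_\epsilon \lambda^\epsilon\lambda^{-\beta(s)}$
with $\beta(s)>d-1$ cannot work as stated: for an $s$-Frostman measure with $s\le d$ the best decay exponent is at most $d-1$ (it equals $d-1$ only for $s=d$), so the numerology is off. More importantly, the paper (following Guth--Iosevich--Ou--Wang and Du--Iosevich--Ou--Wang--Zhang) does \emph{not} funnel everything through a Mattila-type spherical average criterion. It splits $d_*^x\mu_1$ into a good part $d_*^x\mu_{1,g}^x$ and a bad part, proves the bad part is small in $L^1$ (Proposition \ref{mainest1}) and the good part is finite in $L^2$ (Proposition \ref{mainest2}), and then wins by Cauchy--Schwarz \emph{on the good part alone}. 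The good measure $\mu_{1,g}^x$ is a complex-valued measure obtained by deleting wave packets with large $\mu_2$-mass; it is not another Frostman measure, and the $L^2$ estimate is genuinely a statement about this specific measure, gaining from the mass cap $\mu_2(4T)\le R_j^{-\alpha/2+\epsilon_0}$ built into the definition of ``good tube.'' Your mental model of ``replace $\mu$ by a better Frostman measure, then run a uniform decay estimate'' doesn't match the mechanism.

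Second, you propose to win the extra $\sim\tfrac{1}{4d}$ via a sharper bush/two-ends or broad--narrow incidence count in the decoupling step. That is not where the paper's gain lives. The decoupling input (Theorem \ref{thm: dec}, GIOW's refined $\ell^2$ decoupling) and the $L^2$ argument in Section \ref{sec:prop2.2} are essentially verbatim from DIOWZ: there is no broad--narrow induction, no Guth polynomial method, no two-ends reduction. The entire improvement comes from the other side of the good/bad split: a new quantitative radial projection theorem (Theorem \ref{conj:threshold}) that works for measures of dimension $\alpha\le d-1$, allowing the good-tube threshold to be taken as $\mu_2(T)\lesssim r^{\alpha}$ for $r$-tubes rather than the $r^{(d-1)/2}$ forced by Orponen's theorem after the orthogonal-projection trick in odd dimensions. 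The new price that must be paid --- and what occupies most of Section \ref{sec:prop2.1} --- is that this radial projection estimate only holds away from ``heavy plates'' (small $(r^\kappa,k)$-plates of $\mu_1+\mu_2$-mass $\gtrsim r^\eta$); removing those forces the multiscale construction $G_j(x)$, the probabilistic wiggle parameter $a$ to tame borderline tubes, and the auxiliary geometric Lemmas \ref{lem:few_large_plates} and \ref{lem:concentrate_in_r0}. None of that appears in your plan, and without it the bad part cannot be bounded at the stated threshold.
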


We also get a Hausdorff dimension version if $\dim_H(E)$ is lower than the threshold. Let $f(\alpha) = \alpha \cdot \frac{2d+1}{d+1} - (d-1)$. Then the following theorem gives a non-trivial result for $\dim_H (E) \in (\frac{d^2-1}{2d+1}, \frac{d^2+d}{2d+1}]$.
\begin{theorem}\label{thm:distance_hausdorff}
    Let $d\ge 2$, $0 < \alpha \le d-1$, and $E\subset\mathbb{R}^d$ be a compact set. Suppose that $\dim_H (E) = \alpha$. Then for any $\eps > 0$,
    \begin{equation*}
        \dim_H (\{ x \in E : \dim_H (\Delta_x (E)) \le \min(f(\alpha), 1)-\eps\}) < \alpha.
    \end{equation*}
    In particular, $\sup_{x \in E} \dim_H (\Delta_x (E)) \ge \min(f(\alpha), 1)$, and if $\cH^\alpha(E)>0$, then $\dim_H (\Delta_x (E)) \ge \min(f(\alpha), 1)$ for $\cH^\alpha$-almost all $x \in E$.
\end{theorem}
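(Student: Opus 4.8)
The plan is to obtain Theorem~\ref{thm:distance_hausdorff} from a scale-localized, Frostman-exponent-explicit form of the $L^2$ estimate underlying Theorem~\ref{thm: pinned}. For $x\in\R^d$ write $d_x(y)=|x-y|$, and for a finite compactly supported Borel measure $\mu$ on $\R^d$ let $\nu_x^\mu:=(d_x)_\ast\mu$ be the pinned distance measure. The first step is to establish the following: if $\mu(B(z,r))\lesssim r^\alpha$ for all $z,r$ with $0<\alpha\le d-1$, then for every dyadic $\lambda\ge1$ and every $\eps>0$,
\begin{equation*}
\int \|P_\lambda\nu_x^\mu\|_{L^2(\R)}^2\,d\mu(x)\;\lesssim_\eps\;\lambda^{\,d-\frac{2d+1}{d+1}\alpha+\eps},
\end{equation*}
where $P_\lambda$ projects to the frequency band $|t|\sim\lambda$. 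Through the classical reduction of this quantity to spherical averages of $\widehat\mu$, and thence to a weighted $L^2$ restriction/decoupling estimate, this should come out of the same broad--narrow induction and refined decoupling used to prove Theorem~\ref{thm: pinned}, now keeping $\alpha$ as a free parameter rather than specializing to the critical value $\alpha=\tfrac d2+\tfrac14-\tfrac1{8d+4}$, at which the right-hand exponent is exactly $0$. (A good/bad decomposition of $\mu$ as in \cite{du2019sharp,du2021improved} may be needed to reach this clean form, and the constraint $\alpha\le d-1$ reflects the range in which the underlying restriction estimate is sharp.)

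Granting this estimate, the second step converts it to Hausdorff dimension via energy integrals. For $0<\beta<1$, with $I_\beta(\nu)=\iint|s-t|^{-\beta}\,d\nu(s)\,d\nu(t)$, one has $I_\beta(\nu)\lesssim_\beta \|\nu\|^2+\sum_{\lambda}\lambda^{\beta-1}\|P_\lambda\nu\|_{L^2}^2$ (dyadic sum over $\lambda\ge1$). Integrating in $x$ and inserting the previous estimate,
\begin{equation*}
\int I_\beta(\nu_x^\mu)\,d\mu(x)\;\lesssim_\eps\;1+\sum_{\lambda\ge1}\lambda^{\,\beta-1+d-\frac{2d+1}{d+1}\alpha+\eps}\;=\;1+\sum_{\lambda\ge1}\lambda^{\,\beta-f(\alpha)+\eps},
\end{equation*}
which is finite once $\beta<f(\alpha)-\eps$. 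Taking $\eps$ small: for any $\mu$ with $\mu(B(z,r))\lesssim r^\alpha$ and any $\beta<\min(f(\alpha),1)$, the energy $I_\beta(\nu_x^\mu)$ is finite for $\mu$-a.e.\ $x$; since $\Delta_x(E)$ is compact and $\nu_x^\mu$ is a nonzero measure supported in it, the standard relation between energy and dimension gives $\dim_H\Delta_x(E)\ge\dim_H(\spt\nu_x^\mu)\ge\beta$ for $\mu$-a.e.\ $x$.

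The third step is the exceptional-set argument. Fix $\eps>0$ and set $A_\eps:=\{x\in E:\dim_H\Delta_x(E)\le\min(f(\alpha),1)-\eps\}$ (a Borel set). Suppose $\dim_H A_\eps\ge\alpha$. Pick $\alpha_0<\alpha$ close enough to $\alpha$ that $\min(f(\alpha_0),1)>\min(f(\alpha),1)-\eps/2$; since $\dim_H A_\eps>\alpha_0$, Frostman's lemma gives a nonzero measure $\mu$ supported on a compact subset of $A_\eps$ with $\mu(B(z,r))\lesssim r^{\alpha_0}$. Applying Step~2 to $\mu$ with any $\beta\in\bigl(\min(f(\alpha),1)-\eps/2,\ \min(f(\alpha_0),1)\bigr)$ yields a point $x\in\spt\mu\subseteq A_\eps$ with $\dim_H\Delta_x(E)\ge\beta>\min(f(\alpha),1)-\eps$, contradicting $x\in A_\eps$. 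Hence $\dim_H A_\eps<\alpha$. The two ``in particular'' assertions follow: if $\cH^\alpha(E)>0$ then $\cH^\alpha(\bigcup_nA_{1/n})=0<\cH^\alpha(E)$, so $E\setminus\bigcup_nA_{1/n}$ carries full $\cH^\alpha$-measure and $\dim_H\Delta_x(E)\ge\min(f(\alpha),1)$ for $\cH^\alpha$-a.e.\ $x\in E$; for general compact $E$ with $\dim_H E=\alpha$, apply this to compact $E'\subseteq E$ with $0<\cH^{\alpha'}(E')<\infty$ and let $\alpha'\uparrow\alpha$, using $\Delta_x(E')\subseteq\Delta_x(E)$ and continuity of $f$.

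The main obstacle is Step~1 --- proving the displayed $L^2$ bound with the sharp exponent $d-\frac{2d+1}{d+1}\alpha$, i.e.\ carrying the Frostman exponent $\alpha$ as an explicit parameter through the entire broad--narrow and refined-decoupling machinery behind Theorem~\ref{thm: pinned} instead of only through its critical value. Steps~2 and~3 are essentially soft once that estimate --- which should coincide with the technical heart of the paper --- is in hand.
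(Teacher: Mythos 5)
Your plan rests entirely on Step~1, the clean scale-localized $L^2$ bound
\begin{equation*}
\int \|P_\lambda (d^x_\ast\mu)\|_{L^2}^2\,d\mu(x)\lesssim_\eps \lambda^{\,d-\frac{2d+1}{d+1}\alpha+\eps},
\end{equation*}
and this is precisely where the argument breaks. The refined decoupling estimate in the paper (Lemma~\ref{lem:big_r}, which translated through Liu's identity gives exactly your exponent $d-\frac{2d+1}{d+1}\alpha$) is proved \emph{only} for the good part $\mu_{1,g}^x$, not for the full pushforward $d^x_\ast\mu$. Indeed, no such bound can hold for the full measure in general: heavy $R^{-1/2+\beta}$-tubes through $x$ — tubes carrying $\mu$-mass much larger than $R^{-\alpha/2}$, which an $\alpha$-Frostman measure can certainly have — make $\mu*\widehat{\sigma_r}(x)$ and hence $P_\lambda(d^x_\ast\mu)$ large in an uncontrolled way. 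The whole good/bad decomposition exists to excise these, and once it is in place the estimate applies only to the complex-valued measure $\mu_{1,g}^x$. You flag in passing that ``a good/bad decomposition may be needed to reach this clean form,'' but the point is the opposite: after the decomposition there is \emph{no} clean form for the full $d^x_\ast\mu$, only for the good part together with an $L^1$-error (Proposition~\ref{mainest1}).

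This is why your Step~2 also cannot be run as stated. An $L^1$-smallness of $d^x_\ast(\mu|_{G(x)})-d^x_\ast(\mu_{1,g}^x)$ does not transfer to energy bounds ($L^1$ control on $\widehat\nu$ is useless against the divergent weight $|\xi|^{\beta-1}$), and the good measure, being complex-valued, has no meaningful $\beta$-energy of its own. The paper sidesteps energy integrals entirely: it proves a discretized non-concentration estimate (Proposition~\ref{prop:non-concentration}) by combining Proposition~\ref{mainest1} (the $L^1$-comparison), Proposition~\ref{mainest2} (the $L^2$ bound on the good part) via Cauchy–Schwarz against the small union of intervals, and then applies Liu's criterion (Lemma~\ref{strategy}) to pass from non-concentration to Hausdorff dimension. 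In addition, the $L^1$-comparison costs you a small $(R_0^{-\beta},k)$-plate $W(x)$ around $x$, whose $\mu_1$-mass is not a priori small at the Hausdorff-dimension level; the paper controls $\mu_1(K(x)\cap W(x))$ with Shmerkin's plate theorem (Theorem~\ref{thm: shmerkin}) after a separate case analysis for when $\mu_1$ charges a $k$-plane. None of this machinery appears in your proposal, and it is essential to close the argument. Your Step~3 exceptional-set Frostman argument, by contrast, is sound and matches the paper's reduction of the first claim of Theorem~\ref{thm:distance_hausdorff} to the ``$\sup$'' statement (Theorem~\ref{thm:distance_hausdorff'}).
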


In dimension $d=2$, a lot of recent progress has been made. Liu \cite{liu2020hausdorff} showed that if $\dim_H (E) = \alpha>1$, then $\sup_{x \in E} \dim_H (\Delta_x (E)) \ge \min(\frac{4\alpha}{3}-\frac 23, 1)$. For small values of $\alpha>1$, Shmerkin \cite{shmerkin2020dimensions} improved this bound by using entropy, and later, Stull \cite{stull2022pinned} (see also \cite{fiedler2023dimension}) made a further improvement, he showed using algorithmic complexity theory that  $\dim_H (\Delta_x (E)) \ge \frac{\alpha}{4}+\frac 12$ for many $x$. Theorem \ref{thm:distance_hausdorff} deals with the case $\dim_H (E) \leq 1$.

When $d\geq 3, \dim_H (E) = \alpha > \frac d2$, the previous best-known results are as follows: Harris \cite{harris2021low} proved that $\sup_{x \in E} \dim_H (\Delta_x (E)) \geq \min(\frac{2d-1}{d} \alpha - (d-1), 1)$; for even dimensions $d\geq 4$, Wang--Zheng \cite{wang2022improvement} improved this result to $\sup_{x \in E} \dim_H (\Delta_x (E)) \geq \min(\frac{2d}{d+1} \alpha - \frac{2d^2 - d - 2}{2(d+1)}, 1)$. Theorem \ref{thm:distance_hausdorff} further improves upon these two results.

However, getting a lower bound for $\dim_H (\Delta_x (E))$ given $\dim_H (E) = \frac{d}{2}$ is much more challenging. Falconer in \cite{falconer1985hausdorff} proved that $\dim_H (\Delta(E)) \ge \frac{1}{2}$, and a pinned version $\sup_{x \in E} \dim_H (\Delta_x (E)) \ge \frac{1}{2}$ was proved in \cite{oberlin2014spherical}. The reason $\frac{1}{2}$ is a natural barrier is that if there existed a $\frac{1}{2}$-dimensional ring $R \subset [1, 2]$, then the distance set of $R^d$ is contained in $\sqrt{R}$, so it has Hausdorff dimension $\frac{1}{2}$. Even though the works of Bourgain \cite{bourgain2003erdos} and Katz--Tao \cite{katz2001some} showed a quantitative discretized sum-product theorem that rules out the existence of $R$, it is still a challenging problem to obtain explicit bounds for the discretized sum-product problem, which asks for the largest exponent $\gamma > 0$ such that $\min(|A + A|_\delta, |A \cdot A|_\delta) \gesim |A|^{1+\gamma}$ for any Katz--Tao $(\delta, \frac{1}{2})$-set $A \subset \R^1$ (see \cite{katz2001some} for the definition of such sets). See \cite{guth2021discretized}, \cite{orponen2023projections}, \cite{mathe2023discretised}, and \cite{ren2023furstenberg} for the best known bounds for the discretized sum-product problem.

For the Falconer distance set problem, the only explicit improvements known over $\frac{1}{2}$ for $\dim_H (\Delta(E))$ when $\dim_H (E) = \frac{d}{2}$ were derived in \cite{shmerkin2022non} and \cite{shmerkin2021distance} for $d = 2, d=3$, and the latter paper also proved box dimension results for $d \ge 4$. In personal communication, Shmerkin--Wang extended Stull's bound in $d = 2$ to the case $\dim_H (E) = 1$. Furthermore, by plugging in the sharp radial projection estimates of \cite{orponen2022kaufman} into the proofs of Theorems 1.2 and 1.3 of \cite{shmerkin2021distance}, we get improved bounds over those stated in these theorems for $d \ge 3$. In summary, the previously known results are:
\begin{itemize}
    \item if $d = 2$, $\dim_H (E) = 1$, then $\sup_{x \in E} \dim_H (\Delta_x (E)) \ge \frac{3}{4}$;

    \item if $d = 3$, $\dim_H (E) = \frac{3}{2}$, then $\sup_{x \in E} \dim_H (\Delta_x (E)) \ge \frac{5}{8}$;

    \item if $d \ge 4$, $\dim_H (E) = \frac{d}{2}$, then $\sup_{x \in E} \dim_B (\Delta_x (E)) \ge \frac{d+2}{2(d+1)}$.
\end{itemize}
A key obstruction to Hausdorff dimension estimates in \cite{shmerkin2021distance} when $d \ge 4$ is that $\frac{d}{2}$-dimensional measures don't necessarily have decay around small neighborhoods of $(d-1)$-planes when $d \ge 4$, so one cannot apply Bourgain's discretized projection theorem to such measures.

Note that in Theorem \ref{thm:distance_hausdorff}, $f(\frac{d}{2}) = \frac{d+2}{2(d+1)} > \frac{1}{2}$. This matches the best-known bound $\frac 58$ when $d = 3$ using an entirely different approach. And when $d\geq 4$, this is the first time one obtains an explicit improved bound over $\frac 12$ for the Hausdorff dimension of pinned distance sets of a $\frac{d}{2}$-dimensional set, (and this matches the box dimension bound as in the above).

Finally, let us remark what is known when $\alpha < \frac{d}{2}$. Falconer \cite{falconer1985hausdorff} proved that $\dim_H (\Delta(E)) \ge \min(\alpha - \frac{d-1}{2}, 0)$; this was improved in dimensions $d = 2$ and $3$ to $\sup_{x \in E} \dim_H (\Delta_x (E)) \ge \frac{\alpha + 1}{d + 1}$ for $\alpha \in (\frac{d-1}{2}, \frac{d}{2})$ by \cite{orponen2022kaufman}, \cite{shmerkin2021distance}. In dimension $d \ge 4$ however, their approach only recovers box dimension estimates. Our bound $\sup_{x \in E} \dim_H (\Delta_x (E)) \ge f(\alpha)$ from Theorem \ref{thm:distance_hausdorff} is weaker than $\frac{\alpha + 1}{d + 1}$ for all $\alpha < \frac{d}{2}$, but it works for Hausdorff dimension for all $d \ge 3$.

\subsection*{Old and new ideas}

We will extend and refine the good tube/bad tube and decoupling method pioneered by \cite{guth2020falconer} for dimension $d = 2$ and continued in \cite{du2021improved} for even dimensions $d$. To state our contributions, we find it helpful to first recall the good tube/bad tube decomposition in \cite{guth2020falconer}, \cite{du2021improved}. Fix $\alpha > \frac{d}{2}$ and a set $E$ with $\dim_H (E) > \alpha$; thus, we can find two $\alpha$-dimensional Frostman measures $\mu_1, \mu_2$ supported on separated subsets of $E$. Then we fix a single scale $r$, and consider the set of $r$-tubes (i.e. $r$-neighborhoods of lines) that intersect $E$. We say that an $r$-tube $T$ is good if $\mu_2 (T) \lesim r^{\frac{d}{2}-\eps}$, and bad otherwise. The paper \cite{guth2020falconer} developed a decoupling framework that can handle good tubes very well, so it remains to show that there are very few bad tubes. When $d = 2$, this was solved by appealing to the $n=2$ case of Orponen's radial projection theorem, which roughly states that given a Frostman measure $\mu$ in $\R^n$ with dimension $> n-1$, most tubes satisfy $\mu_2 (T) \lesim r^{n-1}$. When $d > 2$ is even, we can no longer directly apply the radial projection theorem because $\alpha$ is usually $\le d-1$; to overcome this \cite{du2021improved} came up with a partial fix by first projecting $\mu_2$ onto a generic $(\frac{d}{2}+1)$-dimensional subspace of $\R^d$ (assuming $d$ is even), so the projected measure $\mu_{2*}$ still remains $\alpha$-dimensional, and then applying Orponen's radial projection theorem to $\mu_{2*}$ in $\R^{\frac{d}{2}+1}$. Despite the seemingly wasteful application of the initial orthogonal projection, the threshold $r^{\frac{d}{2}-\eps}$ is sharp up to $\eps$: the example of a $(\frac{d}{2}+1)$-hyperplane shows that $r^{\frac{d}{2}}$ is the best threshold one can get in general (and we get an even worse threshold $r^{\frac{d-1}{2}}$ for odd dimensions via an $(\frac{d+1}{2})$-hyperplane example; see e.g. Section 5.2 of \cite{du2021improved}).

    To improve upon \cite{du2021improved}, we must find a way to deal with this hyperplane example. As it so happens, we are elated if this situation occurs: if our set $E$ is contained in a lower-dimensional hyperplane $H = \R^n$, then we can simply work in $\R^n$ where existing distance set results are already better than what we need. Thus, it would be nice to establish a dichotomy: either we can improve upon the barrier $r^{\lfloor \frac{d}{2} \rfloor}$, or it turns out that much of $E$ is contained in a lower-dimensional hyperplane. Fortunately, this wish is indeed true and is one of our main innovations of the paper. More precisely, we establish a framework, for the first time in the context of the distance problems, that analyzes how much a fractal measure concentrates near a low dimensional plane hence allows us to reduce the problem to the aforementioned dichotomy. We show that either (1) the measure is dictated by wave packets (i.e. microlocalized pieces of the measure) that display good transversality. In this case, desirable multilinear estimates or projection results can apply to deduce an improved threshold $r^{\alpha-\epsilon}$ (a significant improvement over the barrier $r^{\lfloor \frac{d}{2} \rfloor}$ and would lead to gains in the decoupling step), or (2) the problem can be reduced to lower dimensions hence existing distance set results can apply. 
    
On the technical level, this is achieved by a careful analysis of the interaction between the plates of intermediate dimensions (i.e. neighborhood of planes) and wave packets. Our starting point is a new radial projection theorem of the third author \cite{KevinRadialProj}, which we precisely state as Theorem \ref{conj:threshold}. Roughly, it states that most $r$-tubes either have $\mu_2$-mass $\lesim r^{\alpha-\eps}$ or lie in some heavy $(r^\kappa, \lceil \alpha \rceil)$-plate, which is the $r^{\kappa}$-neighborhood of a $\lceil \alpha \rceil$-dimensional hyperplane with $(\mu_1 + \mu_2)$-mass $\gesim r^{\eta}$ (here, $\eta \ll \kappa \ll \eps$). The main obstacle that we need to deal with then becomes the heavy $(r^\kappa, \lceil \alpha \rceil)$-plates, and a key novelty of this paper is an upgrade of the good/bad framework that analyzes the interaction between these plates and the wave packets.

Note that this strategy in fact indicates a key difference between the distance problem in two dimensions and that in higher dimensions: in dimension three and higher, it is possible to study the distance set using information from the distance problem in lower dimensions. This is apparently not possible in the plane, in which case the measure is assumed to have Hausdorff dimension greater than one hence cannot concentrate on a line. As far as we are aware of, it seems that such a special feature of the distance problem in the higher dimensional setting has not been explored in the literature before.

Here are some more details, where we fix $d = 3$ for concreteness. Following the setup in \cite{du2021improved}, we let $\mu_1, \mu_2$ be $\alpha$-dimensional Frostman measures supported on subsets $E_1, E_2 \subset E$ with $\text{dist}(E_1, E_2) \gesim 1$. Let $R_0$ be a large number and define the scales $R_j = 2^j R_0$. Fix a scale $j$; we shall work with $R_j^{-1/2+\beta}$-tubes. Let $G(x)$ to be the set of $y \in E_1$ such that $x, y$ don't both lie in some heavy $(R_j^{-\kappa}, 2)$-plate. 
Define a good $R_j^{-1/2+\beta}$-tube to be one with mass $\lesim R_j^{-\alpha/2+\eps}$ and doesn't lie in any heavy plate, and $\mu_{1,g}$ to be a part of $\mu_1$ from all good tubes. With these definitions, we can try to follow the framework of \cite{du2021improved} and show that $\mu_1|_{G(x)}$ and $\mu_{1,g}$ are close in $L^1$-norm. There are two subtleties with this approach. First, the error in $\norm{\mu_1|_{G(x)} - \mu_{1,g}}_{L^1}$ may contain contributions from tubes $T$ at the border of $G(x)$, i.e. $T \subset G(x)$ but $2T \not\subset G(x)$. To overcome this issue, we introduce a probabilistic wiggle. If we work with heavy $(aR_j^{-\kappa}, 2)$-plates for a uniformly random $a \in [1, 2]$, then the borderline error will be small on average, so there exists a choice for $a$ to make the borderline error small.

The second issue is that we do not have the luxury of working at a single scale, so we need to introduce some ideas from \cite[Appendix B]{shmerkin2022non}. Specifically, we will construct a decreasing sequence $\R^3 \supset G_0 (x) \supset G_1 (x) \supset \cdots$ such that $\R^3 \setminus G_0 (x)$ is contained in a $(R_0^{-\kappa/2}, 2)$-plate, $\mu(G_0 (x) \setminus G_\infty (x)) \le R_0^{-\kappa/2}$, and $G_j (x)$ is disjoint from all heavy $(aR_j^{-\kappa},2)$-plates containing $x$ (where $a \in [1, 2]$ is the probabilistic wiggle). Since all the $G_j$'s are close in measure, we can combine multiscale information to show that $\mu_1|_{G_0 (x)}$ and $\mu_{1,g}$ are close in $L^1$-norm.

The final step is to show $G_0 (x)$ is large. This is equivalent to showing that any $(R_0^{-\kappa/2}, 2)$-plate has small ($\le \frac{1}{2}$) measure for $R_0$ large enough. If this is not true, then by compactness we can find a hyperplane with nonzero measure, and then we reduce to Falconer in 2D. In this case, we have strong bounds for distance sets (e.g. see Wolff \cite{wolff1999decay}) because $\dim_H (E) > 1.5$ is large.

\begin{remark}
    In dimensions $d \ge 4$, a slightly simpler and more intuitive choice of good measures exists. We refer the interested reader to  \cite{DOKZFalconerDec} for more details, where such a direction is pursued and same results in Theorems \ref{main} and \ref{thm: pinned} (except for the $d=3$ case) are obtained as a consequence of new weighted refined decoupling estimates. In dimension $d=3$ though, the strategy still works but doesn't seem to yield as good a result as the current paper.
\end{remark}

    We point out that, in some sense, the dimensional threshold we obtained already fully exploited the power of the methods of this paper and the companion paper \cite{DOKZFalconerDec}. The radial projection theorem of \cite{KevinRadialProj} is sharp in the following sense: let $E_1, E_2$ each be unions of $r^{-\alpha}$ many $r$-balls satisfying an $\alpha$-dimensional spacing condition such that $\dist (E_1, E_2) \ge \frac{1}{2}$, and let $\mu_1, \mu_2$ be probability measures supported on $E_1, E_2$ respectively such that each $r$-ball in $E_i$ has $\mu_i$-measure $r^\alpha$, $i = 1, 2$. Then, we know that many $r$-tubes can intersect at least one $r$-ball in $E_1$ and one $r$-ball in $E_2$. Thus, many $r$-tubes can have both $\mu_1$ and $\mu_2$-mass at least $r^\alpha$, so $r^\alpha$ is the best possible threshold for the good tubes in this paper. To make further progress, we suggest to look at improving the decoupling framework.

    On the other hand, the threshold $r^\alpha$ is already stronger than all the previously best known thresholds for good tubes in all dimensions $d\geq 2$ (see \cite{guth2020falconer, du2021improved}). For comparison, in two dimensions, it was shown that one can reduce to good $r$-tubes whose measure is at most $\sim r$, which is weaker than $r^\alpha$. The possibility of obtaining such a better threshold for the good tubes underscores another key difference between the distance problem in higher dimensions and that in the plane. Indeed, one can easily see that for an evenly distributed probablity measure in the plane, the threshold $r$ would be the best possible.

\subsection*{Outline of the paper}

In Section \ref{sec-mainest}, we outline two main estimates and prove Theorem \ref{thm: pinned} using them. In Section \ref{sec-radial}, we list several results that we will use to bound the bad part, including the new radial projection estimate by the third author \cite{KevinRadialProj} and two geometric lemmas governing the physical location of small plates with large mass. In Section \ref{sec:prop2.1}, we construct the good measure and prove the first main estimate - Proposition \ref{mainest1}. In Section \ref{sec:prop2.2}, we prove the second main estimate - Proposition \ref{mainest2} using refined decoupling. In Section \ref{sec:hausdorff}, we prove Theorem \ref{thm:distance_hausdorff} using the two main estimates and a framework of Liu \cite{liu2020hausdorff}. In Section \ref{sec:othernorms}, we give some remarks about the extension of Theorem \ref{thm: pinned} to more general norms and its connection with the Erd\H os distance problem.
\vspace{.1in}

\subsection*{Notations}
Throughout the article, we write $A\lesssim B$ if $A\leq CB$ for some absolute constant $C$; $A\sim B$ if $A\lesssim B$ and $B\lesssim A$; $A\lesssim_\eps B$ if $A\leq C_\eps B$; $A\lessapprox B$ if $A\leq C_\eps R^\eps B$ for any $\eps>0$, $R>1$.

For a large parameter $R$, ${\rm RapDec}(R)$ denotes those quantities that are bounded by a huge (absolute) negative power of $R$, i.e. ${\rm RapDec}(R) \leq C_N R^{-N}$ for arbitrarily large $N>0$. Such quantities are negligible in our argument.

For $x\in \R^d$ and $t>0$, $B(x, t)$ is the ball in $\R^d$ of radius $t$ centered at $x$. A $\delta$-tube is the intersection of an infinite cylinder of radius $\delta$ and $B(0, 10)$. This is not standard (usually, a tube is defined to be a finite $\delta$-cylinder with length $1$), but this definition won't cause problems and is slightly more convenient for us.

For a set $E\subset \R^d $, let $E^c = \R^d \setminus E$, and $E^{(\delta)}$ the $\delta$-neighborhood of $E$.
For subsets $E_1, E_2 \subset \R^d$, $\dist (E_1,E_2)$ is their Euclidean distance.

For $A \subset X \times Y$ and $x \in X$, define the slice $A|_x = \{ y \in Y : (x, y) \in A \}$. Similar definition for $A|_y$, when $y\in Y$.

For a measure $\mu$ and a measurable set $G$, define the restricted measure $\mu|_G$ by $\mu|_G (A) = \mu(G \cap A)$ for all measurable $A\subset \R^d$. 

We say a measure $\mu$ supported in $\R^d$ is an $\alpha$-dimensional measure with constant $C_\mu$ if it is a probability measure satisfying that
\begin{equation*}
\mu(B(x,t)) \leq C_\mu t^\alpha,\qquad \forall x\in \mathbb{R}^d,\, \forall t>0.
\end{equation*}

An $(r,k)$-plate $H$ in $\R^d$ is the $r$-neighborhood of a $k$-dimensional affine plane in the ball $B^d(0,10)$. More precisely, 
$$
H:=\{z\in B(0,10): \dist(z, P_H) < r\},
$$
where $P_H$ is a $k$-dimensional affine plane, which is called the central plane of $H$. We can also write $H= P_H^{(r)} \cap B(0, 10)$. A $C$-scaling of $H$ is 
$$
CH=\{z\in B(0,10): \dist(z, P_H) < Cr\}.
$$
Denote the surface of $H$ by
$$
\surf(H) := \{z\in B(0, 10): \dist(z, P_H) = r\}\,.
$$
Since a $\delta$-tube is a $(\delta, 1)$-plate, the same conventions apply to tubes.

To prevent the reader from feeling too attached to $B(0, 10)$, we will see in the proofs that $B(0, 10)$ can be replaced by any smooth bounded convex body that contains $B(0, 10)$.

We say that an $(r,k)$-plate $H$ is $\gamma$-concentrated on $\mu$ if $\mu(H) \ge \gamma$.

Given a collection $\cH$ of plates in $\R^d$ and a point $x\in \R^d$, define $\cH(x):=\{H\in\cH:x\in aH\}$,  where $a$ is a ``master parameter'' that will be defined later.

We will work with a collection $\cE_{r,k}$ of essentially distinct $(r,k)$-plates with the following properties:
\begin{itemize}
    \item Each $(\frac{r}{2}, k)$-plate intersecting $B(0, 1)$ lies in at least one plate of $\cE_{r,k}$;

    \item For $s \ge r$, every $(s, k)$-plate contains $\lesim_{k,d} \left( \frac{s}{r} \right)^{(k+1)(d-k)}$ many $(r, k)$-plates of $\cE_{r,k}$.
\end{itemize}
For example, when $k = 1$ and $d = 2$, we can simply pick $\sim r^{-1}$ many $r$-tubes in each of an $r$-net of directions. This generalizes to higher $k$ and $d$ via a standard $r$-net argument, which can be found in \cite[Section 2.2]{KevinRadialProj}.

\begin{acknowledgement}
XD is supported by NSF DMS-2107729 (transferred from DMS-1856475), NSF DMS-2237349 and Sloan Research Fellowship. YO is supported by NSF DMS-2142221 and NSF DMS-2055008. KR is supported by a NSF GRFP fellowship. RZ is supported by NSF DMS-2207281 (transferred from DMS-1856541),  NSF DMS-2143989 and the Sloan Research Fellowship.
\end{acknowledgement}

\section{Main estimates} \label{sec-mainest}
\setcounter{equation}0
In this section, we outline two main estimates, from which Theorems \ref{thm: pinned} and \ref{thm:distance_hausdorff} follow.

Let $E\subset \mathbb{R}^d$ be a compact set with positive $\alpha$-dimensional Hausdorff measure. Without loss of generality, assume that $E$ is contained in the unit ball, and there are subsets $E_1$ and $E_2$ of $E$, each with positive $\alpha$-dimensional Hausdorff measure, 
and $\dist(E_1, E_2)\gtrsim 1$. Then there exist $\alpha$-dimensional probability measures $\mu_1$ and $\mu_2$ supported on $E_1$ and $E_2$, respectively.

To relate the measures to the distance set, we consider their pushforward measures under the distance map. For a fixed point $x\in E_2$, let $d^x:E_1\to \R$ be the pinned distance map given by $d^x(y):=|x-y|$. Then, the pushforward measure $d^x_\ast(\mu_1)$, defined as
\[
\int_{\mathbb{R}} \psi(t)\,d^x_\ast(\mu_1)(t)=\int_{E_1}\psi(|x-y|)\,d\mu_1(y),
\]
is a natural measure that is supported on $\Delta_x(E_1)$.

In the following, we will construct another complex-valued measure $\mu_{1,g}^x$ that is the \emph{good} part of $\mu_1$ with respect to $\mu_2$ depending on $x$, and study its pushforward under the map $d^x$. The main estimates are the following.

\begin{prop} \label{mainest1} Let $d\geq 2$, $k \in \{ 1, 2, \cdots, d-1 \}$, $k-1<\alpha\leq k$, and $\eps > 0$. Then there exists a small $\beta (\eps) > 0$ such that the following holds for sufficiently large $R_0 (\beta, \eps)$. Assume $\mu_{1,g}^x$ has been constructed following the procedure in Section \ref{sec: good} below. Then there is a subset $E_2' \subset E_2$ with $\mu_2(E_2') \ge 1 - R_0^{-\beta}$ and for each $x \in E_2'$, there exists a set $G(x) \subset B^d(0,10)$ where $B^d(0,10) \setminus G(x)$ is contained within some $(R_0^{-\beta}, k)$-plate, such that the following estimate holds:
\begin{equation} \label{eqn:mu1good1}
        \norm{d_*^x (\mu_1|_{G(x)}) - d_*^x (\mu_{1,g}^x)}_{L^1} \le R_0^{-\beta}.
\end{equation}
\end{prop}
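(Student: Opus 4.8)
The plan is to reduce everything to a total-variation estimate for the measures themselves, and then control the discrepancy scale by scale, using the new radial projection theorem for the $\mu_2$-heavy tubes and a probabilistic wiggle in the plate thickness for the borderline tubes. Since $d_*^x$ is a pushforward, $\norm{d_*^x(\mu_1|_{G(x)})-d_*^x(\mu_{1,g}^x)}_{L^1}\le\norm{\mu_1|_{G(x)}-\mu_{1,g}^x}$ in total variation, so it suffices to produce $E_2'$ with $\mu_2(E_2')\ge 1-R_0^{-\beta}$ such that $\norm{\mu_1|_{G_0(x)}-\mu_{1,g}^x}\le R_0^{-\beta}$ for $x\in E_2'$, where $G(x):=G_0(x)$. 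Recall from Section~\ref{sec: good} the multiscale data: a chain $B^d(0,10)\supset G_0(x)\supset G_1(x)\supset\cdots$ with $B^d(0,10)\setminus G_0(x)$ inside a single $(R_0^{-\beta},k)$-plate and $G_j(x)$ the set avoiding every heavy $(aR_i^{-\kappa},k)$-plate through $x$ with $i\le j$; and approximations $\mu_{1,g}^{x,\le 0},\mu_{1,g}^{x,\le 1},\dots$ with $\mu_{1,g}^x:=\lim_j\mu_{1,g}^{x,\le j}$, where $\mu_{1,g}^{x,\le j}$ arises from $\mu_{1,g}^{x,\le j-1}$ by a wave-packet decomposition at scale $R_j^{-1/2+\beta}$, keeping the \emph{good} tubes and discarding the \emph{bad} ones: tubes with $\mu_2$-mass $\gesim R_j^{-\alpha/2+\eps}$, tubes lying in a heavy $(aR_j^{-\kappa},k)$-plate, and tubes straddling the plates removed at scale $j$. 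The construction is arranged so that, modulo a $\RapDec(R_0)$ error, $\mu_1|_{G_0(x)}-\mu_{1,g}^x$ is dominated by $\mu_1$ on the union over all scales of the bad tubes inside $G_0(x)$; splitting by the three kinds of badness,
\[
\norm{\mu_1|_{G_0(x)}-\mu_{1,g}^x}\lesim \mathrm{Rad}(x)+\mathrm{Plate}(x)+\mathrm{Bord}(a,x)+\RapDec(R_0),
\]
where $\mathrm{Rad}(x)$ is the sum over $j$ of the $\mu_1$-mass of scale-$j$ tubes through $x$ that are $\mu_2$-heavy and lie in no heavy plate, $\mathrm{Plate}(x)=\mu_1(G_0(x)\setminus G_\infty(x))$, and $\mathrm{Bord}(a,x)$ is the sum over $j$ of the $\mu_1$-mass of scale-$j$ tubes straddling the plates removed at scale $j$.

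It remains to make each of $\mathrm{Rad},\mathrm{Plate},\mathrm{Bord}$ at most $R_0^{-\beta}$ off a $\mu_2$-set of measure $\le R_0^{-\beta}$. For $\mathrm{Rad}$: apply the radial projection estimate Theorem~\ref{conj:threshold} to $\mu_1,\mu_2$ at each scale $r=R_j^{-1/2+\beta}$; it gives $\int_{E_2}\mu_1(\{y:\text{the scale-}j\text{ tube through }x,y\text{ is }\mu_2\text{-heavy and in no heavy }(aR_j^{-\kappa},k)\text{-plate}\})\,d\mu_2(x)\lesim R_j^{-c\eps}$, and summing the geometric series in $j$ and invoking Chebyshev bounds $\mathrm{Rad}(x)$ by $R_0^{-\beta}$ off a $\mu_2$-set of measure $\le R_0^{-\beta}$, provided $\beta$ is small relative to $\eps$. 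For $\mathrm{Plate}$: this is taken as an input from Section~\ref{sec: good}, established there via the two geometric lemmas of Section~\ref{sec-radial} and a dichotomy built into the procedure — in the degenerate case that some $(R_0^{-\beta},k)$-plate is too heavy one restricts to that plate and reduces to a lower-dimensional Falconer problem, handled by prior results because $\alpha>k-1$ — which together yield $\mu_1(G_0(x)\setminus G_\infty(x))\le R_0^{-\beta}$ for all $x$ outside a $\mu_2$-set of measure $\le R_0^{-\beta}$.

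The term $\mathrm{Bord}$ is where the probabilistic wiggle is needed. For fixed $j$, as $a$ ranges uniformly over $[1,2]$ the surfaces $\surf(aH)$ of the heavy $(aR_j^{-\kappa},k)$-plates sweep out an annulus of width $\sim R_j^{-\kappa}$, far larger than the tube width $R_j^{-1/2+\beta}$, so a fixed scale-$j$ tube straddles $\surf(aH)$ only for an $a$-set of measure $\lesim R_j^{-(1/2-\beta-\kappa)}$. Hence $\mathbb{E}_a\int_{E_2}\mathrm{Bord}(a,x)\,d\mu_2(x)\lesim\sum_{j\ge 0}R_j^{-(1/2-\beta-\kappa)}\lesim R_0^{-(1/2-\beta-\kappa)}$. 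Choosing one value $a=a_0\in[1,2]$ — the master parameter, fixed once and for all — with $\int_{E_2}\mathrm{Bord}(a_0,x)\,d\mu_2(x)$ at most twice this expectation, and applying Chebyshev, gives $\mathrm{Bord}(a_0,x)\le R_0^{-\beta}$ off a $\mu_2$-set of measure $\le R_0^{-\beta}$.

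Taking $E_2'$ to be $E_2$ with these three exceptional sets removed, $\mu_2(E_2')\ge 1-O(R_0^{-\beta})$, and for $x\in E_2'$ all three terms are $\le R_0^{-\beta}$, so $\norm{\mu_1|_{G_0(x)}-\mu_{1,g}^x}\lesim R_0^{-\beta}$; since $B^d(0,10)\setminus G_0(x)$ lies in one $(R_0^{-\beta},k)$-plate by construction, pushing forward under $d^x$ and shrinking $\beta$ slightly yields \eqref{eqn:mu1good1}. I expect the main obstacle to be $\mathrm{Bord}$: one must verify that a single master value $a_0$ controls the straddling error simultaneously at every scale and for $\mu_2$-almost every $x$, which pins $\kappa$ (and $\eta$) into a narrow window and, more delicately, requires the multiscale wave-packet truncations to telescope cleanly — that is, that the net cutoff defining $\mu_{1,g}^x$ differs from $\one_{G_0(x)}$ only on the scale-by-scale bad-tube neighborhoods, modulo $\RapDec$ errors. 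A secondary difficulty is the $\mathrm{Plate}$ bound, which combines the geometric lemmas on heavy plates with the lower-dimensional reduction and is where the hypothesis $k-1<\alpha\le k$ gets used.
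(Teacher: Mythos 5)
Your high-level plan — radial projection for the $\mu_2$-heavy tubes, a probabilistic wiggle in $a$ for the borderline tubes, and the geometric lemmas on heavy plates for the $G_j$-chain — matches the paper's strategy. But there is a genuine logical gap at the very first step.

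You claim that, since pushforward does not increase total variation, it suffices to prove $\norm{\mu_1|_{G(x)}-\mu_{1,g}^x}_{\mathrm{TV}}\le R_0^{-\beta}$. That reduction cannot work. Recall $\mu_{1,g}^x = M_0(\mu_1|_{G_0(x)}) + \sum_{T \text{ good}} M_T\mu_1$: it discards \emph{every} non-good wave packet $M_T\mu_1$, including the overwhelming majority whose tube $T$ does not pass through $x$. Consequently $\mu_1|_{G_0(x)}-\mu_{1,g}^x$ contains the terms $M_T(\mu_1|_{G_0(x)})$ for all non-good $T$ away from $x$, and $M_T(\mu_1|_{G_0(x)})-M_T\mu_1$ for all good $T$ away from $x$; their total $L^1$ mass is order $1$, not $R_0^{-\beta}$. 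Your subsequent analysis (Rad, Plate, Bord) quietly restricts to tubes \emph{through} $x$, but nothing you have said justifies dropping the tubes away from $x$ — that step is exactly what the pushforward buys you, and you gave it away in the first sentence. The paper handles this by splitting into two genuinely different comparisons: first a true TV estimate $\norm{\mu_1|_{G(x)}-\mu_1^x}_{L^1}\le R_0^{-\beta}$ where $\mu_1^x=\sum_j \mu_1|_{G_j(x)}*\cpsi_j$ (which \emph{retains} all wave packets, only adjusting the restriction set with scale, so the chain $G_j(x)$ makes this small); and second a comparison of $d^x_*\mu_1^x$ to $d^x_*\mu_{1,g}^x$ performed only \emph{after} pushforward, where the tubes with $x\notin 2T$ contribute $\RapDec(R_j)$ (Lemma \ref{lem:types of tube}(a), i.e.\ \cite[Lemma 3.1]{guth2020falconer}). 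You cannot collapse these two steps into one TV bound.

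Two secondary misreadings of the construction are worth flagging. You describe $\mu_{1,g}^x$ as a limit of iterated wave-packet prunings $\mu_{1,g}^{x,\le j}$, but the paper defines it as a single sum over good tubes at all scales, with the $x$-dependence confined to the $M_0$ piece; the multiscale chain lives in $\mu_1^x$, not in $\mu_{1,g}^x$. And the dichotomy you invoke for the Plate term (restrict to the heavy $(R_0^{-\beta},k)$-plate and reduce to lower-dimensional Falconer) belongs to the deduction of Theorem \ref{thm: pinned} from the two propositions, not to the proof of Proposition \ref{mainest1} itself: the proposition only asserts that $B(0,10)\setminus G(x)$ lies in such a plate, which follows directly from Lemma \ref{lem:concentrate_in_r0} via Lemma \ref{lem:refine E_2}(b) without any case split.
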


\begin{prop} \label{mainest2} Let $d\geq 2$, $0 < \alpha < d$, and $\eps > 0$. Then for sufficiently small $\beta(\eps) \in (0, \eps)$ in the construction of $\mu_{1,g}^x$ in Section \ref{sec: good} below,
$$	
\int_{E_2}  \| d^x_*(\mu_{1,g}^x) \|_{L^2}^2 d \mu_2(x) \lesim_{d,\alpha,\eps} \int_{\mathbb{R}^d} |\xi|^{-\frac{\alpha d}{d+1}+\eps} |\hmu_1(\xi) |^2 \,d \xi + R_0^d.
$$
\end{prop}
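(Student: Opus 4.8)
\emph{Proof plan.} The strategy is to reduce the left-hand side, via the classical Fourier-analytic formula for distance pushforwards, to a dyadic sum over the frequency scales $R_j=2^jR_0$ of a single-scale weighted $L^2$ extension estimate for the sphere, and then to establish that single-scale estimate by a refined decoupling argument that exploits the one feature guaranteed by the construction in Section \ref{sec: good}: near frequency $R_j$ the measure $\mu_{1,g}^x$ decomposes into wave packets adapted to $R_j^{-1/2+\beta}$-tubes, each carrying $\mu_1$-mass $\lesssim R_j^{-\alpha/2+\eps}$ (and avoiding the heavy plates).

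\emph{Step 1: reduction to one scale.} Since $\dist(E_1,E_2)\gtrsim 1$ and $E\subset B(0,1)$, the (absolutely continuous, complex) measure $d^x_*(\mu_{1,g}^x)$ is supported in a fixed compact subinterval of $(0,\infty)$, so $\|d^x_*(\mu_{1,g}^x)\|_{L^2(\mathbb{R})}^2\sim\int_{\mathbb{R}}|\widehat{d^x_*(\mu_{1,g}^x)}(r)|^2\,dr$. Expanding $\widehat{d^x_*(\mu_{1,g}^x)}(r)=\int e^{-2\pi ir|x-y|}\,d\mu_{1,g}^x(y)$ and inserting the stationary-phase asymptotics of $\int_{S^{d-1}}e^{-2\pi ir\omega\cdot z}\,d\omega$ (valid since $|x-y|\sim 1$), one finds, exactly as in \cite{guth2020falconer,du2021improved,du2019sharp}, that the frequencies $|r|\lesssim R_0$ together with the Bessel error terms contribute $O(R_0^d)$ after integrating against $\mu_2$, while $\int_{E_2}\|d^x_*(\mu_{1,g}^x)\|_{L^2}^2\,d\mu_2(x)$ is, up to this error, comparable to
\begin{equation*}
    \sum_{j\ge 1}\int_{r\sim R_j} r^{d-1}\Big(\int_{E_2}\Big|\int_{S^{d-1}}e^{-2\pi ir\omega\cdot x}\,\widehat{\mu_{1,g}^x}(r\omega)\,d\omega\Big|^2 d\mu_2(x)\Big)\,dr.
\end{equation*}
It therefore suffices to prove, for each dyadic $R_j\ge R_0$ and each $r\sim R_j$, the single-scale estimate
\begin{equation*}
    \int_{E_2}\Big|\int_{S^{d-1}}e^{-2\pi ir\omega\cdot x}\,\widehat{\mu_{1,g}^x}(r\omega)\,d\omega\Big|^2 d\mu_2(x)\ \lesssim_{d,\alpha,\eps}\ R_j^{-\frac{\alpha d}{d+1}+\eps}\int_{S^{d-1}}|\widehat{\mu_1}(r\omega)|^2\,d\omega ,
\end{equation*}
since $\int_{r\sim R_j} r^{d-1}\int_{S^{d-1}}|\widehat{\mu_1}(r\omega)|^2\,d\omega\,dr\sim\int_{|\xi|\sim R_j}|\widehat{\mu_1}(\xi)|^2\,d\xi$ and summing over $j$ then produces the right-hand side of the proposition together with the $O(R_0^d)$ term.

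\emph{Step 2: the single-scale estimate.} This is the technical heart. After rescaling $x$ to range over the ball of radius $r\sim R_j$, the inner integral becomes an extension operator $Eg$ for $S^{d-1}$ on $B(0,R_j)$, with $g$ the (reparametrized) restriction of $\widehat{\mu_{1,g}^x}$ to the sphere of radius $r$ and with $\mu_2$ replaced by an $\alpha$-dimensional measure of constant $\sim R_j^{-\alpha}$. Decompose $Eg=\sum_T (Eg)_T$ into wave packets adapted to $R_j^{-1/2+\beta}$-tubes $T$; the slight fattening beyond the exact scale $R_j^{-1/2}$ costs only $R_j^{O(\beta)}$ throughout and is absorbed by taking $\beta$ small relative to $\eps$. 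By the construction in Section \ref{sec: good} these wave packets form a sub-collection of the wave packets of $\mu_1$, each surviving one satisfying $\|(Eg)_T\|_2^2\lesssim R_j^{-\alpha/2+\eps}$ (up to a fixed normalization), and $\sum_T\|(Eg)_T\|_2^2$ being bounded by the corresponding total for $\mu_1$, namely $\sim\int_{S^{d-1}}|\widehat{\mu_1}(r\omega)|^2\,d\omega$; crucially, only this uniform per-tube bound and this total enter, not the identity of the surviving tubes, so the $x$-dependence of $\mu_{1,g}^x$ is immaterial here. One then runs refined decoupling in the style of \cite{guth2020falconer}: apply $\ell^2$-decoupling for the sphere together with the wave-packet concentration on superlevel sets of $|Eg|$ (refined Strichartz), pigeonhole the surviving packets and the level sets into $O(\log R_j)$ dyadic classes, and on each class combine the $\alpha$-dimensional bound for the $\mu_2$-mass of an $R_j^{1/2}$-neighborhood of a tube with the flat coefficient bound $\|(Eg)_T\|_2^2\lesssim R_j^{-\alpha/2+\eps}$. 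Optimizing these two inputs yields precisely the gain $R_j^{-\alpha d/(d+1)+\eps}$ over the trivial Plancherel bound; this exponent exceeds what the unconditional weighted restriction estimate of \cite{du2019sharp} gives, the excess being entirely due to the flat distribution of $\mu_{1,g}^x$-mass over tubes (which, by the sharpness remark in the introduction, cannot be improved). Summing over $j$ and restoring the $O(R_0^d)$ term completes the proof.

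\emph{Expected main obstacle.} The delicate point is the clean execution of Step 2. One must verify that the frequency $\sim R_j$ layer of the complex, multiscale measure $\mu_{1,g}^x$ genuinely inherits from the construction in Section \ref{sec: good} both the per-tube mass bound $\lesssim R_j^{-\alpha/2+\eps}$ and, where the direction count needs it, the heavy-plate avoidance; keep careful track of how the wave-packet decomposition of $Eg$ interlocks with the tube decomposition underlying $\mu_{1,g}^x$; and choose $\beta(\eps)\in(0,\eps)$ small enough that every $R_j^{O(\beta)}$ loss — from fattened tubes, from the pigeonholing, and from passing between scales $r\sim R_j$ — is swallowed by $R_j^\eps$. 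Conceptually, this $L^2$ estimate introduces nothing new beyond the template of \cite{guth2020falconer,du2021improved,du2019sharp}; the genuinely new ingredients of this paper (the radial projection theorem of the third author and the $L^1$ closeness of Proposition \ref{mainest1}) live in the construction of $\mu_{1,g}^x$, not in this proposition.
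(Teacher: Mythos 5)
Your high-level plan follows the paper's: reduce $\|d^x_*(\mu_{1,g}^x)\|_{L^2}^2$ to a sum over dyadic frequency scales of a weighted extension estimate for the sphere (the paper does this cleanly via Liu's $L^2$-identity $\int_0^\infty |f*\sigma_t(x)|^2 t^{d-1}\,dt = \int_0^\infty |f*\hsigma_r(x)|^2 r^{d-1}\,dr$ rather than stationary phase, but the effect is the same), observe that the only $x$-dependent piece $M_0(\mu_1|_{G_0(x)})$ has low Fourier support and so vanishes identically against $\hsigma_r$ when $r>10R_0$, handle $r\le 10R_0$ trivially for the $R_0^d$ term, and then prove the single-scale estimate (Lemma 5.1) by refined decoupling (Theorem 5.3). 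So the route is essentially the paper's.

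However, your Step 2 misidentifies the mechanism in a way that would break the numerology if carried out literally. You assert that ``each surviving [good] one satisfying $\|(Eg)_T\|_2^2\lesssim R_j^{-\alpha/2+\eps}$'' and later speak of a ``flat coefficient bound $\|(Eg)_T\|_2^2\lesssim R_j^{-\alpha/2+\eps}$'' — i.e.\ you locate the gain in a bound on the $\mu_1$-side wave-packet coefficients. But the construction in Section 4.2 defines a good tube by the condition $\mu_2(4T)\le R_j^{-\alpha/2+\eps_0}$, a bound on the \emph{other} measure, and imposes no constraint whatsoever on $\|M_T\mu_1\|_2$ or $\mu_1(T)$; the hypothesis $\|f_T\|_{L^p}\sim\text{const}$ in the refined decoupling theorem is only a dyadic pigeonholing normalization, not a structural input. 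The gain $(M/W)^{1/2-1/p}$ is extracted by controlling $M$ through the double count
\begin{equation*}
M\,\mu_2\bigl(\mathcal{N}_{r^{-1/2}}(Y_M)\bigr)\ \le\ \sum_{T\in\mathbb{W}_\lambda}\mu_2(4T)\ \lesssim\ |\mathbb{W}_\lambda|\,R_j^{-\alpha/2+\eps_0},
\end{equation*}
which is exactly where goodness (hence, upstream, the radial projection theorem) is consumed, and the factor $\mu_2(\mathcal{N}_{r^{-1/2}}(Y_M))$ is then traded off against the weight bound $\|\mu_2*\eta_{1/r}\|_\infty\lesssim r^{d-\alpha}$ via H\"older. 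At no point is a pointwise $\ell^2$-bound on the $\mu_1$ wave packets used or available; running the argument with your claimed bound in place of the $\mu_2(4T)$ bound would not produce the exponent $-\alpha d/(d+1)$. You should also note that the surviving tube collection is literally $x$-independent (only the low-frequency $M_0$ piece depends on $x$), so nothing needs to be argued about the ``identity of the surviving tubes'' being immaterial.
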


\begin{remark}
    Broadly speaking, $G(x)$ removes the contributions from small plates that contain large mass. This makes it possible to efficiently apply the new radial projection theorem, Theorem \ref{conj:threshold}.
\end{remark}




\begin{proof} [Proof of Theorem \ref{thm: pinned} using Propositions \ref{mainest1} and \ref{mainest2}]
For $d\geq 3$ and $\frac{d}{2}<\alpha < \frac{d+1}{2}$, let $k=\floor{\frac d 2} +1$ so that $k-1<\alpha\leq k$. If $\mu_1$ gives nonzero mass to some $k$-dimensional affine plane, then we are done by applying \cite{falconer1985hausdorff} to that $k$-plane since $\alpha>\frac d2 \geq \frac{k+1}{2}$. Thus, assume $\mu_1$ gives zero mass to every $k$-dimensional affine plane. By a compactness argument, there exists $r_0 > 0$ such that $\mu_1 (H) < \frac{1}{1000}$ for any $(r_0, k)$-plate $H$. Now the two propositions tell us that there is a point $x \in E_2$ such that
\begin{gather*}
    \norm{d_*^x (\mu_1|_{G(x)}) - d_*^x (\mu_{1,g}^x)}_{L^1} \le \frac{1}{1000}, \\
    \norm{d_*^x (\mu_{1,g}^x)}^2_{L^2} \lesim I_\lambda (\mu_1) + R_0^d < \infty,
\end{gather*}
by choosing $R_0$ sufficiently large. Here $I_{\lambda} (\mu_1)$ is the $\lambda$-dimensional energy of $\mu_1$ and $\lambda=d-\frac{\alpha d}{d+1}+\eps$, by a Fourier representation for $I_\lambda$:
\[
I_\lambda (\mu) = \int \int |x-y|^{-\lambda} d\mu(x) d\mu(y)=C_{d, \lambda}  \int_{\mathbb{R}^d} |\xi|^{\lambda-d} |\hmu(\xi) |^2 \,d \xi.
\]
One has $I_\lambda(\mu_1)<\infty$ if $\lambda<\alpha$, which is equivalent to $\alpha>\frac{d(d+1)}{2d+1}=\frac{d}{2}+\frac{1}{4}-\frac{1}{8d+4}$.
Now $B^d(0,10) \setminus G(x)$ is contained in some $(R_0^{-\beta}, k)$-plate $H$. If $R_0$ is chosen sufficiently large such that $R_0^\beta > r_0^{-1}$, then
\begin{equation*}
    \mu_1 (G(x)) \ge 1 - \mu_1 (H) > 1 - \frac{1}{1000}.
\end{equation*}
Since $d_*^x (\mu_1|_{G(x)})$ is a positive measure, its $L^1$ norm is $\mu_1 (G(x)) > 1 - \frac{1}{1000}$. Thus,
    \begin{multline*}
        \int_{\Delta_x (E)} |d_*^x (\mu_{1,g}^x)| = \int |d_*^x (\mu_{1,g}^x)| - \int_{\Delta_x (E)^c} |d_*^x (\mu_{1,g}^x)| \\
        \ge 1 - \frac{2}{1000} - \int |d_*^x (\mu_1|_{G(x)}) - d_*^x (\mu_{1,g}^x)| \ge 1 - \frac{3}{1000}.
    \end{multline*}
On the other hand,
\begin{equation*}
    \int_{\Delta_x (E)} |d_*^x (\mu_{1,g}^x)| \le |\Delta_x (E)|^{1/2} \left( \int |d_*^x (\mu_{1,g}^x)|^2 \right)^{1/2}.
\end{equation*}
Therefore, $|\Delta_x (E)| > 0$.
\end{proof}

The proof of Theorem \ref{thm:distance_hausdorff} is deferred until Section \ref{sec:hausdorff}.

\section{Radial projections and heavy plates} \label{sec-radial}
In this section, we list several results that we will use in Sections \ref{sec:pruning} and \ref{sec:bad measure} to bound the bad part of $\mu_1$.

We'll use the following new radial projection estimate, which follows from \cite[Theorem 1.13]{KevinRadialProj}.

\begin{theorem}\label{conj:threshold}
Let $d\geq 2$, $k \in \{ 1, 2, \cdots, d-1 \}$, $k-1 < \alpha \le k$, and fix $\eta, \eps > 0$, and two $\alpha$-dimensional measures $\mu_1, \mu_2$ with constants $C_{\mu_1}, C_{\mu_2}$ supported on $E_1, E_2 \subset B(0, 1)$ respectively. There exists $\gamma > 0$ depending on $\eta, \eps, \alpha, k$ such that the following holds. Fix $\delta < r < 1$. Let $A$ be the set of pairs $(x, y) \in E_1 \times E_2$ satisfying that $x$ and $y$ lie in some $\delta^\eta$-concentrated $(r,k)$-plate on $\mu_1 + \mu_2$. Then there exists a set $B \subset E_1 \times E_2$ with $\mu_1 \times \mu_2 (B) \le \delta^{\gamma}$ such that for every $x \in E_1$ and $\delta$-tube $T$ through $x$, we have
\begin{equation*}
    \mu_2 (T \setminus (A|_x \cup B|_x)) \lesim \frac{\delta^\alpha}{r^{\alpha-(k-1)}} \delta^{-\eps}.
\end{equation*}
The implicit constant may depend on $\eta, \eps, \alpha, k, C_{\mu_1}, C_{\mu_2}$.
\end{theorem}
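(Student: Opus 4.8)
The plan is to reduce this statement to the general radial projection theorem \cite[Theorem 1.13]{KevinRadialProj}, whose conclusion governs how $\mu_2$-mass distributes over $\delta$-tubes through a typical point $x$ once one excises pairs $(x,y)$ that are forced to be collinear-at-scale-$r$ through heavy $(r,k)$-plates. First I would set up the dictionary between the two formulations: the set $A$ here is precisely the set of pairs killed by the ``plate'' hypothesis in \cite{KevinRadialProj}, since a $\delta^\eta$-concentrated $(r,k)$-plate on $\mu_1+\mu_2$ is exactly the obstruction that paper identifies, and the bound $\frac{\delta^\alpha}{r^{\alpha-(k-1)}}\delta^{-\eps}$ is the normalized mass a single $\delta$-tube can pick up from an $\alpha$-dimensional measure that is allowed to concentrate on a $(r,k)$-plate (a $\delta$-tube inside an $(r,k)$-plate meets a piece of $\mu_2$ of measure $\lesssim r^{k-1}\cdot(\delta/r)^{\dots}$ — I would verify the exponent bookkeeping here carefully, since this is where $k-1$ versus $k$ enters). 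The parameter $\gamma$ and the exceptional set $B$ with $\mu_1\times\mu_2(B)\le\delta^\gamma$ come directly from the quantitative version of the theorem.

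The key steps, in order, would be: (i) quote \cite[Theorem 1.13]{KevinRadialProj} and restate its hypotheses and conclusion in the present notation, being explicit about which $\eps,\eta$ in that paper correspond to which here; (ii) check that our $\alpha$-dimensional probability measures with constants $C_{\mu_1},C_{\mu_2}$ satisfy the Frostman-type hypotheses required there, noting that $k-1<\alpha\le k$ is exactly the admissible range in that theorem for $(r,k)$-plates; (iii) identify $A$ with the ``collinear'' bad set and absorb the pairs in $A|_x$ into the conclusion — i.e. the theorem's bound holds for $T\setminus B|_x$ after one further removes the part of $T$ lying in heavy plates through $x$, which is $A|_x$ by definition; (iv) track the dependence of the implicit constant on $\eta,\eps,\alpha,k,C_{\mu_1},C_{\mu_2}$ and confirm it matches. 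A minor point to address is that \cite{KevinRadialProj} may state the result for a single measure or with $\mu_1=\mu_2$; if so, I would apply it to $\mu_1+\mu_2$ (still $\alpha$-dimensional, with constant $C_{\mu_1}+C_{\mu_2}$) and then restrict the tube-mass estimate back to $\mu_2\le\mu_1+\mu_2$, which only helps.

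The main obstacle I expect is bookkeeping rather than a genuinely new idea: matching the precise normalization of ``concentrated plate'' (the $\delta^\eta$ threshold on $\mu_1+\mu_2$-mass), the precise power of $r$ in the final bound, and the precise form of the exceptional set between the two papers. In particular I would need to make sure that the multi-scale or single-scale nature of \cite[Theorem 1.13]{KevinRadialProj} is compatible with the fixed pair of scales $\delta<r<1$ used here — if that theorem is phrased with a continuum of scales one must specialize it, and if it is phrased at one scale one must check no loss is incurred in passing to the pair $(\delta,r)$. Once the translation is pinned down, the deduction is immediate and requires no further estimation.
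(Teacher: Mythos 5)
Your proposal takes the same route the paper does: the paper does not prove Theorem \ref{conj:threshold} from scratch but simply states that it ``follows from \cite[Theorem 1.13]{KevinRadialProj},'' which is exactly the reduction you outline (match hypotheses, identify the excised pairs $A$ with the heavy-plate obstruction there, track $\gamma$ and the constants). Your parenthetical heuristic for the exponent $\frac{\delta^\alpha}{r^{\alpha-(k-1)}}$ is somewhat off --- that quantity is the mass a $\delta$-tube is allowed to carry \emph{after} the heavy-plate contributions $A|_x$ have been removed (interpolating between the Frostman bound $\delta^\alpha$ when $r\sim 1$ and the $(k-1)$-dimensional-projection bound $\delta^{k-1}$ when $r\sim\delta$), not what a tube picks up from mass concentrated inside an $(r,k)$-plate --- but since the substance of your argument is a citation to \cite{KevinRadialProj} rather than this heuristic, this does not affect correctness.
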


\begin{remark}
    (a) The roles of $\mu_1$ and $\mu_2$ in Theorem \ref{conj:threshold} are interchangeable, so the conclusion also holds for $\mu_1$ instead of $\mu_2$.

    (b) If $\alpha>d-1$, then the numerology of Theorem \ref{conj:threshold} doesn't apply. Instead, Orponen's radial projection theorem \cite{orponen2018radial} in dimension $d$ applies. The result (stated in \cite[Lemma 3.6]{guth2020falconer} for $d = 2$, but can be generalized to all dimensions $d$) is that for $\gamma = \eps/C$, there exists a set $B \subset E_1 \times E_2$ with $\mu_1 \times \mu_2 (B) \le \delta^{\gamma}$ such that for every $x \in E_1$ and $\delta$-tube $T$ through $x$, we have
\begin{equation*}
    \mu_2 (T \setminus B|_x) \lesim \delta^{d-1-\eps}.
\end{equation*}
Note that the set $A$ of ``concentrated pairs'' is not needed here.
    
    (c) If $r \sim \delta$, we can obtain a slightly better result by projecting to a $k$-dimensional subspace and following the argument in \cite[Section 3.2]{du2021improved}. The result is that for $\gamma = \eps/C$, there exists a set $B \subset E_1 \times E_2$ with $\mu_1 \times \mu_2 (B) \le \delta^{\gamma}$ such that for every $x \in E_1$ and $\delta$-tube $T$ through $x$, we have
\begin{equation*}
    \mu_2 (T \setminus B|_x) \lesim \delta^{k-1-\eps}.
\end{equation*}
    The set $A$ is again not needed in this case. The main novelty of Theorem \ref{conj:threshold} comes when $r > \delta$.
\end{remark}

We will also need the following two lemmas from \cite{KevinRadialProj} (Lemmas 7.5 and 7.8) governing the physical location of small plates with large mass. 

\begin{lemma}\label{lem:few_large_plates}
    Let $k-1 < s \le k$ and $0 < r \le 1$. There is $N=N(s, k, d)$ such that the following holds: let $\nu$ be an $s$-dimensional measure with constant $C_\nu \ge 1$, and let $\cE_{r,k}$ be the collection of essentially distinct $(r, k)$-plates from the Notations part of Section 1. Let $\cH = \{ H \in \cE_{r,k} : \nu(H) \ge a \}$. Then $|\cH| \lesim (\frac{C_\nu}{a})^N$. (The implicit constant only depends on $k, d$ and is independent of $a, r$.)
\end{lemma}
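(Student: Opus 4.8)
The plan is to prove this by induction on the dimension parameter $k$, peeling off one dimension at a time using the fact that an $(r,k)$-plate is a union of boundedly many $(r,k-1)$-plates arranged transversally, combined with the $s$-dimensional Frostman bound on $\nu$. The base case $k=1$ (so $0<s\le 1$) is essentially the standard fact that an $s$-dimensional measure cannot put mass $\ge a$ on too many essentially distinct $r$-tubes: each $r$-tube has $\nu$-mass at most... well, not directly, so one instead argues as follows. Cover $B(0,10)$ by a single auxiliary structure and observe that if $H\in\cH$ is an $(r,1)$-plate with $\nu(H)\ge a$, then by the Frostman condition the central line $P_H$ must pass $r$-close to a ``core'' of $\nu$-mass, and the number of essentially distinct such tubes is controlled by a covering-number computation against the $s$-dimensional mass. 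Concretely, one can run the standard argument: if there were $\gg (C_\nu/a)^N$ essentially distinct heavy $(r,1)$-plates, a pigeonholing at scale $r$ followed by the measure lower bound would force $\nu$ of some $r$-ball to exceed $C_\nu r^s$, a contradiction. This gives $N=N(s,1,d)$.

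For the inductive step, suppose the statement holds for $k-1$ with some exponent $N(s',k-1,d)$ for all admissible $s'$, and let $k-1<s\le k$. Given an $(r,k)$-plate $H$ with $\nu(H)\ge a$, pick any $(k-1)$-dimensional affine subspace $Q\subset P_H$; then $H$ is covered by $\sim (1/r)$ translates of the $(r,k-1)$-plate $Q^{(r)}$ inside $H$ in the direction orthogonal to $Q$ within $P_H$. Hence one of these translates carries $\nu$-mass $\gtrsim ar$, so every heavy $(r,k)$-plate contains a heavy $(r,k-1)$-subplate with threshold $\gtrsim ar$. Now I would apply the inductive hypothesis: the number of essentially distinct heavy $(r,k-1)$-plates is $\lesssim (C_\nu/(ar))^{N(s,k-1,d)}$. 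The remaining task is to bound, for each fixed heavy $(r,k-1)$-plate $H'$, the number of essentially distinct $(r,k)$-plates of $\cE_{r,k}$ that contain a subplate essentially equal to $H'$: an $(r,k)$-plate containing $H'$ is determined (up to $\sim 1$ ambiguity) by the extra direction it spans, i.e.\ by a point in a $(d-k+1)$-sphere discretized at scale $r$, giving $\lesssim r^{-(d-k)}$ choices. Multiplying, $|\cH|\lesssim (C_\nu/(ar))^{N(s,k-1,d)} r^{-(d-k)} \lesssim (C_\nu/a)^{N(s,k-1,d)+d-k+1}$ after absorbing powers of $r$ using $a\le C_\nu r^{s} \cdot(\text{something})$...

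Here is where the \emph{main obstacle} lies: one must genuinely eliminate the stray powers of $r$ to get a bound depending only on $C_\nu/a$ and not on $r$. This is where the hypothesis $s>k-1$ is essential. The point is that a heavy $(r,k-1)$-plate $H'$ with $\nu(H')\ge ar$ cannot itself be too ``thin'': restricting (a renormalization of) $\nu$ to a neighborhood of $P_{H'}$, one gets an $s$-dimensional-type measure on a $(k-1)$-dimensional set, but since $s>k-1$ the measure cannot concentrate, which forces $ar\lesssim C_\nu r^{s-(k-1)}\cdot(\cdots)$, i.e.\ $r^{-(d-k)}$ is controlled by a power of $C_\nu/a$. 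Making this rigorous requires carefully tracking how the Frostman constant transforms under the rescaling that flattens a plate to a ball, and choosing the running exponent $s'$ in the induction appropriately (one may need $s' = s-1$ or to keep $s$ fixed depending on how the rescaling interacts with the measure). Since this is exactly Lemma 7.5 of \cite{KevinRadialProj}, I would ultimately cite that argument, but the sketch above — induction on $k$, splitting plates into transversal sub-plates, and using $s>k-1$ to kill the $r$-dependence — is the skeleton I would follow.
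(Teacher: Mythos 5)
The paper does not prove this lemma: it simply cites Lemma~7.5 of \cite{KevinRadialProj}, and you acknowledge as much at the end. But the sketch you supply in place of that citation has two genuine gaps, one in the base case and one in the way you try to kill the stray powers of $r$ in the inductive step.

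\emph{Base case.} The ``pigeonholing at scale $r$'' you describe does not produce a cardinality bound. If there are $M$ heavy $r$-tubes, summing $\nu(T_i)\ge a$ over tubes and covering each tube by $\sim r^{-1}$ balls of radius $r$ only yields, after pigeonholing, a ball of mass $\gtrsim ar$; comparing with the Frostman bound gives $ar\lesssim C_\nu r^{s}$, i.e.\ $a\lesssim C_\nu r^{s-1}$. This is a constraint relating $a$ and $r$, not a bound on $M$. No amount of rearranging this computation bounds the \emph{number} of heavy tubes; you need to exploit that two heavy tubes cannot overlap too much.

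\emph{Inductive step.} After descending to heavy $(r,k-1)$-subplates with threshold $\gtrsim ar$ and multiplying by the $\lesssim r^{-(d-k)}$ extensions, your bound is $(C_\nu/(ar))^{N'}\,r^{-(d-k)}$, i.e.\ an extra factor $r^{-(N'+d-k)}$. You propose to absorb it using $ar\lesssim C_\nu r^{s-(k-1)}$, which rearranges to $r^{k-s}\lesssim C_\nu/a$. But since $k-1<s\le k$ we have $0\le k-s<1$, so this gives $r^{-1}\lesssim (C_\nu/a)^{1/(k-s)}$ — an exponent that blows up as $s\to k^-$, and \emph{no} bound on $r^{-1}$ at all when $s=k$. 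The lemma explicitly allows $s=k$, so the proposed absorption mechanism genuinely does not close the argument. The issue is that you are invoking $s>k-1$ in the wrong place: it does not control $r$ via the heavy-plate condition (that would require $s<k$).

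The actual role of $s>k-1$ is to control \emph{pairwise intersections}: if two $(r,k)$-plates $H_1,H_2$ have central planes $\rho$-separated (with $\rho\ge Cr$), then $H_1\cap H_2$ sits inside an $(O(r/\rho),k-1)$-plate, and the Frostman condition gives $\nu(H_1\cap H_2)\lesssim C_\nu(r/\rho)^{s-(k-1)}$, which decays precisely because $s>k-1$. Taking $\rho_0 = r\,(C C_\nu/a^2)^{1/(s-(k-1))}$ so that these intersections have mass $\le a^2/C'$, any pairwise $\rho_0$-separated subfamily $\{H_1,\dots,H_m\}\subset\cH$ satisfies $1\ge\nu(\bigcup H_i)\ge ma-\binom{m}{2}a^2/C'$, forcing $m\lesssim 1/a$. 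Then every plate of $\cH$ lies within $\rho_0$ of one of these $\lesssim 1/a$ representatives, and the covering property of $\cE_{r,k}$ bounds the number of $(r,k)$-plates within $\rho_0$ of a fixed one by $\lesssim(\rho_0/r)^{(k+1)(d-k)}$. Multiplying gives $|\cH|\lesssim a^{-1}(C_\nu/a^2)^{(k+1)(d-k)/(s-(k-1))}$, which is of the claimed form $(C_\nu/a)^N$ with $N$ depending only on $s,k,d$ and finite for all $k-1<s\le k$. No induction on $k$ is needed, and the endpoint $s=k$ is handled on an equal footing. I suggest replacing the $k$-induction sketch with this pairwise-intersection / maximal-separated-subfamily argument (or just leaving the citation to \cite{KevinRadialProj}).
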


\begin{lemma}\label{lem:concentrate_in_r0}
    Let $0<r<r_0\lesssim 1$ and $s > k-1$. Let $\cH$ be a collection of $(r,k)$-plates, and let $\mu$ be a compactly supported $s$-dimensional measure with constant $C_\mu$. Then for all $x \in \spt(\mu)$ except a set of $\mu$-measure $\lesssim C_\mu \left( \frac{r}{r_0} \right)^{s-(k-1)} |\cH|^2$, there exists an $(r_0,k)$-plate that contains every $(r,k)$-plate in $\cH$ that passes through $x$.
\end{lemma}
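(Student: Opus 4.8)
The plan is to reduce the lemma to a statement about \emph{pairs} of plates. Fix a constant $C'=C'(d,k)$ (arising in the ``upgrade'' step below) and set $r_1:=r_0/C'$. Call a pair $(H_1,H_2)\in\cH\times\cH$ \emph{bad} if there is no $(r_1,k)$-plate containing $H_1\cup H_2$, and define the exceptional set
\[
Z:=\bigcup_{(H_1,H_2)\ \text{bad}}\ (H_1\cap H_2).
\]
I will show (i) $\mu(Z)\lesssim C_\mu (r/r_0)^{s-(k-1)}|\cH|^2$ and (ii) for every $x\notin Z$, all plates of $\cH$ through $x$ lie in a single $(r_0,k)$-plate; together these give the lemma (here ``through $x$'' means $x\in H$, or $x\in aH$ for the master parameter $a\lesssim1$, which only affects constants, since if $x$ lies in two such plates their central $k$-planes pass within $O(r)\le O(r_1)$ of a common point).

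For (i), the geometric heart is the claim: if $H_1,H_2$ are $(r,k)$-plates whose central planes pass within $O(r_1)$ of a common point and which are \emph{not} contained in a common $(r_1,k)$-plate, then $H_1\cap H_2$ lies in some $(Cr/r_1,\,k-1)$-plate with $C=C(d,k)$. Granting this, a $(\rho,k-1)$-plate inside $B(0,10)$ is covered by $\lesssim\rho^{-(k-1)}$ balls of radius $\sim\rho$, so its $\mu$-measure is $\lesssim\rho^{-(k-1)}\cdot C_\mu\rho^{s}=C_\mu\rho^{s-(k-1)}$; taking $\rho=Cr/r_1\sim r/r_0$ and using $s-(k-1)>0$ gives $\mu(H_1\cap H_2)\lesssim C_\mu (r/r_0)^{s-(k-1)}$ for each bad pair, and summing over the $\le|\cH|^2$ pairs yields (i). To prove the geometric claim, let $P_1,P_2$ be the central $k$-planes and $\theta$ the largest principal angle between their linear parts. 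If $\theta$ is at most a small enough constant times $r_1$, then — as $P_1,P_2$ pass within $O(r_1)$ of a common point — $H_1$ and $H_2$ both lie in the $(r_1,k)$-plate obtained by thickening a suitable translate of $P_1$, contradicting badness; hence $\theta\gtrsim r_1$. Choosing orthonormal coordinates that put $(P_1,P_2)$ in canonical position (linear part of $P_1$ equal to $\mathrm{span}(e_1,\dots,e_k)$, linear part of $P_2$ obtained by rotating some of $e_1,\dots,e_k$ into $e_{k+1},\dots,e_d$ through the principal angles) and translating the common point to the origin, one checks directly that any point of $H_1\cap H_2$ must have one of its first $k$ coordinates of size $\lesssim r/\theta$ while all of its last $d-k$ coordinates are of size $\lesssim r$; since $\theta\gtrsim r_1\sim r_0$, this exhibits $H_1\cap H_2$ inside a $(Cr/r_1,k-1)$-plate.

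For (ii), suppose $x\notin Z$; then every pair of plates of $\cH$ through $x$ is contained in a common $(r_1,k)$-plate. The key point is that, for $(r,k)$-plates through a fixed point, being contained in a common $(r_1,k)$-plate is equivalent — up to an $O(1)$ factor in $r_1$ — to the linear parts of the central planes being $\lesssim r_1$-close in the natural metric on $\mathrm{Gr}(k,d)$ (say the operator norm of the difference of orthogonal projections). Since that is a genuine metric, pairwise $\lesssim r_1$-closeness of $\{\mathrm{Lin}(P_H):H\in\cH,\ x\in H\}$ forces them all to be within $\lesssim r_1$ of $\mathrm{Lin}(P_{H_0})$ for a fixed reference $H_0\in\cH$ through $x$; hence all plates of $\cH$ through $x$ lie in the $(C'r_1,k)$-plate obtained by thickening the translate of $P_{H_0}$ through $x$. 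Choosing $C'$ to be this constant gives $C'r_1=r_0$ and proves (ii) (and the factor $(C')^{s-(k-1)}\lesssim_{d,k}1$ lost in passing from $r_1$ to $r_0$ in the measure bound is absorbed into the implicit constant).

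The step I expect to be the main obstacle is the geometric transversality claim in (i): carrying out the coordinate computation cleanly when several principal angles between $P_1$ and $P_2$ are simultaneously large, and handling affine (rather than purely linear) planes together with the $O(r)$ slack from $x\in aH_i$. The remaining ingredients — the Frostman covering estimate, the union bound over the $\le|\cH|^2$ pairs, and the metric-space ``pairwise $\Rightarrow$ jointly'' upgrade in (ii) — are routine, the only care needed being to track the $O(1)$ losses in the scale so that the conclusion involves exactly $r_0$.
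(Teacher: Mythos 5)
Your strategy — reduce to pairs of plates, bound $\mu(Z)$ where $Z$ is the union of $H_1 \cap H_2$ over pairs not contained in a common $(r_1,k)$-plate, and then upgrade from pairwise to joint containment via the Grassmannian metric — is sound, and the key transversality estimate (intersection of two $r$-neighborhoods of $k$-planes at angle $\theta\gtrsim r_1$ fits in a $(Cr/\theta,k-1)$-plate) is correct. The paper itself does not prove this lemma (it cites Lemma 7.8 of \cite{KevinRadialProj}), so there is no internal proof to compare against, but this is the natural argument and the $|\cH|^2$ factor in the conclusion corresponds exactly to your union bound over pairs.

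Two imprecisions are worth cleaning up. First, in the geometric claim in (i), the hypothesis ``central planes pass within $O(r_1)$ of a common point'' is too weak to deduce $\theta\gtrsim r_1$: two parallel ($\theta=0$) $k$-planes at distance $100r_1$ satisfy that hypothesis and fail to lie in a common $(r_1,k)$-plate, but of course $\theta\not\gtrsim r_1$. Luckily such a pair has $H_1\cap H_2=\emptyset$, so it contributes nothing to $Z$; but the argument should be run under the hypothesis that actually holds, namely $H_1\cap H_2\neq\emptyset$, which forces the central planes to pass within $2r$ (not just $O(r_1)$) of a common point. With that replacement, the estimate $Cr + 10c\,r_1 \le r_1$ needed to contradict badness does hold, provided $r\le r_1/(\text{const})$. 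Second, that last proviso $r\lesssim r_0$ (with a possibly large implicit constant $2C C'$) should be stated explicitly as a WLOG reduction; it is justified because when $r\gtrsim r_0$ the right-hand side $C_\mu(r/r_0)^{s-(k-1)}|\cH|^2$ already dominates $1$ (after adjusting the implicit constant, which may depend on $s,k,d$), so the conclusion is vacuous in that regime. Neither issue affects the substance of the argument.
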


\section{Construction of good measure and Proposition \ref{mainest1}}\label{sec:prop2.1}
\setcounter{equation}0

In this section, we will construct the good measure $\mu_{1,g}^x$ and prove Proposition \ref{mainest1}. We will henceforth treat $\alpha, k, d, \eps$ in the hypothesis of Proposition \ref{mainest1} as fixed constants. To assist in the proof, we will eventually be choosing the following parameters: $\eps_0 (\eps)$, $\kappa(\eps_0)$, $\eta(\kappa, \eps_0)$, $\beta(\kappa, \eta, \eps_0)$. In terms of size, they satisfy
\begin{equation*}
    0 < \beta \ll \eta \ll \kappa \ll \eps_0 \ll \eps.
\end{equation*}
Here, $A \ll B$ means ``$A$ is much smaller than $B$.'' Unwrapping the dependences, we see that $\beta$ ultimately only depends on $\eps$, which is what we want.

\subsection{Smooth Partitions of Unity}\label{sec:smooth partitions}
We follow the first part of \cite[Section 3.1]{du2021improved}. Let $R_0$ be a large power of $2$ that will be determined later, and let $R_j = 2^j R_0$. Construct a partition of unity
\begin{equation*}
    1 = \sum_{j \ge 0} \psi_j,
\end{equation*}
where $\psi_0$ is supported in the ball $|\omega| \le 2R_0$ and each $\psi_j$ for $j \ge 1$ is supported on the annulus $R_{j-1} \le |\omega| \le R_{j+1}$. Importantly, we may choose $\psi_j$ such that $\norm{\cpsi_j}_{L^1} \le C$ for some absolute constant $C$ and all $j \ge 1$.
For example, choose $\psi_j$ to be Littlewood-Paley functions $\chi(x/R_{j}) - \chi(x/R_{j-1})$, where $\chi$ is a smooth bump function that is $1$ on $B(0, 1)$ and $0$ outside $B(0, 2)$.

In $\R^d$, cover the annulus $R_{j-1} \le |\omega| \le R_{j+1}$ by rectangular blocks $\tau$ of dimensions approximately $R_j^{1/2} \times \cdots \times R_j^{1/2} \times R_j$, with the long direction of each block $\tau$ being the radial direction. Choose a smooth ``partition of unity'' with respect to this cover such that
\begin{equation*}
    \psi_j = \sum_{\tau} \psi_{j,\tau} (\omega).
\end{equation*}
The functions $\psi_{j,\tau}$ satisfy the following properties:
\begin{itemize}
    \item $\psi_{j,\tau}$ is supported on $\tau$ and $\norm{\psi_{j,\tau}}_{L^\infty} \le 1$;

    \item $\cpsi_{j,\tau}$ is essentially supported on a $R_j^{-1/2} \times \cdots \times R_j^{-1/2} \times R_j^{-1}$ box $K$ centered at $0$, in the sense that $|\cpsi_{j,\tau} (x)| \le \RapDec(R_j)$ if $\dist (x, K) \gesim R_j^{-1/2+\beta}$ (and the implicit constant in the decay $C_N R_j^{-N}$ is universal only depending on $N$);

    \item $\norm{\cpsi_{j,\tau}}_{L^1} \lesim 1$ (the implicit constant is universal).
\end{itemize}

For each $(j, \tau)$, cover the unit ball in $\R^d$ with tubes $T$ of dimensions approximately $R_j^{-1/2 + \beta} \times\cdots \times R_j^{-1/2+\beta} \times 20$ with the long axis parallel to the long axis of $\tau$. The covering has uniformly bounded overlap, each $T$ intersects at most $C(d)$ other tubes. We denote the collection of all these tubes as $\mathbb{T}_{j, \tau}$. Let $\eta_T$ be a smooth partition of unity subordinate to this covering, so that for each choice of $j$ and $\tau$, $ \sum_{T \in \mathbb{T}_{j, \tau}} \eta_T $ is equal to 1 on the ball of radius 10 and each $\eta_T$ is smooth.

For each $T \in \mathbb{T}_{j, \tau}$, define an operator
\[
M_T f := \eta_T (\psi_{j, \tau} \hat f)^{\vee},
\]
which, morally speaking, maps $f$ to the part of it that has Fourier support in $\tau$ and physical support in $T$.  Define also $M_0 f := (\psi_0 \hat f)^{\vee}$.  We denote $\mathbb{T}_j = \cup_{\tau} \mathbb{T}_{j, \tau}$ and $\mathbb{T} = \cup_{j \ge 1} \mathbb{T}_j$. Hence, for any $L^1$ function $f$ supported on the unit ball, one has the decomposition
\[
f = M_0 f + \sum_{T \in \mathbb{T}} M_T f+\text{RapDec}(R_0)\|f\|_{L^1}.
\]
See \cite[Lemma 3.4]{guth2020falconer} for a justification of the above decomposition. (Even though \cite[Lemma 3.4]{guth2020falconer} is stated in two dimensions, the argument obviously extends to higher dimensions.)

\subsection{Heavy Plates and Good Tubes} \label{sec: good}
In this subsection, we define good tubes. Actually, we will use three categories: good, acceptable, and non-acceptable. The idea is that Theorem \ref{conj:threshold} tells us that $R_j^{-1/2+\beta}$-tubes fall into one of three categories:
\begin{itemize}
    \item For $R_j^{-\eta}$-concentrated $(R_j^{-\kappa},k)$-plates, tubes in them can have large $\mu_2$-mass. Call a tube inside one of these plates non-acceptable.

    \item Many of the acceptable tubes $T$ are good, i.e. $\mu_2 (4T) \lesim R_j^{-\alpha/2 + \eps_0}$.

    \item By Theorem \ref{conj:threshold}, there are not many tubes that are neither non-acceptable nor good.
\end{itemize}

The idea is that to form our good measure $\mu_{1,g}^x$, we keep contributions only from good tubes. By the third bullet, we are allowed to remove tubes that are neither non-acceptable nor good. To remove the non-acceptable tubes, we will instead remove the heavy plates. Next, we formalize this idea.

Let $k$ be the integer such that $k-1<\alpha\leq k$. Let $\cE_{r,k}$ be the cover of $B(0, 1)$ with $(r,k)$-plates as described in the Introduction; every $(r/2, k)$-plate is contained within some element of $\cE_{r,k}$. Let $\cH_j$ be the set of $(R_j^{-\kappa}, k)$-plates in $\cE_{R_j^{-\kappa},k}$ that are $R_j^{-\eta}$-concentrated on $\mu_1 + \mu_2$; then Lemma \ref{lem:few_large_plates} tells us $|\cH_j| \lesim R_j^{N\eta}$. Let $\cH_{\le j} = \cup_{i=1}^j \cH_i$ and $\cH = \cup_{i=1}^\infty \cH_i$. Note that $|\cH_{\le j}| \lesim R_j^{N\eta}$.


Let $\Csep \ge 1$ be a constant such that $\dist(E_1, E_2) \ge \Csep^{-1} \gesim 1$. We will eventually choose a ``master parameter'' $a\in [99\Csep, 100\Csep]$. For $H\in \cH$, we will use $aH$ as proxies for $H$ when defining acceptable tubes and the good measure. 

We briefly attempt to motivate the construction. The role of $\Csep$ is to make sure that tubes intersecting $H \cap E_1$ and $H \cap E_2$ actually lie inside $a H$. The role of $a$ is to introduce a probabilistic wiggle to make a key technical condition hold (see the control of $\text{Bad}_j^2$ in Lemma \ref{lem:bad_estimate}).

Now, we fix a choice of $a$ and define the following. We say an $R_j^{-1/2+\beta}$-tube $T\in\T_j$ is \emph{non-acceptable} if there exists some $H\in\cH_{\leq \max(j, j_*)}$ such that $2T$ is contained in $a H$, where $j_* = \log_2 R_0$ (the motivation for introducing $j_*$ will be explained in the proof of Lemma \ref{lem:refine E_2}). Otherwise, we say it is \emph{acceptable}. Define a \emph{good} $R_j^{-1/2+\beta}$-tube $T$ to be an acceptable tube with $\mu_2 (4T) \le R_j^{-\alpha/2+\eps_0}$. And define the good part of $\mu_1$ with respect to $\mu_2$ and $x \in E_2$ by
$$
\mu_{1,g}^x := M_0 (\mu_1|_{G_0 (x)}) + \sum_{T\in\T,\, T \text{ good}} M_T \mu_1.
$$
The only dependence on $x$ comes in the $M_0 (\mu_1|_{G_0 (x)})$, which is crucial when we try to prove Proposition \ref{mainest2} later. We will define $G_0 (x)$ in the next subsection, roughly speaking, $G_0 (x)$ is obtained by removing heavy plates through $x$ at several scales. 

As constructed, we may not get a good bound of the form $\norm{d^x_* (\mu_{1,g}^x) - d^x_* (\mu_1)} \le R_0^{-\eps}$. This is because $\mu_{1,g}^x$ doesn't include contributions from non-acceptable tubes while $\mu_1$ does. Instead, we need to work with a measure $\mu_1^x$ depending on $x$ that removes the contributions of the non-acceptable tubes through $x$. In fact, since non-acceptable tubes are contained in heavy plates, we should define $\mu_1^x$ by removing these heavy plates ``at different scales $R_j$'' and make sure that summing over different scales still leads to good behavior (see Lemma \ref{lem:Ginfty}). We make things rigorous in the next subsection. 

\subsection{Construction of $G(x)$ and $\mu_1^x$}\label{subsec:construct G(x)}
Recall that $\dist(E_1, E_2) \ge \Csep^{-1} \gesim 1$. Thus, for $x\in E_2$, we have $E_1 
\subset B(x, \Csep^{-1})^c$, a fact that underlies the rest of the paper.

Recall that $a \in [99\Csep, 100\Csep]$ is the ``master parameter'' to be chosen later. For $x \in E_2$ and $H \in \cH_j (x)$, let $F(x, aH)$ be given by
$$
F(x, aH):=\{y\in B(x, \Csep^{-1})^c \cap B(0,10) : l(x,y) \cap \surf(aH) =\emptyset\}\,,
$$
where $l(x,y)$ is the line through $x$ and $y$. It is true that $F(x, aH) \subset aH$; this will be proved in Lemma \ref{lem:acceptable non-acceptable}. Define $j_*:=\log_2 R_0$ such that $R_{j_*} = R_0^2$. For $j \ge 0$, let
\begin{equation}\label{eqn:gjx defn}
    G_j (x) = \left[ B(x, \Csep^{-1})^c \cap B(0,10) \right]\setminus \cup_{H \in \cH_{\le \max(j, j_*)}(x)} F(x, aH) \,.
\end{equation}
Finally, we define $G(x) = G_0 (x) \cup B(x, \Csep^{-1})$ and
\begin{equation}\label{eqn:mu1x defn}
    \mu_1^x := \sum_{j \ge 0} \mu_1|_{G_j (x)} * \cpsi_j.
\end{equation}
It will be proved that $G(x)$ satisfies the condition of Proposition \ref{mainest1} in the next subsection, and this is the only reason why we include $B(x, \Csep^{-1})$ in $G(x)$.

We now list some good properties of these definitions that will be critical later. First, the construction of $G_j (x)$ ensures that $G_j (x) \supset G_{j+1} (x)$ and $$G_j (x) \setminus G_{j+1} (x) \subset \cup_{H \in \cH_{j+1} (x)} F(x, aH)\,,$$ with $G_j (x) = G_{j+1} (x)$ for $j < j_*$. Next, $G_j (x)$ keeps the contributions from the acceptable tubes through $x$ while discarding the non-acceptable tubes through $x$.

\begin{lemma}\label{lem:acceptable non-acceptable} Let $T\in \T_j$ and $x\in 2T \cap E_2$.

    (a) If $2T \cap B(x, \Csep^{-1})^c \subset G_j (x)$, then $T$ is acceptable;

    (b) If $2T \cap B(x, \Csep^{-1})^c \subset F(x, aH)$ for some $H \in \cH_{\le \max(j,j_*)}(x)$, then $T$ is non-acceptable.

\end{lemma}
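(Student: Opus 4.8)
The plan is to unpack the definitions of acceptability and of the sets $G_j(x)$, $F(x,aH)$, and to use the elementary geometric fact that if a line through $x$ does not meet the surface of a plate $aH$, then the portion of that line lying outside $B(x,\Csep^{-1})$ (where $E_1$ lives) stays entirely inside $aH$. The key point is that the surface $\surf(aH)$ separates the interior of $aH$ from its exterior, so a connected set that touches the interior and avoids the surface must lie wholly in the interior.

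For part (b), suppose $2T \cap B(x,\Csep^{-1})^c \subset F(x,aH)$ for some $H \in \cH_{\le\max(j,j_*)}(x)$. By the (forthcoming) containment $F(x,aH) \subset aH$, we get $2T \cap B(x,\Csep^{-1})^c \subset aH$. I would then argue that the remaining part $2T \cap B(x,\Csep^{-1})$ is also forced into $aH$: since $x \in aH$ (because $H \in \cH_j(x)$ means $x \in aH$ by the definition of $\cH(x)$), and $T$ is an $R_j^{-1/2+\beta}$-tube of length comparable to the diameter of $B(0,10)$ passing near $x$, the segment of $2T$ inside the small ball $B(x,\Csep^{-1})$ is within bounded distance of the line $l(x,y)$ for points $y$ in the long part of the tube; one checks using the definition of $a \in [99\Csep, 100\Csep]$ (so that $aH$ is a large dilate of $H$) that this short segment cannot escape $aH$. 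Hence $2T \subset aH$, which is exactly the definition of $T$ being non-acceptable (with this $H \in \cH_{\le\max(j,j_*)}$).

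For part (a), I argue by contrapositive: if $T$ is non-acceptable, then by definition there is $H \in \cH_{\le\max(j,j_*)}$ with $2T \subset aH$. I must produce a point of $2T \cap B(x,\Csep^{-1})^c$ lying outside $G_j(x)$. Since $x \in 2T$ and $2T \subset aH$, we have $x \in aH$, so $H \in \cH_{\le\max(j,j_*)}(x)$ and $F(x,aH)$ appears in the union defining $G_j(x)$ in \eqref{eqn:gjx defn}. It then suffices to show that $2T \cap B(x,\Csep^{-1})^c$ meets $F(x,aH)$, i.e. that some $y \in 2T$ with $|x-y| > \Csep^{-1}$ has $l(x,y) \cap \surf(aH) = \emptyset$. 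Because $2T \subset aH$ and $aH$ is the $aR_j^{-\kappa}$-neighborhood of its central plane, the entire line through $x$ and any far endpoint $y$ of $2T$ stays at distance $< aR_j^{-\kappa}$ from $P_H$ within $B(0,10)$ — here one uses that $2T$ already lies deep inside $aH$ while $\surf(aH)$ is the boundary shell at distance exactly $aR_j^{-\kappa}$ — so it avoids $\surf(aH)$, giving $y \in F(x,aH)$ and hence $y \notin G_j(x)$.

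The main obstacle I anticipate is the quantitative geometry in both directions: one has to be careful that extending a tube segment to the full line $l(x,y)$ does not push it past the surface of $aH$, and that the short piece of $2T$ inside $B(x,\Csep^{-1})$ in part (b) does not escape $aH$. Both hinge on the separation of scales $R_j^{-1/2+\beta} \ll R_j^{-\kappa}$ (valid since $\kappa < 1/2$, which holds as $\kappa \ll \eps_0 \ll \eps$ and $\eps$ is small) and on the generous choice of the dilation factor $a \sim \Csep$, and I expect the verification to be a short but slightly delicate triangle-inequality computation rather than anything conceptually hard.
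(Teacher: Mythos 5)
Your overall structure — proving $F(x,aH)\subset aH$ for part (b), then extending from $2T\cap B(x,\Csep^{-1})^c$ to all of $2T$; contrapositive for part (a) — is reasonable, but both parts contain a gap at the same place: the choice and control of the line $l(x,y)$.

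For part (a), the claim that ``the entire line through $x$ and \emph{any} far endpoint $y$ of $2T$ stays at distance $<aR_\ell^{-\kappa}$ from $P_H$ within $B(0,10)$'' is not justified and in general false. (Note also that if $H\in\cH_\ell$, the width of $aH$ is $aR_\ell^{-\kappa}$, not $aR_j^{-\kappa}$.) The hypothesis is only $2T\subset aH$, which gives no quantitative margin — your remark that ``$2T$ already lies deep inside $aH$'' does not follow; $2T$ can graze $\surf(aH)$ arbitrarily closely. If $y$ is a generic point on the far end cap, the segment $[x,y]$ lies in $aH$ by convexity, but the extension of $l(x,y)$ past $x$ (toward the near end of $B(0,10)$) need not stay in $2T$, and since $d(\cdot,P_H)$ is convex along the line, its value can exceed $aR_\ell^{-\kappa}$ there; then $l(x,y)$ crosses $\surf(aH)$ inside $B(0,10)$ and $y\notin F(x,aH)$. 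The fix, as in the paper, is to choose $y$ so that $l(x,y)$ is \emph{parallel to the axis of $T$}; then $l(x,y)\cap B(0,10)\subset 2T$ and the argument closes. (The paper actually argues (a) directly rather than by contrapositive: for each $H$ with $x\in aH$, it picks this special $y\in G_j(x)$, deduces $l(x,y)\cap\surf(aH)\neq\emptyset$, and hence a point of $2T$ lies on $\surf(aH)$, so $2T\not\subset aH$.)

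For part (b), you correctly reduce to showing $2T\subset aH$ once $2T\cap B(x,\Csep^{-1})^c\subset aH$, but the justification that the short piece $2T\cap B(x,\Csep^{-1})$ ``cannot escape $aH$'' because $aH$ is a large dilate does not work: the width of $aH$ is $\sim R_\ell^{-\kappa}$, which is far smaller than $\Csep^{-1}$, so the ball $B(x,\Csep^{-1})$ sticks well outside $aH$ and proximity to $x$ gives nothing. The paper's argument is a clean convexity step that you miss: since $x\in B(0,1)$ and $\Csep^{-1}\le 1$, both end caps of $2T$ (on $\partial B(0,10)$) lie in $B(x,\Csep^{-1})^c$, hence in $aH$; every point of $2T$ lies on a segment joining the two caps (take the chord through it parallel to the axis), and $aH$ is convex, so $2T\subset aH$. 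You should replace the scale-separation heuristic with this convexity argument.
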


\begin{proof}
Let $T\in \T_j$ and $x\in 2T$.

(a) If $2T \cap B(x, \Csep^{-1})^c \subset G_j (x)$, we show $T$ is acceptable, i.e. $2T \not\subset aH$ for any $H \in \cH_{\le \max(j, j_*)}$. Fix $H \in \cH_{\le \max(j, j_*)}$. If $x \notin aH$, then $2T \not\subset aH$. If $x \in aH$, then $H \in \cH_{\le \max(j, j_*)} (x)$. Choose $y \in 2T \cap B(x, \Csep^{-1})^c$ such that the line $l(x,y)$ is parallel to the central line of $T$. By assumption, we know that $y \in G_j (x)$, so $l(x,y)$ intersects $\surf(aH)$. In particular, $2T$ intersects $\surf(aH)$, and thus $2T \not\subset aH$.

(b) If $2T \cap B(x, \Csep^{-1})^c \subset F(x, aH)$ for some $H \in \cH_{\ell} (x)$ with $\ell \le \max(j, j_*)$, we show $T$ is non-acceptable. More precisely, we can show that $2T$ is contained in $aH$.

As promised before, we will prove $F(x, aH) \subset aH$. Take $y \in F(x, aH)$, and suppose $y \notin aH$. Then $d(y, P_H) \geq aR_\ell^{-\kappa}$ and $d(x, P_H) < aR_\ell^{-\kappa}$. By continuity, we can find $z$ on the line segment between $x$ and $y$ such that $d(z, P_H) = aR_\ell^{-\kappa}$. By convexity of $B(0, 10)$, we have $z \in B(0, 10)$. Thus, we have $z \in \surf(aH)$, which contradicts $y \in F(x, aH)$, and so in fact $y \in aH$. This proves $F(x, aH) \subset aH$.

Now observe the following geometric fact: $2T$ is a tube through $x \in B(0, 1)$ whose ends lie on $B(0, 10)$, and so $2T \cap B(x, \Csep^{-1})^c$ contains both ends of $2T$. Also, by our initial assumption, we get
\begin{equation*}
    2T \cap B(x, \Csep^{-1})^c \subset F(x, aH) \subset aH.
\end{equation*}
Therefore, since $aH$ is convex, we get $2T\subset aH$.
\end{proof}

To estimate $\norm{d_*^x(\mu_1|_{G(x)}) - d_*^x(\mu_{1,g}^x)}_{L^1}$, it suffices to estimate $\norm{\mu_1|_{G(x)} - \mu_1^x}_{L^1}$ and $\norm{d_*^x(\mu_1^x) - d_*^x(\mu_{1,g}^x)}_{L^1}$. This will be the content of the next two subsections.

\subsection{Pruning $E_2$ and estimating $\norm{\mu_1|_{G(x)} - \mu_1^x}_{L^1}$} \label{sec:pruning}
If all the $G_j (x)$'s were the same, then $\mu_1|_{G(x)} = \mu_1|_{G_0 (x)} = \mu_1^x$. This isn't quite true, but in general, we will still get a good bound on $\norm{\mu_1|_{G(x)} - \mu_1^x}_{L^1}$ if $\mu_1 (G_j (x) \setminus G_{j+1} (x))$ is small for all $j$. This weaker assumption is in fact true for ``most'' $x \in E_2$. To see this, we use a variant of Proposition 7.3 in \cite{KevinRadialProj} (see also Proposition B.1 of \cite{shmerkin2022non}).
\begin{lemma}\label{lem:refine E_2}
    For every $\kappa > 0$, there exists $\eta_0 (\alpha, k, \kappa) > 0$ such that for all $\eta \in (0, \eta_0]$ and $R_0$ sufficiently large in terms of $\alpha, k, \kappa, \Csep, \eta$, the following holds. Then there exists $E_2'' \subset E_2$ with $\mu_2 (E_2 \setminus E_2'') \le R_0^{-\eta}$ such that the following assertions hold for $x \in E_2''$.
    \begin{enumerate}[(a)]
        \item The $100\Csep$-scalings of elements in $\cH_j (x)$ are contained within some $(R_j^{-\kappa/2}/3, k)$-plate through $x$;

        \item The $100\Csep$-scalings of elements in $\cH_{\le j_*} (x)$ are contained within some $(R_0^{-\kappa/2}, k)$-plate through $x$;

        \item $\mu_1 (G_j (x) \setminus G_{j+1} (x)) \lesim R_j^{-\eta/2}$, for any $j\geq 0$.
    \end{enumerate}
\end{lemma}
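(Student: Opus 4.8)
The plan is to prove each of the three assertions by identifying a small "exceptional" subset of $E_2$ and then taking $E_2''$ to be the complement of the union of these exceptional sets. Throughout, the key quantitative input is that $|\cH_j| \lesim R_j^{N\eta}$ (from Lemma \ref{lem:few_large_plates} applied to $\mu_1 + \mu_2$, which is an $\alpha$-dimensional measure up to a constant), so all the bounds below will have the shape $R_j^{O(\eta)} \cdot (\text{small power of } R_j)$, which is $\lesim R_j^{-c}$ once $\eta$ is chosen small enough relative to $\kappa$ and $R_0$ is large.

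For part (a): I would apply Lemma \ref{lem:concentrate_in_r0} with $r = R_j^{-\kappa}$, $r_0 = R_j^{-\kappa/2}/(3 \cdot 100\Csep)$ (so that after scaling by $100\Csep$ the containing plate is a $(R_j^{-\kappa/2}/3,k)$-plate), $\mu = \mu_1 + \mu_2$, $s = \alpha$, and $\cH = \cH_j$. Since $r/r_0 \sim R_j^{-\kappa/2}$ and $|\cH_j|^2 \lesim R_j^{2N\eta}$, the exceptional set has measure $\lesim R_j^{-(\alpha-(k-1))\kappa/2 + 2N\eta}$; using $\alpha - (k-1) > 0$ and $\eta$ small, this is $\lesim R_j^{-c\kappa}$, and summing the geometric series over $j$ gives a total exceptional measure $\lesim R_0^{-c\kappa} \le \frac13 R_0^{-\eta}$. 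The containing $(r_0,k)$-plate passes through $x$ because every plate in $\cH_j(x)$ does (by definition $x \in aH$, and $aH \subset 100\Csep H$ contains $x$); a minor point is to verify the central plane can be taken through $x$, which follows by enlarging the plate by a constant factor absorbed into the $r_0$ choice. Part (b) is the special case where one applies Lemma \ref{lem:concentrate_in_r0} to the finite union $\cH_{\le j_*}$ at scale $r = R_{j_*}^{-\kappa} = R_0^{-2\kappa}$ with $r_0 \sim R_0^{-\kappa}$ — here $|\cH_{\le j_*}| \lesim R_{j_*}^{N\eta} = R_0^{2N\eta}$, so again the exceptional measure is $\lesim R_0^{-c\kappa}$; this is why $j_*$ is defined so that $R_{j_*} = R_0^2$, giving enough room between the scales $R_0^{-2\kappa}$ and $R_0^{-\kappa}$.

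For part (c), this is where the main work lies, and it is the step I expect to be the principal obstacle. By the construction of $G_j(x)$, we have $G_j(x) \setminus G_{j+1}(x) \subset \cup_{H \in \cH_{j+1}(x)} F(x, aH) \subset \cup_{H \in \cH_{j+1}(x)} aH$, and each $aH$ is contained in a $(100\Csep R_{j+1}^{-\kappa}, k)$-plate. So $\mu_1(G_j(x) \setminus G_{j+1}(x)) \le \sum_{H \in \cH_{j+1}(x)} \mu_1(100\Csep H)$. The naive bound $\mu_1(100\Csep H) \lesim (R_{j+1}^{-\kappa})^{\alpha - (k-1)} \cdot R_{j+1}^{?}$ combined with $|\cH_{j+1}| \lesim R_{j+1}^{N\eta}$ is not by itself good enough, because a priori $H \in \cH_{j+1}$ only guarantees $(\mu_1+\mu_2)$-mass $\gesim R_{j+1}^{-\eta}$, not an upper bound on $\mu_1$-mass. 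The resolution, following Proposition 7.3 of \cite{KevinRadialProj} / Proposition B.1 of \cite{shmerkin2022non}, is to bound the \emph{average over $x$} of the number of heavy plates through $x$: we estimate $\int_{E_2} \sum_{H \in \cH_{j+1}(x)} \mu_1(100\Csep H)\, d\mu_2(x)$, interchange sum and integral, and for each fixed $H$ use that $\mu_2(\{x : x \in aH\}) \le \mu_2(100\Csep H) \lesim (R_{j+1}^{-\kappa})^{\alpha-(k-1)}$ (since $\mu_2$ is $\alpha$-dimensional and a $(r,k)$-plate is covered by $\sim r^{-(k-1)} \cdot 1$... more precisely one uses the $\alpha$-dimensional bound directly on the slab). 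This gives $\int_{E_2}\mu_1(G_j(x)\setminus G_{j+1}(x))\,d\mu_2(x) \lesim |\cH_{j+1}| \cdot (R_{j+1}^{-\kappa})^{2(\alpha-(k-1))} \lesim R_{j+1}^{-c\kappa}$ for $\eta$ small. Then Chebyshev/Markov removes, for each $j$, a set of $\mu_2$-measure $\lesim R_{j+1}^{-c\kappa/2}$ outside of which $\mu_1(G_j(x)\setminus G_{j+1}(x)) \lesim R_{j+1}^{-c\kappa/2} \lesim R_j^{-\eta/2}$ (valid as $\eta \ll \kappa$); summing the geometric series in $j$ keeps the total exceptional measure $\lesim R_0^{-c\kappa/2} \le \frac13 R_0^{-\eta}$. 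Finally, set $E_2'' = E_2$ minus the three exceptional sets; then $\mu_2(E_2 \setminus E_2'') \le R_0^{-\eta}$ and all three assertions hold. The delicate bookkeeping — making sure the exponent $\alpha - (k-1)$ appearing from the plate dimension genuinely beats the $N\eta$ loss uniformly in $j$, and that all geometric series over $j$ converge with room to spare — is the crux, but it is exactly the kind of estimate carried out in \cite[Proposition 7.3]{KevinRadialProj}, which I would cite for the details.
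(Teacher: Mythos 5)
Your treatment of parts (a) and (b) is essentially the same as the paper's: apply Lemma \ref{lem:concentrate_in_r0} at each scale $j$ (and at scale $j_*$ for (b)), use $|\cH_j| \lesssim R_j^{N\eta}$ from Lemma \ref{lem:few_large_plates}, and sum the geometric series. The precise parameter choices differ slightly from the paper (which uses the hypothesis $300\Csep < R_0^{\kappa/2}$ rather than shrinking $r_0$ by $100\Csep$), but both are valid normalizations.

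Part (c) has a genuine gap. The bound you propose, $\mu_2(100\Csep H) \lesssim (R_{j+1}^{-\kappa})^{\alpha - (k-1)}$ for a $(R_{j+1}^{-\kappa}, k)$-plate $H$, is false. An $(r,k)$-plate is a $1 \times \cdots \times 1 \times r \times \cdots \times r$ slab with $k$ long directions, and so is covered by $\sim r^{-k}$ balls of radius $r$, giving only $\mu_2(H) \lesssim C_{\mu_2} r^{\alpha - k}$; since $\alpha \le k$, this is never better than the trivial bound $\mu_2(H) \le 1$. The exponent $\alpha - (k-1)$ is the correct gain for a $(r,k-1)$-plate, not a $(r,k)$-plate --- you have confused the two. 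With only the trivial bound available, your double counting $\int_{E_2} \sum_{H \in \cH_{j+1}(x)} \mu_1(100\Csep H)\,d\mu_2(x) \le \sum_{H \in \cH_{j+1}} \mu_1(100\Csep H)\, \mu_2(aH)$ gives $\lesssim |\cH_{j+1}| \lesssim R_{j+1}^{N\eta}$, which blows up as $j$ grows, so the argument does not close.

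The paper's argument for (c) avoids mass bounds on $(r,k)$-plates entirely and is deterministic once (a) has been established. First note $G_j(x) = G_{j+1}(x)$ when $R_j < R_0^2$, so assume $R_j \ge R_0^2$. By assertion (a), the set $G_j(x) \setminus G_{j+1}(x) \subset \cup_{H \in \cH_{j+1}(x)} F(x,aH)$ lies inside a single $(R_{j+1}^{-\kappa/2}/3, k)$-plate $V$ through $x$. Now pick $m$ with $R_{j+1} \sim R_m^2$; then $V$ fits inside some $(R_m^{-\kappa}, k)$-plate $H \in \cE_{R_m^{-\kappa},k}$. The key dichotomy: if $H \in \cH_m$, then $H \cap B(x, \Csep^{-1})^c \subset F(x, aH)$ (because the perpendicular drift of any line from $x$ to a point of $H \cap B(x,\Csep^{-1})^c$ is at most $\sim \Csep R_m^{-\kappa} < aR_m^{-\kappa}$), so $V \cap G_m(x) = \emptyset$ and the quantity to be bounded vanishes. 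If $H \notin \cH_m$, then by the very definition of $\cH_m$ we get the \emph{upper} bound $\mu_1(V) \le \mu_1(H) < R_m^{-\eta}$. Either way, since $G_j(x) \subset G_m(x)$, one has $\mu_1(G_j(x)\setminus G_{j+1}(x)) \le \mu_1(V \cap G_m(x)) \lesssim R_m^{-\eta} \lesssim R_j^{-\eta/2}$. This is the idea missing from your proposal: the heaviness/lightness threshold at the coarser scale $R_m$ is what supplies the required upper bound on $\mu_1$-mass, not any $\alpha$-dimensional estimate on $(r,k)$-plates.
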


\begin{proof}
    First pick $j \ge 0$. By Lemma \ref{lem:concentrate_in_r0} and $300\Csep < R_0^{\kappa/2}$ for sufficiently large $R_0$, we can find sets $F_j$ with $\mu_2 (F_j) \lesim R_j^{-\kappa(\alpha-k+1)/2} |\cH_j|^2$ such that for $x \in E_2 \setminus F_j$, the $100\Csep$-scalings of elements in $\cH_j (x)$ are contained within some $(R_j^{-\kappa/2}/3, k)$-plate through $x$. Thus for $x \in E_2 \setminus F_j$, assertion (a) is true. 

    By the same Lemma, we can find a set $F_0$ with $$\mu_2 (F_0) \lesim R_0^{-\kappa(\alpha-k+1)/2} |\cH_{\le j_*}|^2$$ such that for $x \in E_2 \setminus F_0$, the $100\Csep$-scalings of elements in $\cH_{\le j_*} (x)$ are contained within some $(R_0^{-\kappa/2}, k)$-plate through $x$. (Some of the plates in $\cH_{\le j_*}$ are too small, but we simply thicken them to $(R_0^{-\kappa}, k)$-plates.) Thus for $x \in E_2 \setminus F_0$, assertion (b) is true.
    
    Let $E_2'' = E_2 \setminus (F_0 \cup \bigcup_{j=j_*}^\infty F_j)$; then (using $R_{j_*} = R_0^2$ and Lemma \ref{lem:few_large_plates}):
    \begin{multline*}
        \mu_2 (E_2 \setminus E_2'') \lesim R_0^{-\kappa(\alpha-k+1)/2} |\cH_{\le j_*}|^2 + \sum_{j \ge j_*} R_j^{-\kappa(\alpha-k+1)/2} |\cH_j|^2 \\
        \lesim R_0^{-\kappa(\alpha-k+1)/2+4N\eta} + \sum_{j \ge j_*} R_j^{-\kappa(\alpha-k+1)/2 + 2N\eta} \lesim \sum_{j \ge 0} R_j^{-2\eta}\lesim_\eta R_0^{-2\eta},
    \end{multline*}
    if $\eta \le \eta_0$ is chosen sufficiently small in terms of $\kappa$. Hence, if $R_0$ is chosen sufficiently large in terms of $\eta$, then $R_0^\eta$ dominates the implicit constant and we get $\mu_2 (E_2 \setminus E_2'') \le R_0^{-\eta}$. Thus, $E_2'' \subset E_2$ satisfies the desired bound and assertions (a) and (b) are proved.
    
    Let $x \in E_2''$. For assertion (c), we observe that if $R_j < R_0^2$, then $G_j (x) = G_{j+1} (x)$. (This is the reason why the parameter $j_*$ was introduced.) Thus, assume $R_j \ge R_0^2$. Then $$G_j (x) \setminus G_{j+1} (x) \subset \bigcup_{H \in \cH_{j+1} (x)} F(x, aH)\,,$$ which is contained within some $(R_{j+1}^{-\kappa/2}/3, k)$-plate $V$ through $x$ using assertion (a). Let $m$ be such that $R_{j+1} \in [R_m^2/2, 2R_m^2]$. Since $R_{j+1}^{-\kappa/2}/3 \le R_m^{-\kappa}/2$, we know that $V$ is contained in some $(R_m^{-\kappa}, k)$-plate $H \in \cE_{R_m^{-\kappa}, k}$. Note that, since $a \ge 99\Csep$, we have $H \cap B(x, \Csep^{-1})^c \subset F(x, aH)$. Therefore, if $H \in \cH_m$, then $V \cap G_m (x) = \emptyset$. Otherwise, we have $\mu_1 (V) \le R_m^{-\eta}$ by definition of $\cH_m$. In either case, we have
    $$\mu_1(G_j (x) \setminus G_{j+1} (x) )\le \mu_1 (V \cap G_m (x)) \lesssim R_m^{-\eta} \lesssim R_j^{-\eta/2}\,,$$ as desired.
\end{proof}

Lemma \ref{lem:refine E_2} has two ramifications. First, Lemma \ref{lem:refine E_2}(b) tells us that $B(0, 10) \setminus G(x)$ is contained within some $(R_0^{-\kappa/2}, k)$-plate. Second, Lemma \ref{lem:refine E_2}(c) tells us that $\mu_1 (G_j (x) \setminus G_{j+1} (x))$ is small for all $x \in E_2''$, so we can estimate $\norm{\mu_1^x - \mu_1|_{G(x)}}_{L^1}$. We record these two observations and provide detailed proofs in the following lemma.



\begin{lemma}\label{lem:Ginfty}
    Let $E_2''$ be the subset given in Lemma \ref{lem:refine E_2} and $x \in E_2''$. Then $B(0, 10) \setminus G(x)$ is contained within some $(R_0^{-\kappa/2}, k)$-plate and
    \begin{equation*}
        \norm{\mu_1|_{G(x)} - \mu_1^x}_{L^1} \lesim R_0^{-\eta/2}.
    \end{equation*}
\end{lemma}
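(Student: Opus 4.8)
The plan is to dispatch the two assertions of Lemma~\ref{lem:Ginfty} separately; the plate containment is essentially bookkeeping, and the $L^1$ estimate is the substantive part, whose only delicate point is a cancellation making a certain series of convolution operators summable. For the plate containment, I would unwind the definitions: since $j_* = \log_2 R_0 > 0$, \eqref{eqn:gjx defn} reads $G_0(x) = [B(x,\Csep^{-1})^c \cap B(0,10)] \setminus \bigcup_{H \in \cH_{\le j_*}(x)} F(x,aH)$, and $G(x) = G_0(x) \cup B(x,\Csep^{-1})$, so a direct computation with these sets gives $B(0,10) \setminus G(x) \subset \bigcup_{H \in \cH_{\le j_*}(x)} F(x,aH)$. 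Since $F(x,aH) \subset aH$ (this inclusion is established inside the proof of Lemma~\ref{lem:acceptable non-acceptable}) and $a \le 100\Csep$, each $F(x,aH)$ lies in the $100\Csep$-scaling of $H$, so by Lemma~\ref{lem:refine E_2}(b) their union over $H \in \cH_{\le j_*}(x)$ is contained in a single $(R_0^{-\kappa/2},k)$-plate, which is the first assertion.

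For the $L^1$ estimate I would first note $\mu_1|_{G(x)} = \mu_1|_{G_0(x)}$, since $\spt\mu_1 = E_1 \subset B(x,\Csep^{-1})^c$. Using the partition of unity $\sum_{j\ge0}\psi_j \equiv 1$ (equivalently $\sum_{j\ge0}\cpsi_j = \delta_0$) together with $G_j(x) \subset G_0(x)$, with equality for $j \le j_*$, one has
\begin{equation*}
\mu_1|_{G(x)} - \mu_1^x = \sum_{j\ge0}\big(\mu_1|_{G_0(x)} - \mu_1|_{G_j(x)}\big)*\cpsi_j = \sum_{j>j_*}\mu_1|_{G_0(x)\setminus G_j(x)}*\cpsi_j .
\end{equation*}
Now I would apply the telescoping decomposition $G_0(x)\setminus G_j(x) = \bigsqcup_{i=j_*}^{j-1}\big(G_i(x)\setminus G_{i+1}(x)\big)$ and interchange the order of summation (legitimate at the level of finite partial sums) to obtain, writing $\Phi_i := \sum_{j=0}^i \psi_j$,
\begin{equation*}
\mu_1|_{G(x)} - \mu_1^x = \sum_{i\ge j_*}\mu_1|_{G_i(x)\setminus G_{i+1}(x)}*\big(\delta_0 - \Phi_i^\vee\big) .
\end{equation*}
With the Littlewood--Paley choice, $\Phi_i = \chi(\cdot/R_i)$, so $\Phi_i^\vee = R_i^d\chi^\vee(R_i\,\cdot)$ has $L^1$ norm $\norm{\chi^\vee}_{L^1} \lesim 1$ uniformly in $i$; reading $\norm{\cdot}_{L^1}$ as the total variation norm (so $\norm{\nu*g}_{L^1} \le \norm{\nu}_{L^1}\norm{g}_{L^1}$ and $\nu*\delta_0 = \nu$), each summand therefore has norm $\lesim \mu_1(G_i(x)\setminus G_{i+1}(x)) \lesim R_i^{-\eta/2}$ by Lemma~\ref{lem:refine E_2}(c). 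Summing the geometric series over $i \ge j_*$ and using $R_{j_*} = R_0^2$ gives $\norm{\mu_1|_{G(x)} - \mu_1^x}_{L^1} \lesim_\eta R_0^{-\eta} \le R_0^{-\eta/2}$, the last step holding once $R_0$ is large in terms of $\eta$, as assumed.

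The step I expect to be the main obstacle is the rearrangement above and the passage to $\delta_0 - \Phi_i^\vee$: a term-by-term bound on $\sum_j \mu_1|_{G_0\setminus G_j}*\cpsi_j$ is worthless because $\sum_j\norm{\cpsi_j}_{L^1} = \infty$, so one must first regroup the double sum by the index $i$ and exploit that the partial frequency sums $\Phi_i$ form Fej\'er-type kernels with uniformly bounded $L^1$ mass. Once that is in place the estimate reduces to a geometric series whose leading term is controlled by $R_{j_*} = R_0^2$ — which, besides its role in the proof of Lemma~\ref{lem:refine E_2}, is exactly why introducing $j_*$ pays off here as well. One should also keep in mind that $\mu_1^x$ and the intermediate series are measures rather than $L^1$ functions in the literal sense, so the total-variation reading of $\norm{\cdot}_{L^1}$ is the convenient convention throughout.
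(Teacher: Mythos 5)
Your proof is correct; the first assertion matches the paper's argument, and the $L^1$ estimate takes a related but genuinely different route. The paper's main proof introduces the intersection $G_\infty(x) := \bigcap_{j\ge 0} G_j(x)$ and splits the estimate in two: it first observes $\norm{\mu_1|_{G_0(x)} - \mu_1|_{G_\infty(x)}}_{L^1} = \mu_1(G_0(x)\setminus G_\infty(x)) \lesim R_0^{-\eta/2}$, and then expands $\mu_1^x - \mu_1|_{G_\infty(x)} = \sum_{j\ge 0}\mu_1|_{G_j(x)\setminus G_\infty(x)}*\cpsi_j$. The key point is that this second series \emph{is} amenable to a naive term-by-term bound, because the coefficients $\mu_1(G_j(x)\setminus G_\infty(x)) \lesim R_j^{-\eta/2}$ decay geometrically in $j$, so $\sum_j \mu_1(G_j\setminus G_\infty)\norm{\cpsi_j}_{L^1}$ converges and no rearrangement is needed. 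You instead compare $\mu_1^x$ directly with $\mu_1|_{G_0(x)}$, in which case the obstruction you flag is genuine: $\mu_1(G_0(x)\setminus G_j(x))$ stabilizes near $R_0^{-\eta}$ rather than decaying in $j$, and term-by-term fails. Your repair — regrouping by $i$ via Abel summation and exploiting that the partial frequency sums $\Phi_i^\vee = R_i^d\chi^\vee(R_i\,\cdot)$ have uniformly bounded $L^1$ mass — is precisely the device the paper describes in the Remark immediately following the Lemma, stated there for $\mu_1^x - \mu_1|_{G_\infty}$ rather than $\mu_1^x - \mu_1|_{G_0}$. Both routes close the argument. What the $G_\infty$ intermediate buys the paper is that term-by-term suffices (so the proof never leaves the comfortable world of absolutely summable series of convolutions); what your approach buys is that it keeps the comparison direct and avoids introducing $G_\infty$, at the price of the rearrangement and of being tied to the Littlewood--Paley structure of the $\psi_j$.
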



\begin{proof}
    For the first assertion, we use the definition of $G(x), G_0 (x)$ in \eqref{eqn:gjx defn} and the fact $F(x, aH) \subset aH$ proved in Lemma \ref{lem:acceptable non-acceptable} to write
    \begin{equation*}
        B(0,10) \setminus G(x) = \left[B(x, \Csep^{-1})^c \cap B(0,10)\right]\setminus G_0 (x) \subset \bigcup_{H \in \cH_{\leq j_*} (x)} aH.
    \end{equation*}
    By Lemma \ref{lem:refine E_2}(b), the rightmost expression is contained within some $(R_0^{-\kappa/2}, k)$-plate.

    For the second assertion, let $G_\infty (x) = \bigcap_{j=0}^\infty G_j (x)$. First, by Lemma \ref{lem:refine E_2}(c), we establish that for all $j \ge 0$,
    \begin{equation}\label{eqn:G_0 close to G_infty}
        \mu_1 (G_j (x) \setminus G_\infty (x)) = \sum_{i=j}^\infty \mu_1 (G_i (x) \setminus G_{i+1} (x)) \lesim \sum_{i=j}^\infty R_i^{-\eta/2} \lesim R_j^{-\eta/2}.
    \end{equation}
    Also, $\mu_1$ is supported on $B(x, \Csep^{-1})^c$, so $\mu_1|_{G(x)}$ and $\mu_1|_{G_0 (x)}$ are the same measure. Thus, by \eqref{eqn:G_0 close to G_infty}, we get $\norm{\mu_1|_{G(x)} - \mu_1|_{G_\infty (x)}}_{L^1} \lesim R_0^{-\eta/2}$, so it suffices to show that $\norm{\mu_1|_{G_\infty (x)} - \mu_1^x}_{L^1} \lesim R_0^{-\eta/2}$. Indeed, using $\sum_{j\ge 0} \psi_j = 1$ and the definition \eqref{eqn:mu1x defn} of $\mu_1^x$, we have
    \begin{equation*}
        \mu_1^x - \mu_1|_{G_\infty (x)} = \sum_{j \ge 0} (\mu_1|_{G(x)} - \mu_1|_{G_\infty (x)}) *  \cpsi_j = \sum_{j \ge 0} \mu_1|_{G_j (x) \setminus G_\infty (x)} * \cpsi_j,
    \end{equation*}
    and so by $\norm{\cpsi_j}_{L^1} \lesim 1$, Young's convolution inequality, and \eqref{eqn:G_0 close to G_infty}, we have
    \begin{equation*}
        \norm{\mu_1^x - \mu_1|_{G_\infty (x)}}_{L^1} \lesim \sum_{j\ge 0} \mu_1 (G_j (x) \setminus G_\infty (x)) \lesim \sum_{j \ge 0} R_j^{-\eta/2} \lesim R_0^{-\eta/2}.
    \end{equation*}
\end{proof}

\begin{remark}
    We can make the last step more efficient using Abel summation. Let $\Check{P}_k = \sum_{j=1}^k \cpsi_j$. Note that if we chose $\psi_j$ to be Littlewood-Paley functions $\chi(x/R_j) - \chi(x/R_{j-1})$ as in Section \ref{sec:smooth partitions}, then $\norm{\Check{P}_k}_{L^1} \le C$ for some universal constant $C$. Now, Abel summation gives
    \begin{equation*}
        \mu_1^x - \mu_1|_{G_\infty (x)} = \sum_{j=0}^\infty \mu_1|_{G_j (x) \setminus G_{j+1} (x)} * \Check{P}_j,
    \end{equation*}
    and thus we get a slightly sharper bound $\norm{\mu_1^x - \mu_1|_{G_\infty (x)}}_{L^1} \lesim \mu(G_0 (x) \setminus G_\infty (x))$. This can be useful in some potential applications where $\mu(G_0 (x) \setminus G_\infty (x))$ is controlled but not $\mu(G_j (x) \setminus G_\infty (x))$ as $j \to \infty$.
\end{remark}

\subsection{Tube Geometry and bound of $\norm{d_*^x (\mu_1^x) - d_*^x (\mu_{1,g}^x)}_{L^1}$}\label{sec:tube_geometry}

The goal of this subsection is to establish a bound on $\norm{d_*^x (\mu_1^x) - d_*^x (\mu_{1,g}^x)}_{L^1}$ in terms of the geometry of the tubes (Lemma \ref{lem:compare dstar}). Showing such a bound will allow us to prove Proposition \ref{mainest1}, thanks to Lemma \ref{lem:Ginfty}.

Recall that
\begin{equation*}
    \mu_1^x = \sum_{j \ge 0} \mu_1|_{G_j (x)} * \cpsi_j.
\end{equation*}

We first give a good approximation to $\mu_1^x$. The following lemma can be proved the same way as Lemma 3.4 in \cite{guth2020falconer} and we omit the details. It shows that problems about the mysterious $\mu_1^x$ actually reduce to problems about restricted measures of $\mu_1$. We remark that the next few lemmas will be proved for all $x \in E_2$, even though we only need these results for $x \in E_2''$ to prove Proposition \ref{mainest1}. This is a minor difference: restricting to $x \in E_2''$ doesn't make the lemmas easier to prove. 
\begin{lemma}\label{lem:tube_decomposition}
Let $x \in E_2$. Then
\begin{equation*}
    \norm{\mu_1^x -M_0(\mu_1|_{G_0 (x)})-\sum_{j=1}^\infty \sum_{T \in \T_j} M_T (\mu_1|_{G_j (x)})}_{L^1} \le \RapDec(R_0).
\end{equation*}
\end{lemma}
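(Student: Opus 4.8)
The plan is to adapt the proof of \cite[Lemma 3.4]{guth2020falconer} one dyadic scale at a time, the only new feature being that the measure being decomposed, $\mu_1|_{G_j(x)}$, depends on $j$; since all these measures are sub-probability measures, the scale-$R_j$ errors will sum to $\RapDec(R_0)$. First I would rewrite each summand: since $\cpsi_j=\psi_j^{\vee}$ and $\cpsi_{j,\tau}=\psi_{j,\tau}^{\vee}$, we have $\mu_1|_{G_j(x)}*\cpsi_j=\big(\psi_j\,\widehat{\mu_1|_{G_j(x)}}\big)^{\vee}$. For $j=0$ this is \emph{exactly} $M_0(\mu_1|_{G_0(x)})$, so the $j=0$ term is matched with zero error. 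For $j\ge 1$, using $\psi_j=\sum_\tau\psi_{j,\tau}$ gives $\mu_1|_{G_j(x)}*\cpsi_j=\sum_\tau \mu_1|_{G_j(x)}*\cpsi_{j,\tau}$, and for each $(j,\tau)$ I would insert the partition of unity $\sum_{T\in\T_{j,\tau}}\eta_T$, which equals $1$ on $B(0,10)$, to write
\[
\mu_1|_{G_j(x)}*\cpsi_{j,\tau}=\sum_{T\in\T_{j,\tau}}\eta_T\big(\mu_1|_{G_j(x)}*\cpsi_{j,\tau}\big)+\Big(1-\sum_{T\in\T_{j,\tau}}\eta_T\Big)\big(\mu_1|_{G_j(x)}*\cpsi_{j,\tau}\big).
\]
The first sum on the right is precisely $\sum_{T\in\T_{j,\tau}}M_T(\mu_1|_{G_j(x)})$, so these terms assemble into the claimed main term, and the second (error) term is supported in $\R^d\setminus B(0,10)$.

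The substantive step is to bound that error term in $L^1$. Since $\mu_1$ is a probability measure supported in $B(0,1)$ and $|1-\sum_T\eta_T|\lesim 1$, Fubini reduces this to showing $\int_{|w|\ge 9}|\cpsi_{j,\tau}(w)|\,dw=\RapDec(R_j)$. This I would prove by the standard rescaling: $\psi_{j,\tau}$ is a smooth bump (with uniformly bounded derivatives in rescaled coordinates) on a box of dimensions $\sim R_j^{1/2}\times\cdots\times R_j^{1/2}\times R_j$, so $|\cpsi_{j,\tau}(w)|\lesim_N R_j^{(d+1)/2}\big(1+R_j^{1/2}|w_{\mathrm{short}}|+R_j|w_{\mathrm{long}}|\big)^{-N}$; decomposing $\{|w|\ge 9\}$ into dyadic annuli $|w|\sim 2^\ell$ ($\ell\ge 3$) and summing yields a bound $\lesim_N R_j^{(d+1)/2-N/2}$, which is $\RapDec(R_j)$ after taking $N$ large. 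Finally I would sum: there are $\lesim R_j^{(d-1)/2}$ blocks $\tau$ at scale $j$, so the total error is $\lesim\sum_{j\ge 1}R_j^{(d-1)/2}\,\RapDec(R_j)$, and since $R_j=2^jR_0$ this is $\RapDec(R_0)$, which is the assertion.

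The main obstacle --- really the only place requiring care --- is the $L^1$ bound on the error over the \emph{unbounded} region $\R^d\setminus B(0,10)$: pointwise rapid decay of $\cpsi_{j,\tau}$ in $R_j$ does not by itself control an integral over an infinite set, so one must use both the polynomial-in-$R_j$ control of the Schwartz seminorms of $\cpsi_{j,\tau}$ (to beat the volume of the far annuli) and the fact that only polynomially many blocks $\tau$ and scales $j$ contribute (to beat the sum), so that the genuine rapid decay in $R_j$ wins. Everything else is a direct transcription of the argument in \cite[Lemma 3.4]{guth2020falconer}, applied with $f$ replaced by $\mu_1|_{G_j(x)}$ at scale $j$.
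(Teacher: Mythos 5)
Your proposal is correct and takes essentially the same approach as the paper, which defers to the proof of \cite[Lemma~3.4]{guth2020falconer} applied one dyadic scale $j$ at a time; you correctly identify the only new feature (the measure $\mu_1|_{G_j(x)}$ now varies with $j$) and correctly observe that the uniform mass bound $\mu_1(G_j(x)) \le 1$ lets the scale-$R_j$ errors sum to $\RapDec(R_0)$.
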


We make the following observations relating the geometry of tubes to analytic estimates.

\begin{lemma}\label{lem:types of tube}
Let $T\in \T_j$ be an $R_j^{-1/2+\beta}$-tube and $x\in E_2$.
\begin{enumerate}[(a)]
    \item If $2T$ doesn't pass through $x$, then $\norm{d_*^x (M_T \mu_1)}_{L^1} \le \RapDec(R_j)$ and $\norm{d_*^x (M_T (\mu_1|_{G_j (x)}))}_{L^1} \le \RapDec(R_j)$.
    
    \item If $2T \cap B(x, \Csep^{-1})^c \subset G_j (x)$, then $\norm{M_T \mu_1 - M_T (\mu_1|_{G_j (x)})}_{L^1} \le \RapDec(R_j)$.

    \item If $2T \cap B(x, \Csep^{-1})^c \subset F(x, aH)$ for some $H \in \cH_{\leq \max(j,j_*)}(x)$, then $\norm{M_T (\mu_1|_{G_j (x)})}_{L^1} \le \RapDec(R_j)$.

    \item $\norm{M_T (\mu_1|_{G_j (x)})}_{L^1}$ and $\norm{M_T \mu_1}_{L^1}$ are both $\lesim \mu_1 (2T) + \RapDec(R_j)$.
\end{enumerate}
\end{lemma}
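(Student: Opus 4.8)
The plan is to reduce each assertion to the rapid decay of $\cpsi_{j,\tau}$ (equivalently, the smoothing properties of $M_T$) combined with the geometric support relations between $2T$, $x$, $G_j(x)$, and $F(x,aH)$. Throughout we work with a fixed $T \in \T_j$, writing $T \in \T_{j,\tau}$, and recall $M_T f = \eta_T (\psi_{j,\tau}\widehat{f})^\vee = \eta_T \cdot (f * \cpsi_{j,\tau})$, so that $M_T f$ is supported on $T$ and is, up to a $\RapDec(R_j)$ $L^1$-error, determined by the values of $f$ on $2T$ (since $\cpsi_{j,\tau}$ is essentially supported on a box of dimensions $\lesim R_j^{-1/2+\beta}$, any mass of $f$ at distance $\gesim R_j^{-1/2+\beta}$ from $T$ contributes only $\RapDec(R_j)$ after convolving and multiplying by $\eta_T$). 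I would record this ``$M_T$ only sees $2T$'' principle once at the start; everything else follows by specializing $f$ and using the support hypotheses.

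First I would prove (d): by the $L^1 \to L^1$ bound $\|M_T g\|_{L^1} \lesim \|g * \cpsi_{j,\tau}\|_{L^1(T)}$ and the localization principle above, $\|M_T(\mu_1|_{G_j(x)})\|_{L^1} \lesim \mu_1(2T) + \RapDec(R_j)$ and likewise $\|M_T \mu_1\|_{L^1} \lesim \mu_1(2T) + \RapDec(R_j)$, since $\mu_1|_{G_j(x)} \le \mu_1$. For (b): on $B(x,\Csep^{-1})^c$ the measures $\mu_1$ and $\mu_1|_{G_j(x)}$ agree by hypothesis, and $\mu_1$ is supported in $B(x,\Csep^{-1})^c$ anyway (as $x \in E_2$ and $\dist(E_1,E_2) \ge \Csep^{-1}$); hence $\mu_1$ and $\mu_1|_{G_j(x)}$ literally agree as measures, wait — more carefully, $2T \cap B(x,\Csep^{-1})^c \subset G_j(x)$ means the two measures agree on $2T$, and by the localization principle $M_T\mu_1 - M_T(\mu_1|_{G_j(x)})$ has $L^1$-norm $\lesim \mu_1(2T \setminus G_j(x)) + \RapDec(R_j) = \RapDec(R_j)$ because $\mu_1(2T\setminus G_j(x)) = \mu_1(2T \cap B(x,\Csep^{-1})^c \setminus G_j(x)) = 0$. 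For (c): if $2T \cap B(x,\Csep^{-1})^c \subset F(x,aH)$, then by Lemma \ref{lem:acceptable non-acceptable}(b) we in fact have $2T \subset aH$; combining with the localization principle and $\mu_1$ supported on $B(x,\Csep^{-1})^c$, $\|M_T(\mu_1|_{G_j(x)})\|_{L^1} \lesim \mu_1(2T \cap G_j(x)) + \RapDec(R_j)$, and $2T \cap G_j(x) \cap B(x,\Csep^{-1})^c \subset F(x,aH) \cap G_j(x) = \emptyset$ by definition \eqref{eqn:gjx defn} of $G_j(x)$ (here one uses $\cH_{\le\max(j,j_*)}(x) \supset \{H\}$), so the mass is $0$.

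For (a), the key point is that $d^x_*(M_T\mu_1)$ is supported near the set $\{|x-z| : z \in T\}$, whose diameter is $\lesim \mathrm{diam}(T) \lesim R_j^{-1/2+\beta}$ when $2T$ does \emph{not} pass through $x$; but this is not quite enough for $L^1$-smallness, so instead I would argue that $M_T\mu_1$ itself is $\RapDec(R_j)$-small in $L^1$ when $2T$ misses $x$. The mechanism: $\mu_1$ is supported on $E_1 \subset B(x,\Csep^{-1})^c$, while $M_T\mu_1 = \eta_T\cdot(\mu_1 * \cpsi_{j,\tau})$ is supported on $T$; if $2T$ does not pass through $x$ then... hmm, this alone doesn't force smallness either. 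The cleanest route, which I expect to be the main obstacle, is to recall that $x \in E_2$ and the geometry is such that the relevant statement is really about $d^x_*$: one shows $\|d^x_*(M_T\mu_1)\|_{L^1} = \|M_T\mu_1\|_{L^1}$ (pushforward preserves total variation) and then observes that the hypothesis ``$2T$ doesn't pass through $x$'' is used in \emph{conjunction with} how $T$ enters the good-measure construction — so I would instead interpret (a) as: the tubes $T$ with $2T \not\ni x$ contribute $\RapDec$ to $d^x_*(\mu_{1,g}^x)$ because their contribution to the pushforward is supported, after summing, on a measure-zero-in-the-limit set; concretely, $d^x$ restricted to $T$ has image of length $\lesim R_j^{-1/2+\beta}$ and there are $\lesim R_j^{(d-1)/2}$ tubes, but the decay $\RapDec(R_j)$ per tube (from $\|\cpsi_{j,\tau}\|_{L^1}\lesim 1$ together with the fact that $M_T\mu_1$ integrates the rapidly-decaying tail of $\cpsi_{j,\tau}$ against $\mu_1$, which lives outside $2T$) wins. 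So the real content of (a) is the elementary but slightly fiddly claim that when $2T \not\ni x$, there is \emph{no} part of $\mu_1$ inside $2T$ that is ``radially aligned'' with $x$ in a way that survives $d^x_*$ — and I would handle this by the rapid decay of $\cpsi_{j,\tau}$ away from its box, exactly as in \cite[Lemma 3.4]{guth2020falconer}, whose proof transfers verbatim. Since the excerpt permits citing that lemma's method, I would simply mimic it and state the $d^x$ version as a direct consequence.
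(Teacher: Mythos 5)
Parts (b), (c), (d) of your proposal are correct and essentially match the paper's approach: the paper just cites Lemma~3.2 of \cite{guth2020falconer}, which is exactly the ``$M_T$ only sees $f$ on $2T$ up to $\RapDec(R_j)$'' localization principle you establish from the rapid decay of $\cpsi_{j,\tau}$. Your verification in (c) that $\mu_1|_{G_j(x)}$ carries no mass on $2T$ (because $2T \cap B(x,\Csep^{-1})^c \subset F(x,aH)$ and $F(x,aH)\cap G_j(x)=\emptyset$ by \eqref{eqn:gjx defn}, while $\mu_1$ lives on $B(x,\Csep^{-1})^c$) is correct and matches the paper's one-line reasoning.

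Part (a), however, is a genuine gap. You identify the problem clearly yourself: none of your elementary attempts work, and you conclude by invoking ``the rapid decay of $\cpsi_{j,\tau}$ away from its box, exactly as in \cite[Lemma~3.4]{guth2020falconer}.'' This is the wrong mechanism and the wrong citation. The rapid spatial decay of $\cpsi_{j,\tau}$ is precisely what gives the localization principle underlying (b)--(d), and it cannot give (a): $M_T\mu_1$ may have $L^1$ norm comparable to $\mu_1(2T)$, which is not small in general, and as you noticed, the pushforward under $d^x$ does not shrink $L^1$ norm. The correct mechanism, which is what \cite[Lemma~3.1]{guth2020falconer} provides (and is the lemma the paper actually cites for (a)), is an oscillatory-integral argument: $M_T\mu_1$ is a wave packet oscillating at frequency $\sim R_j$ along the direction $e$ of $T$, and the Fourier transform in $t$ of $d_*^x(M_T\mu_1)$ is the integral $\int e^{-2\pi i s|x-y|} M_T\mu_1(y)\,dy$, whose phase $R_j\,y\cdot e - s|x-y|$ is stationary only when $(y-x)/|y-x|$ is within angle $o(1)$ of $\pm e$ and $s\approx\pm R_j$. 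If $x\notin 2T$ and $y\in 2T$, this angle is $\gesim R_j^{-1/2+\beta}$, so $|\nabla_y\Phi|\gesim R_j^{1/2+\beta}$ while the amplitude varies on scale $R_j^{-1/2}$; repeated integration by parts then gains $R_j^{-\beta}$ per step, giving $\RapDec(R_j)$. So (a) uses the Fourier support of $M_T\mu_1$ in $\tau$ (the oscillation) and the geometric separation $x\notin 2T$ together; it is not a consequence of spatial decay of $\cpsi_{j,\tau}$ alone, and the proof of Lemma~3.4 does not ``transfer verbatim.''
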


\begin{proof}
    (a) This is Lemma 3.1 of \cite{guth2020falconer} applied to $\mu_1$ and $\mu_1|_{G_j (x)}$; note that $\norm{\mu_1|_{G_j (x)}}_{L^1} \le \norm{\mu_1}_{L^1} \le 1$.

    (b) By assumption, $\mu_1 - \mu_1|_{G_j (x)}$ is supported outside $2T \cap B(x, \Csep^{-1})^c \supset 2T \cap E_1$, so we can apply Lemma 3.2 of \cite{guth2020falconer}.

    (c) By assumption, $\mu_1|_{G_j (x)}$ is supported outside $2T \cap E_1$, so we can apply Lemma 3.2 of \cite{guth2020falconer}.

    (d) This is a direct consequence of Lemma 3.2 of \cite{guth2020falconer}.
\end{proof}

Using Lemma \ref{lem:types of tube}, we are able to compare $\mu_1^x$ with $\mu_{1,g}^x$. We first need some definition. For $x \in E_2$, $j\geq 0$, define $\Bad_j (x)$ to be the union of $2T$, where $T\in\T_j$ is an $R_j^{-1/2+\beta}$-tube such that $2T$ passes through $x$ and either (1) $2T \cap B(x, \Csep^{-1})^c$ is not contained in $G_j (x)$ or any $F(x, aH)$ for $H \in \cH_{\le \max(j, j_*)}(x)$; or (2) $2T \cap B(x, \Csep^{-1})^c \subset G_j (x)$ and $\mu_2 (4T) > R_j^{-\alpha/2+\eps_0}$. By Lemma \ref{lem:acceptable non-acceptable}, (2) is morally the union of the acceptable but not good tubes through $x$, while (1) is the union of the tubes through $x$ that are ``borderline'' between acceptable and non-acceptable.

One may compare the following lemma with Lemma 3.1 of \cite{du2021improved}.

\begin{lemma}\label{lem:compare dstar}
Let $x\in E_2$ and $\text{Bad}_j(x)$ be defined as above, $\forall j\geq 0$. Then,
\begin{equation*}
    \norm{d_*^x (\mu_1^x) - d_*^x (\mu_{1,g}^x)}_{L^1} \lesim \sum_{j \ge 1} R_j^{100\beta d} \mu_1 (\Bad_j (x)) + \RapDec(R_0).
\end{equation*}    
\end{lemma}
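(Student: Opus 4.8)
The plan is to expand both $\mu_1^x$ and $\mu_{1,g}^x$ into their tube pieces using Lemma~\ref{lem:tube_decomposition} and the definition of $\mu_{1,g}^x$, then match the pieces tube-by-tube, showing that all differences come from tubes in $\Bad_j(x)$ plus rapidly decaying errors. First I would write, up to $\RapDec(R_0)$ errors,
\begin{align*}
    \mu_1^x &= M_0(\mu_1|_{G_0(x)}) + \sum_{j\ge 1}\sum_{T\in\T_j} M_T(\mu_1|_{G_j(x)}),\\
    \mu_{1,g}^x &= M_0(\mu_1|_{G_0(x)}) + \sum_{j\ge 1}\sum_{\substack{T\in\T_j\\ T\text{ good}}} M_T\mu_1,
\end{align*}
so that the $M_0$ terms cancel exactly and
\begin{equation*}
    \mu_1^x - \mu_{1,g}^x = \sum_{j\ge 1}\sum_{T\in\T_j}\Big( M_T(\mu_1|_{G_j(x)}) - \one[T\text{ good}]\, M_T\mu_1 \Big) + \RapDec(R_0).
\end{equation*}
Applying $d_*^x$ and the triangle inequality for $L^1$ norms, it suffices to bound $\sum_{j\ge1}\sum_{T\in\T_j}\norm{d_*^x(M_T(\mu_1|_{G_j(x)})) - \one[T\text{ good}]\,d_*^x(M_T\mu_1)}_{L^1}$.

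Next I would classify each tube $T\in\T_j$ by where $2T$ sits relative to $x$, $G_j(x)$, and the sets $F(x,aH)$, and use Lemma~\ref{lem:types of tube} in each case. If $2T$ does not pass through $x$, part~(a) kills both terms. If $2T$ passes through $x$ and $2T\cap B(x,\Csep^{-1})^c\subset F(x,aH)$ for some $H\in\cH_{\le\max(j,j_*)}(x)$, then by Lemma~\ref{lem:acceptable non-acceptable}(b) $T$ is non-acceptable, hence not good, so the good term is absent, and Lemma~\ref{lem:types of tube}(c) gives $\norm{M_T(\mu_1|_{G_j(x)})}_{L^1}\le\RapDec(R_j)$ — so after $d_*^x$ this piece is $\RapDec(R_j)$. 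If $2T$ passes through $x$ and $2T\cap B(x,\Csep^{-1})^c\subset G_j(x)$, then by Lemma~\ref{lem:acceptable non-acceptable}(a) $T$ is acceptable; now split on whether $\mu_2(4T)\le R_j^{-\alpha/2+\eps_0}$: if yes, $T$ is good and Lemma~\ref{lem:types of tube}(b) gives $\norm{M_T\mu_1 - M_T(\mu_1|_{G_j(x)})}_{L^1}\le\RapDec(R_j)$; if no, then $T$ is not good, the good term is absent, and $T$ contributes via case~(2) of $\Bad_j(x)$, with $\norm{d_*^x(M_T(\mu_1|_{G_j(x)}))}_{L^1}\lesim\norm{M_T(\mu_1|_{G_j(x)})}_{L^1}\lesim\mu_1(2T)+\RapDec(R_j)$ by Lemma~\ref{lem:types of tube}(d) (using that $d_*^x$ does not increase the $L^1$ norm). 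The only remaining tubes are the ``borderline'' ones where $2T$ passes through $x$ but $2T\cap B(x,\Csep^{-1})^c$ is contained in neither $G_j(x)$ nor any $F(x,aH)$; these are exactly case~(1) of $\Bad_j(x)$, and for them we again use Lemma~\ref{lem:types of tube}(d) to get a contribution $\lesim\mu_1(2T)+\RapDec(R_j)$, since $\one[T\text{ good}]M_T\mu_1$ is either zero or controlled the same way.

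Summing over all tubes, the $\RapDec(R_j)$ errors contribute $\sum_{j\ge1}|\T_j|\RapDec(R_j)=\RapDec(R_0)$ since $|\T_j|$ is polynomial in $R_j$, and the genuinely bad tubes contribute $\sum_{j\ge1}\sum_{T\in\Bad_j(x)}(\mu_1(2T)+\RapDec(R_j))$. Here I would use bounded overlap of the tubes in $\T_j$ — each point lies in $O(1)$ tubes $2T$ and the tubes covering $\Bad_j(x)$ have $\lesim R_j^{100\beta d}$ bounded overlap after accounting for the $2T$-versus-$T$ thickening and the number of direction sectors $\tau$ touched — to conclude $\sum_{T\in\Bad_j(x)}\mu_1(2T)\lesim R_j^{100\beta d}\mu_1(\Bad_j(x))$. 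This gives exactly the claimed bound $\sum_{j\ge1}R_j^{100\beta d}\mu_1(\Bad_j(x))+\RapDec(R_0)$. The main obstacle is the bookkeeping in this last overlap estimate: one must be careful that a single geometric tube is covered by tubes from several frequency blocks $\tau$ and that passing from $T$ to $2T$ (and the $4T$ appearing in the definition of good) only inflates overlap by a controlled $R_j^{O(\beta d)}$ factor; everything else is a routine case analysis driven by Lemmas~\ref{lem:acceptable non-acceptable} and~\ref{lem:types of tube}.
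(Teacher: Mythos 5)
Your proof is correct and follows essentially the same route as the paper: expand both measures via Lemma \ref{lem:tube_decomposition}, cancel the $M_0$ terms, classify tubes by the conditions in Lemmas \ref{lem:acceptable non-acceptable} and \ref{lem:types of tube}, and bound the bad-tube contribution by the overlap count $\lesim R_j^{100\beta d}$ for tubes through both $x$ and a fixed $y\in E_1$ at distance $\gtrsim 1$. The only place your write-up is looser than the paper's is the final overlap estimate, where ``each point lies in $O(1)$ tubes $2T$'' is not literally true across all direction sectors; the precise statement you need (and which the paper records) is that for $y\in E_1$ with $|x-y|\gtrsim 1$, at most $\lesim R_j^{100\beta d}$ tubes $T\in\T_j$ have $2T\ni x,y$, which directly yields $\sum_{T\in\Tjbad}\mu_1(2T)\lesim R_j^{100\beta d}\mu_1(\Bad_j(x))$.
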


\begin{proof}
We apply Lemma \ref{lem:tube_decomposition} and the definition of $\mu_{1,g}^x$. Note that the $M_0 (\mu_1|_{G_0(x)})$ terms cancel in the resulting expression for $\mu_1^x - \mu_{1,g}^x$ (this is the reason why $\mu_{1,g}^x$ needs to depend on $x$). Let $g(T) = 1$ if $T$ is good and $0$ otherwise. Thus, it suffices to prove for each $j \ge 1$,
\begin{align}
    &\norm{\sum_{T \in \T_j} [d^x_* (M_T (\mu_1|_{G_j (x)})) - d^x_* (M_T \mu_1) g(T)]}_{L^1} \notag \\
    \lesim & R_j^{100\beta d} \mu_1 (\Bad_j (x)) + \RapDec(R_j). \label{eqn:compare dstar}
\end{align}
Let $\Tjbad$ be the set of tubes $T \in \T_j$ such that $2T$ passes through $x$ and either condition (1) or (2) in the definition of $\text{Bad}_j(x)$ holds. We claim that if $T \notin \Tjbad$, then
\begin{equation}\label{eqn:bound1}
    \norm{d^x_* (M_T (\mu_1|_{G_j (x)})) - d^x_* (M_T \mu_1) g(T)}_{L^1} \le \RapDec(R_j),
\end{equation}
while if $T \in \Tjbad$, then
\begin{equation}\label{eqn:bound2}
\norm{d^x_* (M_T (\mu_1|_{G_j (x)})) - d^x_* (M_T \mu_1) g(T)}_{L^1} \lesssim \mu_1 (2T) + \RapDec(R_j).
\end{equation}
We show how the claim implies \eqref{eqn:compare dstar}. For any $y \in E_1$, $d(x, y) \gesim 1$ and so there are $\lesim R_j^{100\beta d}$ many $R_j^{-1/2+\beta}$-tubes in $\T_j$ passing through both $x$ and $y$. Thus,
\begin{equation}\label{eqn:bound3}
\sum_{T \in \Tjbad} \mu_1 (2T) \lesim R_j^{100\beta d} \mu_1 (\Bad_j (x)).
\end{equation}
Combining \eqref{eqn:bound1}, \eqref{eqn:bound2}, \eqref{eqn:bound3} proves \eqref{eqn:compare dstar}.

Now we prove the claim. Suppose $T \notin \Tjbad$. By working through the definition of $\Tjbad$, we have three possibilities of $T$: either
\begin{enumerate}[(i)]
    \item $x \notin 2T$;
    \item $x \in 2T$, $2T \cap B(x, \Csep^{-1})^c \subset G_j (x)$ and $\mu_2 (4T) \le R_j^{-\alpha/2+\eps_0}$. Then $T$ is acceptable by Lemma \ref{lem:acceptable non-acceptable}(a), so it is good since $\mu_2 (4T) \le R_j^{-\alpha/2+\eps_0}$.
    \item $x \in 2T$, $2T \cap B(x, \Csep^{-1})^c \subset F(x, aH)$ for some $H \in \cH_{\le \max(j, j_*)}(x)$. Then $T$ is non-acceptable by Lemma \ref{lem:acceptable non-acceptable}(b).
\end{enumerate}

In case (i), we get \eqref{eqn:bound1} by Lemma \ref{lem:types of tube}(a), regardless of $g(T) = 0$ or $1$. In case (ii), we use Lemma \ref{lem:types of tube}(b) and $g(T) = 1$, and in case (iii), we use Lemma \ref{lem:types of tube}(c) and $g(T) = 0$. Thus, if $T \notin \Tjbad$, then \eqref{eqn:bound1} holds.

Now suppose $T \in \Tjbad$. Then we get \eqref{eqn:bound2} by applying Lemma \ref{lem:types of tube}(d), regardless of whether $g(T) = 0$ or $1$. This proves the claim.
\end{proof}

The crucial estimate about $\Bad_j (x)$ is the following lemma, which will be proved in Section \ref{sec:bad measure}.

\begin{lemma}\label{lem:bad_estimate}
    For every $\eps_0 > 0$, there exist $\eta_0 (\eps_0), \kappa_0 (\eps_0) > 0$ such that for any $\eta \in (0, \eta_0], \kappa \in (0, \kappa_0]$ and sufficiently small $\beta$ depending on $\eps_0, \eta, \kappa$, the following holds. In the construction of $\mu_{1,g}^x$ in Section \ref{sec: good}, we can choose some $a \in [99\Csep, 100\Csep]$ such that for any $j\geq 1$, if we define $$\Bad_j := \{ (x, y) \in E_2\times E_1 : y \in \Bad_j (x)\}\,,$$ then $\mu_2 \times \mu_1 (\Bad_j) \lesssim R_j^{-200\beta d}.
    $
\end{lemma}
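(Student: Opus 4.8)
The plan is to split $\Bad_j$ according to the two alternatives in the definition of $\Bad_j(x)$ and treat them separately. Write $\Bad_j(x) = \Bad_j^1(x)\cup\Bad_j^2(x)$, where $\Bad_j^1(x)$ is the union of the $2T$ with $x\in 2T$ for which $2T\cap B(x,\Csep^{-1})^c$ is neither contained in $G_j(x)$ nor in any single $F(x,aH)$ (the ``borderline'' tubes of alternative (1)), and $\Bad_j^2(x)$ is the union of those with $2T\cap B(x,\Csep^{-1})^c\subset G_j(x)$ but $\mu_2(4T)>R_j^{-\alpha/2+\eps_0}$ (acceptable but not good, alternative (2)). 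With $\Bad_j^i=\{(x,y)\in E_2\times E_1:y\in\Bad_j^i(x)\}$ it suffices to show $\mu_2\times\mu_1(\Bad_j^i)\lesssim R_j^{-200\beta d}$ for $i=1,2$. The estimate for $\Bad_j^2$ will hold for every admissible $a$, so the probabilistic choice of $a$ is needed only to control $\Bad_j^1$, and the two demands will be compatible.

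For $\Bad_j^2$ I would Fubini the other way, writing $\mu_2\times\mu_1(\Bad_j^2)=\int_{E_1}\mu_2(\{x:y\in\Bad_j^2(x)\})\,d\mu_1(y)$, and fix $y\in E_1$. If $y\in\Bad_j^2(x)$ through a tube $T$, then $T$ passes through both $y$ and $x$, and after enlarging $4T$ to an honest tube through $y$ of comparable radius I apply Theorem \ref{conj:threshold} with base point $y\in E_1$, scales $\delta\sim R_j^{-1/2+\beta}$, $r\sim R_j^{-\kappa}$, concentration exponent matched to $R_j^{-\eta}$, and the theorem's $\eps$ taken $\ll\eps_0$ (this is where $\kappa\le\kappa_0(\eps_0)$ enters, so that the bound $\frac{\delta^\alpha}{r^{\alpha-(k-1)}}\delta^{-\eps}$ is $\ll\mu_2(4T)$). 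This yields a ``concentrated pair'' set $A$, a set $B\subset E_1\times E_2$ with $\mu_1\times\mu_2(B)\le\delta^\gamma$, and $\mu_2(4T\setminus(A|_y\cup B|_y))\ll R_j^{-\alpha/2+\eps_0}$. The geometric heart: if $z\in 4T\cap A|_y$ then $y$ and $z$ lie in a common $R_j^{-\eta}$-concentrated $(R_j^{-\kappa},k)$-plate, and since $|y-z|\ge\Csep^{-1}\gtrsim 1$ the chord through $y,z$ is long, so all of $4T$ lies within $O(R_j^{-\kappa})$ of that plate's central plane, hence inside $aH$ for the corresponding $H\in\cH_j\subset\cH_{\le\max(j,j_*)}$ (here $a\ge 99\Csep$ beats the absolute geometric constant) --- contradicting that $T$ is acceptable. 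So $\mu_2(4T\cap A|_y)=0$ and hence $\mu_2(2T)\le\mu_2(4T)\le 2\mu_2(4T\cap B|_y)$ once $R_0$ is large. Summing over the relevant tubes through $y$, and using that for each $z\in E_2$ there are $\lesssim R_j^{100\beta d}$ tubes through both $y$ and $z$ (here $\dist(E_1,E_2)\gtrsim 1$ forces $|y-z|\gtrsim 1$), gives $\mu_2(\{x:y\in\Bad_j^2(x)\})\lesssim R_j^{100\beta d}\mu_2(B|_y)$; integrating in $y$, $\mu_2\times\mu_1(\Bad_j^2)\lesssim R_j^{100\beta d}\mu_1\times\mu_2(B)\le R_j^{100\beta d}\delta^\gamma$, which is $\le R_j^{-200\beta d}$ once $\beta$ is small in terms of $\gamma=\gamma(\eps_0,\eta,\alpha,k)$.

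For $\Bad_j^1$ I would use the probabilistic wiggle in $a$. Fix $x\in E_2$, a plate $H\in\cH_{\le\max(j,j_*)}$ of radius $R_\ell^{-\kappa}$, and a tube $T\in\T_j$ through $x$; then $2T\cap B(x,\Csep^{-1})^c$ is ``borderline'' for $F(x,aH)$ exactly when $aR_\ell^{-\kappa}$ lies in the range of $y'\mapsto \sup\{\dist(z,P_H):z\in l(x,y')\cap B(0,10)\}$ as $y'$ ranges over $2T\cap B(x,\Csep^{-1})^c$. That supremum is a Lipschitz function of the direction of $l(x,y')$ with an absolute Lipschitz constant, and those directions fill an $O(\delta)$-ball, so its range is an interval of length $\lesssim\delta=R_j^{-1/2+\beta}$; hence the set of bad $a$ has measure $\lesssim R_\ell^{\kappa}\delta\lesssim R_j^{-1/2+\beta+2\kappa}$ (the extra $\kappa$ absorbs the $j<j_*$ regime, where $R_\ell$ can be as small as $R_0^{-2\kappa}$). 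Union-bounding over the $\lesssim R_j^{100\beta d}$ tubes through any fixed pair $(x,y)\in E_2\times E_1$ and the $\lesssim R_j^{2N\eta}$ plates in $\cH_{\le\max(j,j_*)}$ (Lemma \ref{lem:few_large_plates}, $N=N(\alpha,k,d)$), I obtain $\mathbb{E}_a[\mu_2\times\mu_1(\Bad_j^1)]\lesssim R_j^{-1/2+100\beta d+\beta+2\kappa+2N\eta}$. Choosing $\kappa\le\kappa_0$, $\eta\le\eta_0$ small and then $\beta$ small makes $\sum_{j\ge 1}R_j^{300\beta d}\,\mathbb{E}_a[\mu_2\times\mu_1(\Bad_j^1)]\lesssim R_0^{-c}$ for some $c>0$, so by Markov's inequality some $a\in[99\Csep,100\Csep]$ satisfies $\mu_2\times\mu_1(\Bad_j^1)\lesssim R_j^{-300\beta d}R_0^{-c}\le R_j^{-200\beta d}$ for every $j\ge 1$ simultaneously (for $R_0$ large). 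This $a$ also works for $\Bad_j^2$, which completes the proof.

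The main obstacle, and the genuinely new point, is arranging the probabilistic wiggle so that one $a$ controls the borderline error at every scale $R_j$ at once: this forces the sharp geometric estimate that a borderline tube straddling $\surf(aH)$ occupies only a $\sim R_j^{-1/2+o(1)}$ fraction of $a$-values --- comfortably beating the $\sim R_j^{o(1)}$ many tubes and plates at each scale and leaving room to sum over $j$ --- and it is what ultimately pins down the hierarchy $\beta\ll\eta\ll\kappa\ll\eps_0$. A secondary technical nuisance is the bookkeeping in invoking Theorem \ref{conj:threshold}: one must base the tube at a point of $E_1$, enlarge it to an honest tube, align the theorem's scale-$r$ concentrated plates and threshold $\delta^{\eta}$ with the net $\cE_{R_j^{-\kappa},k}$ and threshold $R_j^{-\eta}$ defining $\cH_j$, and treat the $j<j_*$ regime where ``acceptable'' is measured against the fixed family $\cH_{\le j_*}$.
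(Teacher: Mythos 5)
Your proof is correct and follows essentially the same route as the paper: split $\Bad_j$ into the acceptable-but-not-good tubes (controlled via Theorem \ref{conj:threshold} plus the geometric fact that a tube joining two $\gtrsim 1$-separated points of a heavy plate must be non-acceptable, matching the paper's Claim 1) and the borderline tubes (controlled by the probabilistic wiggle in $a$ and Markov's inequality, matching the paper's Claim 2). The small differences --- pushing the first estimate entirely onto the exceptional set $B$ via $\mu_2(4T)\lesssim\mu_2(4T\cap B|_y)$ rather than separating $B$ off at the end, union-bounding over the $\lesssim R_j^{100\beta d}$ tubes through a fixed pair rather than the paper's per-pair endpoint formulation, and running Markov once on a $j$-weighted sum rather than once per $(j,\ell)$ --- are cosmetic reorganizations of the same argument.
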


Using this, we complete the proof of Proposition \ref{mainest1}.

\begin{proof}[Proof of Proposition \ref{mainest1}]
Our goal is to find a large subset $E_2' \subset E_2$ with $\mu_2(E_2') \ge 1 - R_0^{-\beta}$ such that for each $x \in E_2'$, we have that $B^d(0,10) \setminus G(x)$ is contained within some $(R_0^{-\beta}, k)$-plate and
\begin{equation} \label{eqn:mu1good1'}
        \norm{d_*^x (\mu_1|_{G(x)}) - d_*^x (\mu_{1,g}^x)}_{L^1} \le R_0^{-\beta}.
\end{equation}

First, let us determine the auxiliary parameters $\eps_0, \kappa, \eta, \beta$. (Recall that $\eps, \alpha, k, d$ are fixed constants.) We defer the choice of $\eps_0 (\eps)$ to the next section, see Lemma \ref{lem:big_r}. Then, choose $\kappa = \kappa_0 (\eps_0)$ in Lemma \ref{lem:bad_estimate}. Next, choose $\eta = \eta(\kappa, \eps_0)$ to be the smaller of the two $\eta_0$'s in Lemma \ref{lem:refine E_2} and Lemma \ref{lem:bad_estimate}. Finally, choose $\beta$ to the smaller of the $\beta(\eps_0, \eta, \kappa)$ in Lemma \ref{lem:bad_estimate} and $\min(\frac{\eta}{3}, \frac{\kappa}{2})$.

Now, we shall construct $E_2'$ by taking the set $E_2''$ from Lemma \ref{lem:refine E_2} and removing some ``bad parts'' given by Lemma \ref{lem:bad_estimate}. Fix a choice of $a$ in the construction of $\mu_{1,g}^x$ in Section \ref{sec: good} such that the conclusion in Lemma \ref{lem:bad_estimate} holds for any $j\geq 1$. By Lemma \ref{lem:bad_estimate}, for each $j\geq 1$, we can find a set $F_j \subset E_2$ with $\mu_2 (F_j) \le R_j^{-50 \beta d}$ such that $\mu_1 (\Bad_j (x)) \lesim R_j^{-150 \beta d}$ for $x\in E_2\setminus F_j$. Finally, define $E_2' := E_2'' \setminus \bigcup_{j \ge 1} F_j$. We now verify that $E_2'$ satisfies the desired conditions.

First, observe that $\mu_2 (E_2') \ge \mu(E_2) - \mu(E_2\setminus E_2'') - \sum_{j \ge 1} R_j^{-50\beta d} > 1-R_0^{-\beta}$ if $R_0$ is sufficiently large.

Next, fix $x \in E_2'$. Since $x \in E_2''$ and $\beta < \frac{\kappa}{2}$, we get from the first part of Lemma \ref{lem:Ginfty} that $B(0, 10) \setminus G(x)$ is contained in some $(R_0^{-\beta}, k)$-plate. Now by the second part of Lemma \ref{lem:Ginfty}, since $\beta \le \frac{\eta}{3}$, we have that
    \begin{equation} \label{eqn: compare to mu1x}
        \norm{d_*^x (\mu_1|_{G(x)}) - d^x_* (\mu_1^x)}_{L^1} < \frac 12 R_0^{-\beta}.
    \end{equation}

    
    
    For each $x \in E_2'$, Lemma \ref{lem:compare dstar} tells us (for some constant $C$ depending only on our parameters):
    \begin{equation}\label{eqn: compare to mu1good}
        \norm{d_*^x (\mu_1^x) - d_*^x (\mu_{1,g}^x)}_{L^1} \le C \cdot \sum_{j \ge 1} R_j^{-50 \beta d} + \RapDec(R_0) < \frac 12 R_0^{-\beta}
    \end{equation}
    if $R_0$ is sufficiently large. Combining \eqref{eqn: compare to mu1x} and \eqref{eqn: compare to mu1good} via triangle inequality proves the desired equation \eqref{eqn:mu1good1'}.
\end{proof}

\subsection{Control of bad part}\label{sec:bad measure}
The goal of this subsection is to prove Lemma \ref{lem:bad_estimate}. To do so, we will use the new radial projection estimate, Theorem \ref{conj:threshold}.

\begin{proof}[Proof of Lemma \ref{lem:bad_estimate}] Let $\Bad_j = \{ (x, y) \in E_2\times E_1 : y \in \Bad_j (x) \}$. 
By definition of $\Bad_j (x)$ above Lemma \ref{lem:compare dstar}, we have $\Bad_j \subset \Bad_j^1 \cup \Bad_j^2$. Here $\Bad_j^1$ is the set of pairs $(x, y)\in E_2\times E_1$ such that $x, y$ lie in $2T$ for some $R_j^{-1/2+\beta}$-tube $T \in \T_j$ with $2T \cap B(x, \Csep^{-1})^c \subset G_j (x)$ and $\mu_2 (4T) \ge R_j^{-\alpha/2+\eps_0}$. And $\Bad_j^2$ is the set of pairs $(x, y)\in E_2\times E_1$ satisfying that $x, y$ lie in $2T$ for some $R_j^{-1/2+\beta}$-tube $T \in \T_j$ such that $2T \cap B(x, \Csep^{-1})^c$ is not contained in $G_j (x)$ or any $F(x, aH)$ for $H \in \cH_{\le \max(j, j_*)}(x)$.

Our bound of $\mu_2 \times \mu_1 (\Bad^1_j)$ will not depend on the choice of $a$ in the construction of $\mu_{1,g}^x$ in Section \ref{sec: good}, while $\mu_2 \times \mu_1 (\Bad^2_j)$ will. For a given $H \in \cH_\ell$ with $\ell \le \max(j, j_*)$, let $\Bad^2_j (H)$ be the set of pairs $(x, y) \in E_2 \times E_1$ satisfying that $H \in \cH_{\le \max(j, j_*)} (x)$ and $x, y$ lie in $2T$ for some $R_j^{-1/2+\beta}$-tube $T \in \T_j$ such that $2T \cap B(x, \Csep^{-1})^c$ is not contained in $\R^d \setminus F(x, aH)$ or $F(x, aH)$. We'll prove
$$
\mu_2 \times \mu_1 (\Bad^1_j) \lesssim R_j^{-200\beta d} 
$$
and that there exists $a$ such that
\begin{equation}\label{eqn:bad2 estimate}
    \sum_{H \in \cH_{\le \max(j, j_*)}} \mu_2 \times \mu_1 (\Bad^2_j(H)) \lesssim R_j^{-1/8}\,,
\end{equation}
for any $j\geq 0$. Since $\Bad_j^2 \subset \bigcup_{H \in \cH_{\le \max(j, j_*)}} \Bad_j^2 (H)$, these two bounds will prove Lemma \ref{lem:bad_estimate}.

\emph{\textbf{Upper bound of $\mu_2 \times \mu_1 (\Bad^1_j)$.}}  This is a consequence of Theorem \ref{conj:threshold}. By applying Theorem \ref{conj:threshold} with parameters $(\delta, r, \delta^\eta, \eps) = (R_j^{-1/2+\beta}, R_j^{-\kappa}/(200\Csep), R_j^{-\eta}, \eps_0/4)$, we can find $\gamma > 0$ such that the following is true. There exists a set $B\subset E_2\times E_1$ with $\mu_2 \times \mu_1 (B) \le R_j^{-\gamma}$ such that for each $y \in E_1$ and $R_j^{-1/2+\beta}$-tube $T$ with $2T$ containing $y$, we have (assuming $\kappa$ and $\beta$ are chosen small enough in terms of $\eps_0$):
    \begin{equation}\label{eq-goodthred}
        \mu_2 (2T \setminus (A|_y \cup B|_y)) \lesim_{\Csep} \frac{R_j^{(-1/2+\beta)\alpha}}{R_j^{-\kappa (\alpha-k+1)}} \cdot R_j^{(-1/2+\beta)(-\eps_0/4)} \le R_j^{-\alpha/2+\eps_0/2}\,,
    \end{equation}
where $A$ is the set of pairs $(x, y) \in E_2 \times E_1$ satisfying that $x$ and $y$ lie in some $R_j^{-\eta}$-concentrated $(R_j^{-\kappa}/(200\Csep),k)$-plate on $\mu_1 + \mu_2$. Since decreasing the values of $\beta, \gamma$ makes the previous statement weaker, we may assume $200 \beta d = \gamma$. 

Now, observe that since $d(y, E_2) \ge \Csep^{-1} \gesim 1$ for $y\in E_1$ and $\mu_2$ is a probability measure, there are at most $\lesim R_j^{\alpha/2-\eps_0+O(\beta)}$ many tubes $T \in \T_j$ with $2T$ containing $y$ satisfying $\mu_2 (4T) \ge R_j^{-\alpha/2+\eps_0}$. Moreover, we claim the following.

\textbf{Claim 1.}. Let $y\in E_1$. Suppose there exist $x\in E_2$ and $T\in\T_j$ such that $x, y$ lie in $2T$ with $2T \cap B(x, \Csep^{-1})^c \subset G_j (x)$. Then $2T\cap A|_y =\emptyset$. 

 Assuming Claim 1, by \eqref{eq-goodthred} we get
$$
\mu_2 (\Bad^1_j|_y \setminus B|_y) \lesssim R_j^{\alpha/2-\eps_0+O(\beta)} \cdot R_j^{-\alpha/2+\eps_0/2} \leq R_j^{-200\beta d}\,.
$$
Thus, $\mu_2 \times \mu_1 (\Bad^1_j \setminus B) \lesim R_j^{-200\beta d}$, and so $\mu_2 \times \mu_1 (\Bad^1_j) \lesim R_j^{-200\beta d}$.

It remains to prove Claim 1. Let $y\in E_1$, $x\in E_2$, and $T\in\T_j$ be such that $x, y$ lie in $2T$ with $2T \cap B(x, \Csep^{-1})^c \subset G_j (x)$. Suppose $2T\cap A|_y \neq \emptyset$. Pick a point $x' \in 2T \cap A|_y$; by definition, we have $x' \in E_2\cap 2T$ and there exists a $R_j^{-\eta}$-concentrated $(R_j^{-\kappa}/(200\Csep),k)$-plate $H'$ such that $x'$ and $y$ lie in $H'$. We also know $x', y$ both belong to $2T$. Since $d(x', y) \ge \Csep^{-1}$, we have that $2T$ is contained in $100\Csep H'$. This in turn is a $R_j^{-\eta}$-concentrated $(R_j^{-\kappa}/2, k)$-plate, so it must be contained in some $H \in \cH_j$. Hence, $2T \subset H$.

By assumption, we know
$2T \cap B(x, \Csep^{-1})^c \not\subset F(x, aH)$ (where $a = 99\Csep$), so there exists $z \in 2T \cap B(x, \Csep^{-1})^c$ such that $\ell(x, z)$ intersects $\surf(aH)$ at some point $w$. Let $P$ be the $k$-plane through $x$ parallel to the central plane $P_H$ of $H$. Since $x, w, z$ are collinear, we have
\begin{equation*}
    \frac{d(w, P)}{d(z, P)} = \frac{d(w, x)}{d(z, x)}.
\end{equation*}
However, we know that $w \in \surf(aH)$, so $d(w, P_H) = aR_j^{-\kappa}$. Also $d(P, P_H) \le R_j^{-\kappa}$ since $x \in 2T \subset H$. Thus by triangle inequality, we get $d(w, P) \ge (a-1) R_j^{-\kappa}$. While $d(z, P) \le 2 R_j^{-\kappa}$ (since $z \in 2T \subset H$), so $\frac{d(w, P)}{d(z, P)} \ge \frac{a-1}{2}$. On the other hand, $d(w, x) \le 20$ and $d(z, x) \ge \Csep^{-1}$, so $\frac{d(w, x)}{d(z, x)} \le 20\Csep$. Hence, we get $\frac{a-1}{2} \le 20\Csep$, contradiction to $a = 99\Csep$ and $\Csep \ge 1$.


\emph{\textbf{Upper bound of $\mu_2 \times \mu_1 (\Bad^2_j)$.}} We will prove there exists a choice for $a$ such that for all $j, \ell$ satisfying $\ell \le \max(j, j_*)$, we have
\begin{equation}\label{eqn:sum for fixed ell}
    F_{j,\ell} (a) = \sum_{H \in \cH_\ell} \mu_2 \times \mu_1 (\Bad_j^2 (H)) \le R_j^{-1/4}.
\end{equation}
Then given $j$, summing \eqref{eqn:sum for fixed ell} over $\ell \le \max(j, j_*)$ gives \eqref{eqn:bad2 estimate} (use $\max(j, j_*) \lesim \log R_j$). To prove \eqref{eqn:sum for fixed ell}, we fix $j, \ell$ and upper bound the measure of the set of $a$'s for which \eqref{eqn:sum for fixed ell} fails. We would like to apply Markov's inequality, so we compute the expectation of $F_{j,\ell} (a)$ over $a$. Let $P(a) = \frac{1}{\Csep} \one_{[99\Csep, 100\Csep]} \, da$ be a probability measure of $a \in [99\Csep, 100\Csep ]$. Then we have
\begin{align*}
    \int_I F_{j,\ell} (a) \,dP(a)
    = &\int_{E_1}\int_{E_2} \int_I \sum_{H \in \cH_\ell} 1_{\Bad^2_j(H)} (x, y) \,dP(a) d\mu_2(x) d\mu_1(y)\, \\
    \le & \sup_{x \in E_2, y \in E_1} \int_{I} \sum_{H \in \cH_\ell} 1_{\Bad^2_j(H)} (x, y) \,dP(a).
\end{align*}
The following claim shows that it is unlikely that $(x,y)\in \Bad_j^2 (H)$ for a given $H$.

\textbf{Claim 2.} Suppose $(x,y)\in \Bad_j^2 (H)$, where $H \in \cH_{\ell}(x)$ with $\ell\leq \max(j, j_*)$. Let $z_1, z_2$ be the intersections of the line through $x, y$ with $B(0, 10)$. Then for one of $i = 1, 2$, we have $|d(z_i, P_H) - a R_\ell^{-\kappa}| \lesim R_j^{-1/2+\beta}$, where $P_H$ is the central plane of $H$.

\textit{Proof.} By definition of $\Bad_j^2 (H)$, there exists $T\in\T_j$ with $2T$ containing both $x, y$ such that $2T \cap B(x, \Csep^{-1})^c$ is not contained in $\R^d \setminus F(x, aH)$ or $F(x, aH)$.

Fix a large constant $C > 0$. If $d(z_i, P_H) - a R_\ell^{-\kappa} > CR_j^{-1/2+\beta}$ for $i = 1$ or $2$, then we claim $2T \cap B(x, \Csep^{-1})^c \subset \R^d \setminus F(x, aH)$. Note that for any $y' \in 2T \cap B(x, \Csep^{-1})^c$, one of the intersections $z'$ of the line through $x, y'$ with $B(0, 10)$ is contained in $B(z_i, CR_j^{-1/2+\beta})$, and so by triangle inequality, we get $d(z', P_H) > a R_\ell^{-\kappa}$. (This is why the $B(x, \Csep^{-1})^c$ is important: it is not true that for all $y' \in 2T$, one of the intersections $z'$ of the line through $x, y'$ with $B(0, 10)$ is contained in $B(z_i, CR_j^{-1/2+\beta})$. Take $y'$ such that $(y-x) \perp (y'-x)$, for instance.) This shows that $2T \cap B(x, \Csep^{-1})^c \subset \R^d \setminus F(x, aH)$.

If on the other hand $d(z_i, P_H) < a R_\ell^{-\kappa} - CR_j^{-1/2+\beta}$ for both $i = 1, 2$, then a similar argument shows that $2T \cap B(x, \Csep^{-1})^c \subset F(x, aH)$. Thus, we have established the contrapositive of the claim. \qed

Using Claim 2, observe that, for a fixed pair $(x,y)\in E_2\times E_1$ and a fixed $H \in \cH_{\ell}(x)$ with $\ell\leq \max(j, j_*)$ (so $R_\ell \le R_j^2$), we have $(x, y) \in \Bad_j^2 (H)$ for $a$ lying in two intervals each of length $\lesssim R_j^{-1/2+\beta} R_\ell^\kappa \lesssim R_j^{-1/2+\beta+2\kappa}$. Recall that by Lemma \ref{lem:few_large_plates} and $R_\ell \le R_j^2$ we have $|\cH_\ell| \lesssim R_j^{2N\eta}$. Therefore,
\begin{align*}
    \sum_{H \in \cH_\ell} \int_{I} 1_{\Bad^2_j(H)} (x, y) \,dP(a)\lesssim  &\frac{R_j^{2N\eta} R_j^{-1/2+\beta+2\kappa}}{|I|} \\
\sim &R_j^{-1/2+\beta+2N\eta+2\kappa},
\end{align*}
Thus, assuming $\beta, \eta, \kappa$ are small enough and $R_0$ is large enough, by Markov's inequality we have $|\{ a : F_{j,\ell} (a) > R_j^{-1/4} \}| \le R_j^{-1/8}$. By the union bound, \eqref{eqn:sum for fixed ell} fails for some $j, \ell$ satisfying $\ell \le \max(j, j_*)$ only in a set of measure
\begin{align*}
    &\sum_{j=0}^\infty \sum_{\ell \le \max(j, j_*)} R_j^{-1/8} \le \sum_{j=0}^{j_*} \sum_{\ell=0}^{j_*} R_0^{-1/8} + \sum_{\ell=0}^\infty \sum_{j = \ell}^\infty R_j^{-1/8} \\
    \lesim & R_0^{-1/8} \cdot (\log_2 R_0)^2 + \sum_{\ell \ge 0} R_\ell^{-1/8} \lesim R_0^{-1/8} \cdot \left[ (\log_2 R_0)^2 + 1 \right] < 1,
\end{align*}
if $R_0$ is chosen sufficiently large. 
\end{proof}

\section{Refined decoupling and Proposition \ref{mainest2}}\label{sec:prop2.2}
\setcounter{equation}0

In this section, we prove Proposition \ref{mainest2}, which will complete the proof of Theorem \ref{thm: pinned}. This part of the argument proceeds very similarly as \cite[Section 4]{du2021improved} and \cite[Section 5]{guth2020falconer}.

Let $\sigma_r$ be the normalized surface measure on the sphere of radius $r$. The main estimates in the proof of Proposition \ref{mainest2} are the following.

\begin{lemma}\label{lem:big_r}
For any $\alpha > 0$, $r>10R_0$, and $\eps_0$ sufficiently small depending on $\alpha, \eps$:
	$$
	\int_{E_2} | \mu_{1,g}^x * \hsigma_r(x)|^2 d \mu_2(x) \lesssim_{\varepsilon} r^{-\frac{\alpha d}{d+1}+\eps} r^{-(d-1)} \int  | \hmu_1 |^2 \psi_r d \xi+{\rm RapDec}(r),
	$$
where $\psi_r$ is a weight function which is $\sim 1$ on the annulus $r-1 \le |\xi| \le r+1$ and decays off of it.  To be precise, we could take
$$
\psi_r(\xi) = \left( 1 + | r- |\xi|| \right)^{-100}.
$$
\end{lemma}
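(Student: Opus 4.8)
The plan is to follow \cite[Section 5]{guth2020falconer} and \cite[Section 4]{du2021improved} essentially verbatim; the heavy-plate/radial-projection construction of Section \ref{sec:prop2.1} plays no role here, and the only property of $\mu_{1,g}^x$ we use is that every good $R_j^{-1/2+\beta}$-tube $T$ satisfies $\mu_2(4T)\le R_j^{-\alpha/2+\eps_0}$. \emph{Single-scale reduction.} Since $\widehat{\hsigma_r}$ is, up to a constant and a reflection, the surface measure on $\{|\xi|=r\}$, and $r>10R_0$, the piece $M_0(\mu_1|_{G_0(x)})$ — which has Fourier support in $\{|\xi|\le 2R_0\}$ and is the \emph{only} part of $\mu_{1,g}^x$ depending on $x$ — satisfies $M_0(\mu_1|_{G_0(x)})*\hsigma_r\equiv 0$. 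Likewise $M_T\mu_1*\hsigma_r\equiv 0$ unless the frequency slab of $T$ meets $\{|\xi|=r\}$, which forces $T\in\mathbb{T}_{j-1}\cup\mathbb{T}_j\cup\mathbb{T}_{j+1}$ with $R_j$ the dyadic scale nearest $r$. Hence $\mu_{1,g}^x*\hsigma_r=\sum_{T\text{ good},\,R_j\sim r}M_T\mu_1*\hsigma_r=:F$, a function \emph{independent of $x$}, and it remains to bound $\int_{E_2}|F(x)|^2\,d\mu_2(x)$; for bookkeeping one works at the single scale $\mathbb{T}_j$ with $R_j\sim r$.

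\emph{Wave-packet structure.} Decompose $\hsigma_r=\sum_\nu\hsigma_r^\nu+\RapDec(r)$ over caps $\nu$ of $\{|\xi|=r\}$ of angular width $r^{-1/2}$; stationary phase shows each $\hsigma_r^\nu$ is, modulo rapidly decaying tails, supported on a tube of radius $\sim r^{-1/2}$ in direction $\omega_\nu$ with $\|\hsigma_r^\nu\|_\infty\lesssim r^{(d-1)/2}$. Convolving the tube-supported $M_T\mu_1$ with such a radial kernel keeps the result inside a bounded dilate of $T$; thus on $B(0,1)$ each $g_T:=M_T\mu_1*\hsigma_r$ is (up to $\RapDec(r)$) supported on $4T$, has Fourier support in the cap $\theta(T)$ of $\{|\xi|=r\}$ dual to the direction of $T$, and obeys $\|g_T\|_\infty\lesssim r^{(d-1)/2}\mu_1(2T)$, hence $\|g_T\|_{L^p}^p\lesssim r^{p(d-1)/2}\mu_1(2T)^p|4T|$.

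\emph{Decoupling against $\mu_2$.} Grouping good tubes by cap, $F=\sum_\theta F_\theta$ with $F_\theta$ Fourier-supported in a $r^{-1/2+\beta}$-cap of $\{|\xi|=r\}$, so Bourgain--Demeter $\ell^2$-decoupling for the sphere at the Stein--Tomas exponent $p=\frac{2(d+1)}{d-1}$ gives $\|F\|_{L^p}\lesssim_\eps r^{\eps/2}\big(\sum_\theta\|F_\theta\|_{L^p}^2\big)^{1/2}$. To pass from $L^p(dx)$ to $L^2(\mu_2)$ I would partition $\spt\mu_2\cap B(0,1)$ into $r^{-1/2+\beta}$-cubes $q$: on each, Hölder gives $\int_q|F|^2\,d\mu_2\le\mu_2(q)^{1-2/p}\big(\int_q|F|^p\,d\mu_2\big)^{2/p}$ with $\mu_2(q)\lesssim r^{(-1/2+\beta)\alpha}$ by the $\alpha$-dimensionality of $\mu_2$, while the goodness bound $\mu_2(4T)\le r^{-\alpha/2+\eps_0}$ controls, on average over $q$, the number of good tubes meeting $q$ weighted by $\mu_2$ (this is the refined-decoupling input). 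Summing over $q$, inserting the decoupling inequality and the wave-packet bounds above, and using Plancherel together with the bounded overlap of the tubes to recognize the tube sum as $\lesssim r^{-(d-1)}\int|\hmu_1|^2\psi_r\,d\xi$, the powers of $r$ collect — exactly as in \cite[Section 4]{du2021improved} — to $r^{-\alpha d/(d+1)}$ up to $r^\eps$, provided $\eps_0$ (and then $\beta$) is chosen small enough in terms of $\eps$ that the $\eps_0$- and $\beta$-losses are absorbed; the exponent $\tfrac{\alpha d}{d+1}$ is forced by $\alpha(1-\tfrac2p)=\tfrac{2\alpha}{d+1}$ weighed against the $\tfrac d2$ coming from the transverse tube dimensions.

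I expect the main obstacle to be precisely this exponent bookkeeping in the last step: correctly interleaving the decoupling constant, the density factor $\mu_2(q)^{1-2/p}$, the goodness bound $\mu_2(4T)\le r^{-\alpha/2+\eps_0}$, and the $L^p$ wave-packet norms so that everything collapses to exactly $r^{-\alpha d/(d+1)+\eps}\,r^{-(d-1)}\int|\hmu_1|^2\psi_r\,d\xi$. The remaining ingredients — the Fourier-support reductions, the wave-packet description of $\hsigma_r$, and the applicability of Bourgain--Demeter decoupling — are routine and parallel \cite[Section 5]{guth2020falconer} and \cite[Section 4]{du2021improved}; no idea beyond those papers is needed here, since the novelty of the present work lives entirely in Section \ref{sec:prop2.1}.
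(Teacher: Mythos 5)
Your single-scale reduction, wave-packet description, and the identification of the three inputs (decoupling, the goodness bound $\mu_2(4T)\le R_j^{-\alpha/2+\eps_0}$, the $\alpha$-dimensionality of $\mu_2$) all match the paper, and you are right that nothing from Section \ref{sec:prop2.1} is needed once $r>10R_0$ kills the $x$-dependent piece $M_0(\mu_1|_{G_0(x)})$. But there is a genuine gap in the middle step, not merely exponent bookkeeping. You invoke ordinary Bourgain--Demeter $\ell^2$ decoupling, $\|F\|_{L^p}\lesssim_\eps r^{\eps/2}(\sum_\theta\|F_\theta\|_{L^p}^2)^{1/2}$, and then say the goodness bound ``controls, on average over $q$, the number of good tubes meeting $q$.'' That counting information cannot be fed into standard decoupling after the fact: once you have replaced $\|F\|_{L^p}$ by $(\sum_T\|f_T\|_{L^p}^2)^{1/2}$, the sum is over \emph{all} good tubes in the shell, with no discount for how sparsely they actually populate the support of $\mu_2$. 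What the paper uses instead is the refined decoupling estimate, Theorem \ref{thm: dec} (= \cite[Cor.\ 4.3]{guth2020falconer}), which carries the extra factor $(M/W)^{1/2-1/p}$ where $M$ is the maximal number of contributing tubes per $r^{-1/2}$-cube in $Y$ and $W=|\mathbb{W}|$. It is exactly this factor, combined with the incidence inequality $M\,\mu_2(\mathcal N_{r^{-1/2}}(Y_M))\lesssim |\mathbb{W}_\lambda|\,r^{-\alpha/2+\eps_0}$ coming from goodness, that produces the gain $r^{-\alpha/2+\eps_0(1-2/p)}$ and hence, after combining with the density bound $\|\mu_2*\eta_{1/r}\|_\infty\lesssim r^{d-\alpha}$, the exponent $-\alpha d/(d+1)$. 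Standard decoupling gives no $(M/W)$ factor, and the resulting exponent is strictly worse.

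Using Theorem \ref{thm: dec} in turn forces two pieces of setup that your sketch omits: (i) a dyadic pigeonholing on $\|f_T\|_{L^p}$ to pass to a sub-family $\mathbb{W}_\lambda$ of tubes with comparable $L^p$ norms (a hypothesis of the refined decoupling theorem), and (ii) a dyadic pigeonholing on the incidence count $M$ to isolate the set $Y_M$ of $r^{-1/2}$-cubes meeting $\sim M$ tubes. Also, the paper passes from $d\mu_2$ to Lebesgue measure by mollifying, $\mu_2\mapsto \mu_2*\eta_{1/r}$, so that H\"older on $Y_M$ produces $\int_{Y_M}|f_\lambda|^p\,dx$ (the object decoupling actually controls) times $\int_{Y_M}|\mu_2*\eta_{1/r}|^{p/(p-2)}\,dx$; your H\"older directly against $\mu_2$ on a cube leaves you with $\int_q|F|^p\,d\mu_2$, which decoupling does not control and which is not immediately replaceable by a Lebesgue average since $F$ itself is not locally constant at scale $r^{-1/2}$ (only the individual $f_T$'s are). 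So the fix is structural: replace standard Bourgain--Demeter by Theorem \ref{thm: dec}, add the two dyadic pigeonholings to satisfy its hypotheses, use the goodness bound to control $M/W$, and use the mollified $\mu_2*\eta_{1/r}$ to make the H\"older step compatible with Lebesgue-measure decoupling; with those changes the rest of your outline reproduces the paper's computation.
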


\begin{lemma}\label{lem:small_r}
    For any $\alpha > 0$, $r > 0$, we have
    \begin{equation*}
        \int_{E_2} |\mu_{1,g}^x * \hsigma_r (x)|^2 \, d\mu_2 (x) \lesim (r+1)^{d-1}r^{-(d-1)} \,.
    \end{equation*}
\end{lemma}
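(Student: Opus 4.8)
The plan is to reduce the claimed $L^2(\mu_2)$ bound to the pointwise estimate
\[
|\mu_{1,g}^x * \hsigma_r(x)| \lesssim 1 \qquad \text{for every } x \in E_2 \text{ and every } r > 0;
\]
this already gives the lemma, since $\mu_2$ is a probability measure and $(r+1)^{d-1} r^{-(d-1)} = (1 + r^{-1})^{d-1} \ge 1$. (The $\alpha$-dimensionality of $\mu_1$ is not needed here, only that $\mu_1$ is a probability measure on $E_1$ with $\dist(E_1, E_2) \gtrsim 1$.)

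To prove the pointwise bound I would first split the good measure by frequency, writing $\mu_{1,g}^x = M_0(\mu_1|_{G_0(x)}) + \sum_{j \ge 1} \nu_j$ where $\nu_j := \sum_{T \in \T_j,\, T \text{ good}} M_T\mu_1$ (this part does not depend on $x$). The key point is a Fourier-support near-disjointness. The Fourier transform of $\hsigma_r$ is (a reflection of) $\sigma_r$, supported on the sphere $\{|\xi| = r\}$; $M_0 g$ has frequency support in $\{|\xi| \le 2R_0\}$ for any $g$; and since $M_T\mu_1 = \eta_T \cdot (\mu_1 * \cpsi_{j,\tau})$ with $\widehat{\eta_T}$ decaying rapidly off a ball of radius $\sim R_j^{1/2-\beta} \ll R_j$ while $\psi_{j,\tau}\widehat{\mu_1}$ is supported in the annulus $\{|\xi| \sim R_j\}$, one gets $|\widehat{M_T\mu_1}(\xi)| \le \RapDec(R_j)$ for $|\xi| \notin [R_j/4, 4R_j]$. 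Hence $M_0(\mu_1|_{G_0(x)}) * \hsigma_r \equiv 0$ unless $r \le 2R_0$, and $\nu_j * \hsigma_r = \RapDec(R_j)$ unless $R_j \sim r$; as $R_j = 2^j R_0$, only $O(1)$ scales $j$ survive, and summing the $\RapDec(R_j)$ errors over the $\lesssim R_j^{d-1}$ tubes in $\T_j$ and over all $j \ge 1$ leaves only $\RapDec(R_0)$.

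It then remains to bound the $O(1)$ surviving pieces pointwise by $\lesssim 1$. For $M_0(\mu_1|_{G_0(x)}) * \hsigma_r(x)$ this is immediate from $\norm{\hsigma_r}_{L^\infty} \le \sigma_r(\R^d) = 1$ and $\norm{M_0(\mu_1|_{G_0(x)})}_{L^1} \le \norm{\cpsi_0}_{L^1}\, \mu_1(\R^d) \lesssim 1$. For a surviving $\nu_j$ (so $R_j \sim r$) I would combine two facts: (i) $\nu_j$ is, up to a $\RapDec(R_j)$ tail, supported in the $C R_j^{-1/2+\beta}$-neighborhood of $E_1$, because the factor $\mu_1 * \cpsi_{j,\tau}$ in each $M_T\mu_1$ is $\RapDec(R_j)$ away from that neighborhood; and (ii) $\norm{\nu_j}_{L^1} \le \sum_{T \in \T_j} \big( C\mu_1(2T) + \RapDec(R_j) \big) \lesssim R_j^{(d-1)/2}$, using Lemma \ref{lem:types of tube}(d), the bounded overlap of $\T_{j,\tau}$ (so $\sum_{T \in \T_{j,\tau}} \mu_1(2T) \lesssim 1$) and the fact that there are $\sim R_j^{(d-1)/2}$ directions. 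Since $x \in E_2$ and $\dist(E_1, E_2) \gtrsim 1$, every $z$ in the essential support of $\nu_j$ satisfies $|x-z| \gtrsim 1$, so $|\hsigma_r(x-z)| \lesssim (1+r|x-z|)^{-(d-1)/2} \lesssim (1+r)^{-(d-1)/2} \sim R_j^{-(d-1)/2}$, whence $|\nu_j * \hsigma_r(x)| \lesssim R_j^{-(d-1)/2}\, \norm{\nu_j}_{L^1} + \RapDec(R_j) \lesssim 1$. Summing the $M_0$ piece, the $O(1)$ surviving $\nu_j$ pieces, and the $\RapDec(R_0)$ error yields the pointwise bound, and then squaring and integrating against $\mu_2$ finishes the proof.

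The main obstacle is precisely that $\mu_{1,g}^x$ has unbounded total variation: already at a single scale $j$, $\norm{\nu_j}_{L^1}$ can be of size $R_j^{(d-1)/2}$ and the sum over scales diverges, so the trivial bound $|\mu_{1,g}^x * \hsigma_r(x)| \le \norm{\hsigma_r}_{L^\infty} \norm{\mu_{1,g}^x}_{TV}$ is useless. Everything hinges on the frequency localization, which restricts the sum to the $O(1)$ scales with $R_j \sim r$, together with the numerical matching that the sphere-decay exponent $\frac{d-1}{2}$ of $\hsigma_r$, evaluated at distance $\gtrsim 1$, exactly cancels the $R_j^{(d-1)/2}$ growth of $\norm{\nu_j}_{L^1}$. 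The remaining care needed is bookkeeping: the frequency supports above are not genuinely compact, only rapidly decaying, so one must check that the accumulated $\RapDec$ errors — summed over all $\lesssim R_j^{d-1}$ tubes and all scales $j$ — stay negligible, which they do because $R_j = 2^j R_0$ grows geometrically.
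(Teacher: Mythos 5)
Your proof is correct, and it takes a genuinely different route from the paper's. The paper works on the Fourier side: it bounds $|\mu_{1,g}^x * \hsigma_r(x)|$ by $\norm{\widehat{\mu_{1,g}^x}}_{L^2(d\sigma_r)}$ via Cauchy--Schwarz, then invokes the approximate-orthogonality argument from \cite[Proposition 5.3]{guth2020falconer} to get $\norm{\widehat{\mu_{1,g}^x}}^2_{L^2(d\sigma_r)} \lesssim r^{-(d-1)} \int (|\widehat{\mu_1|_{G_0(x)}}|^2 + |\widehat{\mu_1}|^2)\psi_r\,d\xi$, and finishes with $\norm{\hmu_1}_\infty \le 1$ and $\int \psi_r \lesssim (r+1)^{d-1}$. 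You instead work in physical space: after using the exact frequency support of $\widehat{\hsigma_r}$ on $\{|\xi|=r\}$ to isolate the $O(1)$ scales with $R_j \sim r$ (plus $\RapDec(R_0)$ tails), you pit the sphere-measure decay $|\hsigma_r(x-z)| \lesssim (1+r|x-z|)^{-(d-1)/2} \lesssim R_j^{-(d-1)/2}$ at distance $|x-z| \gtrsim 1$ against the wave-packet count $\norm{\nu_j}_{L^1} \lesssim R_j^{(d-1)/2}$, and the two exponents cancel. Your approach is more self-contained (no reliance on the GIOW orthogonality lemma as a black box), exhibits the cancellation $\frac{d-1}{2}$ vs.\ $\frac{d-1}{2}$ explicitly, and makes essential use of the separation $\dist(E_1,E_2)\gtrsim 1$ at this step rather than the Frostman condition on $\mu_1$; the paper's is shorter by citation and yields the weaker but still sufficient bound $(r+1)^{d-1}r^{-(d-1)}$, which matters only insofar as it is $\gtrsim 1$. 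Both are valid proofs of the stated lemma.
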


\begin{proof}[Proof of Proposition \ref{mainest2}, given Lemmas \ref{lem:big_r} and \ref{lem:small_r}] 
Note that
    \begin{equation*}
        d_*^x (\mu_{1,g}^x) (t) = t^{d-1} \mu_{1,g}^x * \sigma_t (x).
    \end{equation*}
    Since $\mu_{1,g}^x$ is essentially supported in the $R_0^{-1/2+\beta}$ neighborhood of $E_1$, for $x \in E_2$, we only need to consider $t \sim 1$. Hence, up to a loss of $\text{RapDec}(R_0)$ which is negligible in our argument, we have
    \begin{align*}
        \int \norm{d_*^x (\mu_{1,g}^x)}^2_{L^2} \,d\mu_2 (x) &\lesim \int_0^\infty \int |\mu_{1,g}^x * \sigma_t (x)|^2 \, d\mu_2 (x) t^{d-1} dt \\
        &= \int_0^\infty \int |\mu_{1,g}^x * \hsigma_r (x)|^2 \, d\mu_2 (x) r^{d-1} dr,
    \end{align*}
    where in the second step, we used a limiting process and an $L^2$-identity proved by Liu \cite[Theorem 1.9]{LiuL2}: for any Schwartz function $f$ on $\mathbb R^d, d\geq 2$, and any $x\in \mathbb R^d$,
$$
\int_0^\infty |f*\sigma_t(x)|^2\, t^{d-1}\,dt = 
\int_0^\infty |f*\hsigma_r(x)|^2\, r^{d-1}\,dr.$$

    For $r \le 10R_0$ we use Lemma \ref{lem:small_r}, and for $r > 10R_0$ we use Lemma \ref{lem:big_r}. The small $r$ contribution to $\int \norm{d_*^x (\mu_{1,g}^x)}^2_{L^2} d\mu_2 (x)$ is
    \begin{align*}
        \int_0^{10R_0} (r+1)^{d-1} \, dr \lesim R_0^d.
    \end{align*}
   The large $r$ contribution is (dropping the negligible $\RapDec(r)$ term)
    \begin{align*}
        &\int_{10R_0}^\infty \int_{\R^d} r^{-\frac{\alpha d}{d+1}+\eps} \psi_r (\xi) |\hmu_1 (\xi)|^2 \, d\xi dr \\
        &\lesim \int_{\R^d} |\xi|^{-\frac{\alpha d}{d+1}+\eps} |\hmu_1 (\xi)|^2 \, d\xi.
    \end{align*}
The proof of Proposition \ref{mainest2} is thus complete upon verification of Lemmas \ref{lem:big_r} and \ref{lem:small_r}.
\end{proof}

\begin{proof}[Proof of Lemma \ref{lem:small_r}]
We follow the proof of Proposition 5.3 in \cite{guth2020falconer}, case $r < 10R_0$.
    Since $\mu_2$ is a probability measure, it suffices to upper bound $\sup_x |\mu_{1,g}^x * \hsigma_r (x)|$. Fix $x$ and note that
    \begin{equation*}
        |\mu_{1,g}^x * \hsigma_r (x)|^2 \le \norm{\widehat{\mu_{1,g}^x}}^2_{L^1 (d\sigma_r)} \le \norm{\widehat{\mu_{1,g}^x}}^2_{L^2 (d\sigma_r)}\,.
    \end{equation*}
    Then by the approximately orthogonal argument in \cite[Proof of Proposition 5.3]{guth2020falconer}, we have
    \begin{equation*}
        \norm{\widehat{\mu_{1,g}^x}}^2_{L^2 (d\sigma_r)} \lesim r^{-(d-1)} \int (|\widehat{\mu_1|_{G_0 (x)}}|^2 + |\widehat{\mu_1}|^2) \psi_r d\xi.
    \end{equation*}
     Finally, since $\norm{\hmu_1}_{L^\infty} \le \norm{\mu_1}_{L^1} = 1$, $\norm{\widehat{\mu_1|_{G_0 (x)}}}_{L^\infty} \le \norm{\mu_1|_{G_0 (x)}}_{L^1} \le 1$, and $\int \psi_r \,d\xi \lesim (r+1)^{d-1}$, we get the desired result.
\end{proof}

\subsection{Refined decoupling estimates}
The key ingredient in the proof of Lemma \ref{lem:big_r} is the following refined decoupling theorem, which is derived by applying the $l^2$ decoupling theorem of Bourgain and Demeter \cite{BDdecoupling} at many different scales. 

Here is the setup. Suppose that $S \subset \R^d$ is a compact and strictly convex $C^2$ hypersurface with Gaussian curvature $\sim 1$. For any $\eps>0$, suppose there exists $0<\beta \ll \eps$ satisfying the following.   Suppose that the 1-neighborhood of $R S$ is partitioned into $R^{1/2} \times ... \times R^{1/2} \times 1$ blocks $\theta$.  For each $\theta$, let $\mathbb{T}_\theta$ be a set of finitely overlapping tubes of dimensions $R^{-1/2 + \beta} \times \cdots \times R^{-1/2 + \beta} \times 1$ with long axis perpendicular to $\theta$, and let $\mathbb{T} = \cup_\theta \mathbb{T}_\theta$. Each $T\in \T$ belongs to $\T_{\theta}$ for a single $\theta$, and we let $\theta(T)$ denote
this $\theta$. We say that $f$ is microlocalized to $(T,\theta(T))$ if $f$ is essentially supported in
$2T$ and $\widehat{f}$ is essentially supported in $2\theta(T)$.

\begin{theorem}\label{thm: dec}{\cite[Corollary 4.3]{guth2020falconer}}
Let $p$ be in the range $2 \le p \le \frac{2(d+1)}{d-1}$.  For any $\eps>0$, suppose there exists $0<\beta\ll\eps$ satisfying the following. Let $\mathbb{W} \subset \mathbb{T}$ and suppose that each $T \in \mathbb{W}$ lies in the unit ball.  Let $W = | \mathbb{W}|$.  Suppose that $f = \sum_{T \in \mathbb{W}} f_T$, where $f_T$ is microlocalized to $(T, \theta(T))$.  Suppose that $\| f_T \|_{L^p}$ is $\sim$ constant for each $T \in \mathbb{W}$.   Let $Y$ be a union of $R^{-1/2}$-cubes in the unit ball each of which intersects at most $M$ tubes $T \in \mathbb{W}$.  Then
	$$ \| f \|_{L^p(Y)} \lesssim_\eps R^\eps \left(\frac{M}{W} \right)^{\frac{1}{2} - \frac{1}{p}} \left(\sum_{T \in \mathbb{W}} \| f_T \|_{L^p}^2 \right)^{1/2}. $$
\end{theorem}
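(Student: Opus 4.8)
\emph{Proof proposal.} The plan is to deduce Theorem~\ref{thm: dec} from the Bourgain--Demeter $\ell^2$-decoupling theorem \cite{BDdecoupling} by running it in a localized way across a bounded number of intermediate scales, as in \cite{guth2020falconer}. First I would reduce the setup. Using the hypothesis that $\norm{f_T}_{L^p}$ is $\sim$ constant over $T\in\W$, normalize so that $\norm{f_T}_{L^p}\equiv 1$; then the desired conclusion becomes $\norm{f}_{L^p(Y)}\lesssim_\eps R^\eps M^{1/2-1/p}W^{1/p}$, which under this normalization coincides with the asserted bound $R^\eps(M/W)^{1/2-1/p}\big(\sum_T\norm{f_T}_{L^p}^2\big)^{1/2}$. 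By a dyadic pigeonhole in the number of tubes of $\W$ meeting an $R^{-1/2}$-cube, at a harmless $O_\eps(R^\eps)$ cost, reduce to the case where \emph{every} $R^{-1/2}$-cube contained in $Y$ meets $\sim M$ tubes. Finally, rescaling the unit ball to $B_R$, we may assume $\widehat f$ is supported in the $1$-neighborhood of $R\cdot S$, each $f_\theta$ is microlocalized to an $R^{-1/2}$-cap together with its dual tube, and $Y$ is a union of $R^{1/2}$-balls each meeting $\sim M$ tubes. The exponent restriction $2\le p\le\frac{2(d+1)}{d-1}$ is precisely the range in which $\ell^2$-decoupling holds and is inherited throughout.

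The heart of the argument is that on each $R^{1/2}$-ball $B'\subset Y$ only the $\lesssim M$ caps $\theta$ whose dual wave packet meets $B'$ contribute nontrivially, so $f$ restricted to $B'$ is essentially $\sum_{\theta\text{ active on }B'}f_\theta$. Decoupling this sum into the individual $f_\theta$ cannot be done in one shot on a ball of radius only $R^{1/2}$; instead I would iterate Bourgain--Demeter $\ell^2$-decoupling through intermediate cap-widths $R^{-1/4},R^{-1/8},\dots,R^{-1/2}$ over a bounded number $N=N(\eps)$ of steps (each admissible on the relevant intermediate ball after parabolic rescaling), paying $R^{\eps/N}$ per step, to obtain $\norm{f}_{L^p(B')}\lesssim R^{\eps}\big(\sum_{\theta\text{ active on }B'}\norm{f_\theta}_{L^p(w_{B'})}^2\big)^{1/2}$ with at most $M$ terms in the sum. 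Then raise to the power $p$, sum over the finitely overlapping $R^{1/2}$-balls $B'\subset Y$, apply Hölder to the $\le M$-term sum to replace the $\ell^2$-sum by $M^{1/2-1/p}$ times the $\ell^p$-sum, exchange the order of summation over $B'$ and $\theta$, and use the finite overlap of the weights $w_{B'}$ to collapse $\sum_{B'}\norm{f_\theta}_{L^p(w_{B'})}^p\lesssim\norm{f_\theta}_{L^p}^p$. Since the $\norm{f_\theta}_{L^p}$ are all equal, the resulting $\ell^p$-sum equals $M^{1/2-1/p}W^{1/p}$ up to constants, which is exactly the target.

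The step I expect to be the main obstacle is maintaining uniformity through all $N$ intermediate scales: to run the localized decoupling losslessly one needs, at each stage, that the surviving pieces have comparable $L^p$ norms and that the number of active wave packets is essentially constant over the relevant balls, which forces a pigeonhole at every step. One then has to check that the accumulated $(\log R)^{O(N)}$ factor together with the $N$ per-step losses $R^{\eps/N}$ multiply to at most $R^\eps$ once $N=N(\eps)$ is fixed, and that the $R^{-1/2+\beta}$-thickening of the tubes (as opposed to the true $R^{-1/2}$ wave packets) is absorbed by the uncertainty principle --- this is where the hypothesis $\beta\ll\eps$ is used. The cleanest bookkeeping for all of this is the full multi-scale induction of \cite{guth2020falconer, du2021improved}; the delicate point is verifying that none of these accumulated errors degrades the final exponent $M^{1/2-1/p}$.
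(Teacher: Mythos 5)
The paper does not prove Theorem~\ref{thm: dec}; it is cited verbatim from \cite[Corollary 4.3]{guth2020falconer}, so there is no internal argument to compare against. That said, the sketch you give does not constitute a valid proof, because the central local estimate you posit,
\[
\|f\|_{L^p(B')} \lesssim R^\eps \Bigl(\sum_{\theta \text{ active on } B'} \|f_\theta\|^2_{L^p(w_{B'})}\Bigr)^{1/2}
\]
on a single $R^{-1/2}$-cube $B'$ (after rescaling, an $R^{1/2}$-ball), is simply false, and neither iterating Bourgain--Demeter through intermediate cap-widths nor parabolic rescaling can rescue it. Decoupling over the $R$-sphere into $R^{1/2}\times\cdots\times R^{1/2}\times 1$ caps requires a spatial ball of radius $\gtrsim R$: the caps have angular width $R^{-1/2}$, and the curvature of the sphere only forces near-orthogonality between them over distances of order $R$; parabolic rescaling preserves the ratio of spatial scale to inverse cap-width, so it does not create the missing room. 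A concrete counterexample: take $M \lesssim R^{1/4}$ caps whose centers lie in a short arc of angular width $MR^{-1/2}$, all of whose dual tubes pass through a fixed $R^{-1/2}$-cube $q$, and choose each $f_\theta$ to be the corresponding modulated bump with phase zero at the center of $q$. Rescaling $q$ to unit scale sends $f|_q$ to an exponential sum with $M$ nearly-collinear, $1$-separated frequencies, so $\|f\|_{L^p(q)} \sim M^{1-1/p}\|f_\theta\|_{L^p(w_q)}$ by the Dirichlet-kernel bound; this exceeds the right-hand side of your local estimate by a factor $M^{1/2-1/p}$, a power of $R$ rather than $R^\eps$. This does not contradict Theorem~\ref{thm: dec} itself, because $\sum_T\|f_T\|_{L^p}^2$ there is a global sum over the full tubes, not over $T\cap B'$; but it does show that the local $\ell^2$-decoupling on $B'$ that you build on does not hold. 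With the only local decoupling actually available on $B'$ (into caps no finer than $R^{-1/4}$), your Hölder-then-sum scheme only recovers the trivial $M^{1/2}$-loss bound, not the $(M/W)^{1/2-1/p}$ gain.

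The argument in \cite{guth2020falconer} (Theorem 4.2, from which Corollary 4.3 follows) is a genuinely \emph{global} multiscale induction: one decouples, parabolically rescales, and pigeonholes from scale $R$ down through $O(1/\eps)$ intermediate scales, tracking at each scale how many of the surviving tubes meet each intermediate cube. The factor $(M/W)^{1/2-1/p}$ is generated by comparing, scale by scale, the $\ell^2$-expression produced by decoupling against the $\ell^p$-expression $\bigl(\sum_T\|f_T\|_{L^p}^p\bigr)^{1/p}$ being induced upon; it does not arise from a single local decoupling on $R^{-1/2}$-cubes followed by Hölder. Your downstream steps --- converting the $\ell^2$-sum to an $\ell^p$-sum under the comparable-norm hypothesis, and collapsing $\sum_{B'}\|f_\theta\|^p_{L^p(w_{B'})}$ by finite overlap --- are sound and do appear at the end of the correct proof, but the step that actually generates the gain has to be replaced wholesale by the global induction. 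What you flag as the ``main obstacle'' (uniformity and bookkeeping across scales) is indeed a real nuisance, but it is not the step that fails here; the step that fails is the one you present as the heart of the argument.
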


The proof of Lemma \ref{lem:big_r} using Theorem \ref{thm: dec} proceeds almost identically as in \cite[Lemma 4.1]{du2021improved}, as the $x$ dependence of the good measure doesn't exist in the regime $r>10 R_0$. We include the proof below for the sake of completeness.

\subsection{Proof of Lemma \ref{lem:big_r}}
Assume $r>10R_0$. By definition, 
\[
\mu_{1,g}^x* \hsigma_r=\sum_{j: R_j \sim r} \sum_{T\in \mathbb{T}_j: T \textrm{ good}} M_T \mu_1 * \hsigma_r+{\rm RapDec}(r).
\]
The key point to notice is that $\mu_{1,g}^x* \hsigma_r$ is independent of $x$.

The contribution of ${\rm RapDec}(r)$ is already taken into account in the statement of Lemma \ref{lem:big_r}. Hence without loss of generality we may ignore the tail ${\rm RapDec}(r)$ in the argument below.

Let $\eta_1$ be a bump function adapted to the unit ball and define
\[
f_T = \eta_1 \left( M_T \mu_1 * \hsigma_r \right).
\]
One can easily verify that $f_T$ is microlocalized to $(T,\theta(T))$.

Let $p=\frac{2(d+1)}{d-1}$. By dyadic pigeonholing, there exists $\lambda>0$ such that
\[
\int |\mu_{1,g}^x*\hsigma_r(x)|^2\,d\mu_2(x)\lesssim \log r \int | f_\lambda (x) |^2 d\mu_2(x),
\]where
\[
f_\lambda=\sum_{T\in \mathbb{W}_\lambda}f_T,\quad \mathbb{W}_\lambda:= \bigcup_{j: R_j\sim r} \Big\{ T\in \T_j: T \text{ good }, \| f_T \|_{L^p} \sim \lambda \Big\}.
\]

Next, we divide the unit ball into $r^{-1/2}$-cubes $q$ and sort them. Denote
\[
\mathcal{Q}_M:=\{ r^{-1/2}\textrm{-cubes } q: q \textrm{ intersects } \sim M \textrm{ tubes } T \in \mathbb{W}_\lambda \}.
\]
Let $Y_{ M} := \bigcup_{q \in \mathcal{Q}_{ M}} q$. Since there are only $\sim \log r$ many choices of $M$, there exists $M$ such that 
\[
\int |\mu_{1,g}^x*\hsigma_r(x)|^2\,d\mu_2(x)\lesssim (\log r)^2 \int_{Y_M} | f_\lambda (x) |^2 d\mu_2(x)\,.
\]

Since $f_\lambda$ only involves good wave packets, by considering the quantity
$$
\sum_{q\in \mathcal{Q}_M} \sum_{T\in \mathbb{W}_\lambda: T\cap q\neq \emptyset} \mu_2(q),
$$
we get
\begin{equation}\label{eqn: count}
    M \mu_2 (\mathcal{N}_{r^{-1/2}}(Y_M) ) \lesssim  | \mathbb{W}_\lambda | r^{-\frac{\alpha}{2}+\eps_0},
\end{equation}
where $\mathcal{N}_{r^{-1/2}}(Y_M)$ is the $r^{-1/2}$-neighborhood of $Y_M$.

The rest of the proof of Lemma \ref{lem:big_r} will follow from Theorem \ref{thm: dec} and estimate (\ref{eqn: count}).

By H\"older's inequality and the observation that $f_\lambda$ has Fourier support in the $1$-neighborhood of the sphere of radius $r$, one has
\[
\int_{Y_M}|f_\lambda(x)|^2\,d\mu_2(x)\lesssim \left(\int_{Y_{M}} | f_\lambda|^p \right)^{2/p} \left(\int_{Y_{M}} |\mu_2 * \eta_{1/r}|^{p/(p-2)} \right)^{1-2/p},
\]
where $\eta_{1/r}$ is a bump function with integral $1$ that is essentially supported on the ball of radius $1/r$.

To bound the second factor, we note that $\eta_{1/r} \sim r^d$ on the ball of radius $1/r$ and rapidly decaying off it. Using the fact that $\mu_2(B(x,t))\lesssim t^\alpha, \forall x\in \R^d, \forall t>0$, we have
$$
\|\mu_2*\eta_{1/r}\|_\infty \lesssim r^{d-\alpha}\,.
$$
Therefore,
\[
\begin{split}
\int_{Y_{M}} |\mu_2 * \eta_{1/r}|^{p/(p-2)} \lesssim & \|\mu_2 * \eta_{1/r}\|_{\infty}^{2/(p-2)} \int_{Y_M} d\mu_2*\eta_{1/r}\\
\lesssim  &r^{2(d-\alpha)/(p-2)}\mu_2(\mathcal{N}_{r^{-1/2}}(Y_M)).
\end{split}
\]By Theorem \ref{thm: dec}, the first factor can be bounded as follows:
\[
\begin{split}
\left(\int_{Y_{M}} | f_\lambda|^p \right)^{2/p}\lessapprox & \left(\frac{M}{\mathbb{W}_\lambda}\right) ^{1-2/p}\sum_{T\in\mathbb{W}_\lambda}\|f_T\|_{L^p}^2\\
\lesssim & \left(\frac{r^{-\frac{\alpha}{2}+\eps_0}}{\mu_2(\mathcal{N}_{r^{-1/2}}(Y_{M}))} \right)^{1-2/p} \sum_{T \in \mathbb{W}_\lambda} \| f_T \|_{L^p}^2,
\end{split}
\]where the second step follows from (\ref{eqn: count}).

Combining the two estimates together, one obtains
\[
\int_{Y_M}|f_\lambda(x)|^2\,d\mu_2(x) \lesssim r^{\frac{2d}{p}-\alpha(\frac 12 +\frac 1p) +O(\eps_0)} \sum_{T \in \mathbb{W}_\lambda} \| f_T \|_{L^p}^2.
\]
Observe that $\|f_T\|_{L^p}$ has the following simple bound:
\[
\begin{split}
\|f_T\|_{L^p}\lesssim & \|f_T\|_{L^\infty}|T|^{1/p}\lesssim \sigma_r(\theta(T))^{1/2}|T|^{1/p} \|\widehat{M_T\mu_1}\|_{L^2(d\sigma_r)}\\
= & r^{-(\frac{1}{2p}+\frac{1}{4})(d-1)+O(\beta)}\|\widehat{M_T\mu_1}\|_{L^2(d\sigma_r)}.
\end{split}
\]Plugging this back into the above formula, one obtains
\[
\begin{split}
\int_{Y_M}|f_\lambda(x)|^2\,d\mu_2(x) \lesssim & r^{\frac{2d}{p}-(\alpha+d-1)(\frac 12 +\frac 1p) +O(\eps_0)} \sum_{T \in \mathbb{W}_\lambda}\|\widehat{M_T\mu_1}\|_{L^2(d\sigma_r)}^2\\
\lesssim & r^{-\frac{\alpha d}{d+1}+\eps} r^{-(d-1)} \int | \hmu_1|^2 \psi_r \,d \xi,
\end{split}
\]where $p=2(d+1)/(d-1)$ and we have used orthogonality and chosen $\beta \ll \eps_0 \ll \eps$. The proof of Lemma \ref{lem:big_r} and hence Proposition \ref{mainest2} is complete.

\section{Proof of Theorem \ref{thm:distance_hausdorff}}\label{sec:hausdorff}
We will prove Theorem \ref{thm:distance_hausdorff} following the approach in \cite{liu2020hausdorff}. We shall use the following criteria to determine the Hausdorff dimension of pinned distance sets. 

\begin{lemma}\label{strategy}{\cite[Lemma 3.1]{liu2020hausdorff}}
Given a compact set $E\subset\R^d$, $x\in\R^d$ and a probability measure $\mu_E$ on $E$. Suppose there exist $\tau\in (0,1]$, $K\in\Z_+$, $\beta>0$ such that
	$$\mu_E(\{y: |y-x|\in D_k\})<2^{-k\beta}$$ 
	for any
	$$D_k=\bigcup_{j=1}^M I_j,$$
	where $k>K$, $M\leq 2^{k\tau}$ are arbitrary integers and each $I_j$ is an arbitrary interval of length $\approx 2^{-k}$. Then
	$$\dim_H(\Delta_x(E))\geq\tau. $$
\end{lemma}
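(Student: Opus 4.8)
I would prove Lemma \ref{strategy} by contradiction, via a mass distribution argument. Suppose $\dim_H(\Delta_x(E)) < \tau$, and fix $s$ with $\dim_H(\Delta_x(E)) < s < \tau$, so that $\cH^s(\Delta_x(E)) = 0$. The only fact about $\mu_E$ that the argument needs is that $\mu_E(\{y : |y-x| \in \Delta_x(E)\}) = 1$: the set $\{y : |y-x| \in \Delta_x(E)\}$ is Borel, since $\Delta_x(E)$ is compact (a continuous image of the compact set $E$) and $y \mapsto |y-x|$ is continuous, and it contains $E$, on which $\mu_E$ has full mass because $|y-x| \in \Delta_x(E)$ for every $y \in E$.

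The first step is to fix the scales. Given $\beta$ and $K$ as in the hypothesis, I would pick an integer $K' > K$ large enough that $\sum_{k > K'} 2^{-k\beta} < 1$, which is possible since $\beta > 0$. Since $\cH^s(\Delta_x(E)) = 0$ and $\Delta_x(E) \subset \R$, at scale $\delta = 2^{-K'-1}$ there is a countable cover $\Delta_x(E) \subset \bigcup_i J_i$ by intervals with $|J_i| < \delta$ and $\sum_i |J_i|^s < \tfrac14$. Sorting by dyadic length, set $\cJ_k = \{ i : 2^{-k-1} \le |J_i| < 2^{-k} \}$ and $D_k = \bigcup_{i \in \cJ_k} J_i$; every $J_i$ has length $< 2^{-K'-1}$, so only scales $k \ge K'+1$ occur.

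The key step is the counting bound that lets the hypothesis be invoked. Each $i \in \cJ_k$ contributes $|J_i|^s \ge 2^{-(k+1)s}$, hence $M_k := |\cJ_k| < \tfrac14 \cdot 2^{(k+1)s} = \tfrac14 \cdot 2^s \cdot 2^{ks} \le \tfrac12 \cdot 2^{ks} < 2^{k\tau}$ (using $s \le 1$ and $s < \tau$). In particular $\cJ_k$ is finite and $D_k$ is a union of $M_k \le 2^{k\tau}$ intervals each of length $\approx 2^{-k}$ (if one insists on exact dyadic intervals, replace each $J_i \in \cJ_k$ by the at most two dyadic intervals of length $2^{-k}$ covering it; this multiplies $M_k$ by a fixed constant, which is absorbed by shrinking the $\tfrac14$). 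Since $k \ge K'+1 > K$, the hypothesis applies and gives, for each $k$ with $\cJ_k \ne \emptyset$,
$$\mu_E(\{y : |y-x| \in D_k\}) < 2^{-k\beta}.$$

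Finally I would sum over scales. From $\Delta_x(E) \subset \bigcup_{k \ge K'+1} D_k$ we get $\{y : |y-x| \in \Delta_x(E)\} \subset \bigcup_{k \ge K'+1} \{y : |y-x| \in D_k\}$, so countable subadditivity of $\mu_E$ yields
$$1 = \mu_E(\{y : |y-x| \in \Delta_x(E)\}) \le \sum_{k \ge K'+1} \mu_E(\{y : |y-x| \in D_k\}) < \sum_{k > K'} 2^{-k\beta} < 1,$$
a contradiction. Hence $\cH^s(\Delta_x(E)) > 0$ for every $s < \tau$, i.e. $\dim_H(\Delta_x(E)) \ge \tau$. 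I do not expect a real obstacle here; the only point requiring care is the order of the choices — $K'$ must be chosen first (large, depending only on $\beta$), and only then the cover at fineness $\delta \approx 2^{-K'}$, so that the whole cover automatically sits at scales $k > K$ where the hypothesis is available, while the bound $\sum_i |J_i|^s < \tfrac14$ simultaneously forces $M_k \le 2^{k\tau}$ at every scale.
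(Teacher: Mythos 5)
Your proof is correct, and it is essentially the standard argument one would give for this statement (a contradiction via a covering with small $\cH^s$-content, sorted into dyadic scales and played against the non-concentration hypothesis); this matches the spirit of Liu's original proof. One small but harmless imprecision: in the final chain, summing the strict inequalities $\mu_E(\{y : |y-x|\in D_k\}) < 2^{-k\beta}$ over an infinite index set only yields $\le$ in general, but since the next comparison $\sum_{k>K'} 2^{-k\beta} < 1$ is already strict by the choice of $K'$, the contradiction stands.
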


The next proposition is a key step in the proof of Theorem \ref{thm:distance_hausdorff}, which can be viewed as a discretized variant of it.

\begin{prop}\label{prop:non-concentration}
    Let $d\geq 2, 1 \le k \le d-1$, $k-1 < \alpha \le k$, and $\tau < \min(f(\alpha), 1)$. There exists $\beta > 0$ depending on $\tau, \alpha, k$ such that the following holds for sufficiently small $\delta < \delta_0 (\tau, \alpha, k)$. Let $\mu_1, \mu_2$ be $\alpha$-dimensional measures with $\sim 1$ separation and constant $C_\alpha$ supported on $E_1, E_2$ respectively. Then there exists a set $F_\delta \subset E_2$ with $\mu_2 (F_\delta) \lesim \delta^{\beta^2}$ such that for all $x \in E_2 \setminus F_\delta$, there exists a set $W(x)$ that is contained within some $(2\delta^{\beta^2}, k)$-plate such that
    \begin{equation*}
        \mu_1 (\{ y : |x-y| \in J \} \setminus W(x)) \le \delta^{\beta^2/2},
    \end{equation*}
    where $J$ is any union of $\le \delta^{-\tau}$ many intervals each of length $\sim \delta$.
\end{prop}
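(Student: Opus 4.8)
The plan is to deduce Proposition~\ref{prop:non-concentration} from the two main estimates, Propositions~\ref{mainest1} and~\ref{mainest2}, by running the argument at a single dyadic scale $\delta = R_0^{-1}$ (rather than summing over the annuli $R_j$) and converting the $L^2$ bound on the pushforward into the non-concentration statement via an $L^2$-to-counting argument in the spirit of Liu~\cite{liu2020hausdorff}. First I would fix $\tau < \min(f(\alpha), 1)$ and choose $\eps > 0$ small so that $\lambda := d - \frac{\alpha d}{d+1} + \eps$ still satisfies the energy-type bound needed; the constraint $\tau < f(\alpha)$ is exactly what makes $\int |\xi|^{-\frac{\alpha d}{d+1}+\eps} |\hmu_1(\xi)|^2 \, d\xi$ comparable (after restricting $\hmu_1$ to a dyadic shell of radius $\sim \delta^{-1}$ and using the $\alpha$-dimensionality of $\mu_1$) to something like $\delta^{-\tau - O(\eps)}$, rather than the full (possibly infinite) energy. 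Concretely, replace $\mu_1$ by its mollification at scale $\delta$ and use that $\int_{|\xi| \sim \delta^{-1}} |\hmu_1(\xi)|^2 \, d\xi \lesssim \delta^{-\tau-\eps}$-type control; this is the discretized substitute for the finiteness of $I_\lambda(\mu_1)$ used in the proof of Theorem~\ref{thm: pinned}.

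Next, run Propositions~\ref{mainest1} and~\ref{mainest2} with $R_0 = \delta^{-1}$. Proposition~\ref{mainest1} produces a large subset $E_2' \subset E_2$ with $\mu_2(E_2') \ge 1 - R_0^{-\beta}$, and for each $x \in E_2'$ a set $G(x)$ with $B^d(0,10) \setminus G(x)$ contained in a $(R_0^{-\beta}, k)$-plate, such that $\|d_*^x(\mu_1|_{G(x)}) - d_*^x(\mu_{1,g}^x)\|_{L^1} \le R_0^{-\beta}$. Proposition~\ref{mainest2} gives $\int_{E_2} \|d_*^x(\mu_{1,g}^x)\|_{L^2}^2 \, d\mu_2(x) \lesssim \delta^{-\tau - O(\eps)} + \delta^{-d}$; choosing $\eps$ small relative to $\tau$ and absorbing (and noting $\delta^{-d}$ is harmless since it comes from small $r$ and $\tau < 1 \le d$, so really the dominant term after a more careful split is the energy term) we get the bound $\lesssim \delta^{-\tau'}$ for some $\tau' < \tau + O(\eps)$, still $< 1$. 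By Markov in $x$, there is a further set $F_\delta \subset E_2$ of $\mu_2$-measure $\lesssim R_0^{-\beta} + \delta^{(\text{something})} \lesssim \delta^{\beta^2}$ (after relabelling $\beta$) outside of which, simultaneously, $\|d_*^x(\mu_1|_{G(x)}) - d_*^x(\mu_{1,g}^x)\|_{L^1}$ is small and $\|d_*^x(\mu_{1,g}^x)\|_{L^2}^2 \lesssim \delta^{-\tau'-\beta^2}$. Set $W(x) := B^d(0,10) \setminus G(x)$, which lies in a $(2\delta^{\beta^2}, k)$-plate once $R_0^{-\beta} \le \delta^{\beta^2}$, i.e. once $\beta$ is chosen appropriately relative to the earlier $\beta$ of Proposition~\ref{mainest1}.

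The final step is the non-concentration estimate itself. Fix $x \in E_2 \setminus F_\delta$ and let $J = \bigcup_{i=1}^M I_i$ with $M \le \delta^{-\tau}$ and $|I_i| \sim \delta$. Write $\mu_1(\{y : |x-y| \in J\} \setminus W(x)) = d_*^x(\mu_1|_{G(x)})(J)$. Approximate this by $d_*^x(\mu_{1,g}^x)(J)$ at the cost of the $L^1$-error $R_0^{-\beta} \le \delta^{\beta^2/4}$. Then $|d_*^x(\mu_{1,g}^x)(J)| \le \|d_*^x(\mu_{1,g}^x)\|_{L^2} \cdot |J|^{1/2} \lesssim \delta^{-\tau'/2 - \beta^2/2} \cdot (M\delta)^{1/2} \lesssim \delta^{-\tau'/2 + (1-\tau)/2 - \beta^2/2}$; since $\tau' < \tau + O(\eps)$ and $\eps$ is small, the exponent $-\tau'/2 + (1-\tau)/2 - \beta^2/2$ is a small positive number, say $\ge \beta^2/2$ after adjusting constants, giving the claimed bound $\le \delta^{\beta^2/2}$. (Here one should be slightly careful: $d_*^x(\mu_{1,g}^x)$ is complex-valued but $d_*^x(\mu_1|_{G(x)})$ is a genuine positive measure, so ``evaluating on $J$'' should be done by pairing with a smooth bump adapted to $J^{(\delta)}$, which only changes things by harmless constants and $\RapDec$ errors, exactly as in the proof of Theorem~\ref{thm: pinned}.)

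\textbf{Main obstacle.} The delicate point is matching the power of $\delta$ coming from Proposition~\ref{mainest2} with the Cauchy--Schwarz loss $|J|^{1/2} \sim \delta^{(1-\tau)/2}$: one needs $\frac{1}{2}(d - \frac{\alpha d}{d+1}) - \frac{\alpha}{2}$ (the effective exponent after restricting the energy integral to the shell and using $\alpha$-dimensionality) to be strictly less than $\frac{1-\tau}{2}$, which rearranges precisely to $\tau < \frac{2d+1}{d+1}\alpha - (d-1) = f(\alpha)$. So the bookkeeping of which $\eps$'s and $\beta$'s are small relative to which, and verifying this exponent inequality has the correct numerology (including that the $R_0^d$ term and the restriction-to-shell step do not spoil it), is where the real care is needed; everything else is a reorganization of the proof of Theorem~\ref{thm: pinned} at a single scale.
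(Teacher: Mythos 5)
Your proposal follows essentially the same route as the paper: apply Propositions \ref{mainest1} and \ref{mainest2} to the mollified measure $\mu_1^\delta$ at a single dyadic scale, set $W(x)$ to be (a slight thickening of) $B^d(0,10)\setminus G(x)$, and convert the $L^2$ bound from Proposition \ref{mainest2} into the non-concentration statement via Cauchy--Schwarz exploiting $|J^{(\delta)}|\lesssim\delta^{1-\tau}$. The numerology you identify in your ``Main obstacle'' paragraph — that the required exponent inequality is precisely $\tau<f(\alpha)$ — is exactly the right bookkeeping.

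However, there is a concrete error that would break the proof as written: you choose $R_0=\delta^{-1}$. With that choice, the additive term $R_0^d$ in Proposition \ref{mainest2} equals $\delta^{-d}$, which for $d\ge 2$ strictly dominates the energy term, and Cauchy--Schwarz then yields only $\|d_*^x(\mu_{1,g}^{\delta,x})\|_{L^2}\cdot|J^{(\delta)}|^{1/2}\lesssim\delta^{-d/2}\cdot\delta^{(1-\tau)/2}$, whose exponent $(1-\tau-d)/2<0$ is hopelessly large. You note that ``$\delta^{-d}$ is harmless'' and allude to ``a more careful split,'' but that is not a fix: the $R_0^d$ term is real (it comes from the small-$r$ regime, Lemma \ref{lem:small_r}), and the only way to make it negligible is to take $R_0$ to be a small power of $\delta^{-1}$. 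The paper sets $R_0=\delta^{-\beta}$, so $R_0^d=\delta^{-d\beta}\ll\delta^{-(1-\tau)}$ for $\beta$ small, which is what makes the argument close.

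Two smaller points. First, your intermediate claim that $\int|\widehat{\mu_1^\delta}(\xi)|^2|\xi|^{-\frac{\alpha d}{d+1}+\eps}\,d\xi\lesssim\delta^{-\tau-O(\eps)}$ has the wrong exponent; a dyadic decomposition together with $\int_{|\xi|\sim R}|\hmu_1|^2\lesssim_\eps R^{d-\alpha+\eps}$ gives $\lesssim\delta^{-\max(0,1-f(\alpha))-O(\eps)}$, which is not $\delta^{-\tau-O(\eps)}$ in general; your later computation in the ``Main obstacle'' paragraph is the consistent one, and this is what the paper uses (after moving the $\delta^{-(1-\tau)}$ factor to the other side so that the modified energy integral is shown to be finite). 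Second, you set $W(x)=B^d(0,10)\setminus G(x)$, but the construction of $G(x)$ is relative to $\mu_1^\delta$, not $\mu_1$, and passing from $d_*^x(\mu_1^\delta|_{G(x)})$ back to $\mu_1(\{y:|x-y|\in J\}\setminus W(x))$ requires the $\delta$-thickening $W(x)=H(x)^{(\delta)}$: if $y\notin W(x)$ then $B(y,\delta)\subset G(x)$, so the mollification of $\mu_1$ over $G(x)$ dominates $\mu_1$ off $W(x)$ up to adjustment of $J$ to $J^{(\delta)}$. You gesture at mollification at the start but don't carry out this domination step, which is where the factor $2$ in the $(2\delta^{\beta^2},k)$-plate comes from.
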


Next we prove Proposition \ref{prop:non-concentration}. The proof reproduces the argument in \cite[Section 4]{liu2020hausdorff} with some minor simplifications.

\begin{proof}
    First, instead of working with $\mu_1$, we will use a mollified version that removes the high frequency contributions. Let $\phi \in C_0^\infty (\R^d)$ be supported on $B(0, 1)$ and satisfy $\phi \ge 0$, $\int \phi = 1$, and $\phi \ge 1$ on $B(0, \frac{1}{2})$. This $\phi$ will be fixed for the rest of the proof (in particular, it does not depend on $\delta$, and subsequent implicit constants may depend on $\phi$). Let $\phi_\delta (\cdot) = \delta^{-d} \phi(\delta^{-1} \cdot)$ and $\mu_1^\delta = \mu_1 * \phi_\delta$. The crucial point is that $\mu_1^{\delta}$ is supported in a $\delta$-neighborhood of the support of $\mu_1$ and in fact serves as a good approximation for $\mu_1$ down to scale $\delta$, but $\mu_1^\delta$ is rapidly decaying at frequencies much larger than $\delta^{-1}$.

    Fix a small $\eps > 0$. We apply Proposition \ref{mainest1} with $R_0 = \delta^{-\beta}$ and
    the measure $\mu_1^\delta$, which is still an $\alpha$-dimensional measure with constant comparable to $C_\alpha$ (independent of $\delta$).
    (We make $\delta$ sufficiently small to ensure that $R_0$ is sufficiently large.) Then there is a subset $E_2' \subset E_2$ so that $\mu_2(E_2') \ge 1 - \delta^{\beta^2}$ and for each $x \in E_2'$, there exists a set $G(x) \subset \R^d$ where $B^d(0,10) \setminus G(x)$ is contained within some $(\delta^{\beta^2}, k)$-plate $H(x)$ such that
\begin{equation*} 
        \norm{d_*^x (\mu^\delta_1|_{G(x)}) - d_*^x (\mu_{1,g}^{\delta, x})}_{L^1} \le \delta^{\beta^2}.
\end{equation*}
We will define $W(x) = H(x)^{(\delta)}$, which satisfies the condition for $W(x)$. Let
    \begin{equation*}
        \cJ^\tau_\delta = \left\{ \bigcup_{j=1}^M I_j : M \le \delta^{-\tau}, \text{ each } I_j \text{ is an open interval of length} \sim \delta  \right\}.
    \end{equation*}
Let $F'$ be the set of points $x \in E_2'$ such that
    \begin{equation*}
        \sup_{J \in \cJ^\tau_\delta} \int_{J^{(\delta)}} d_*^x (\mu_1^{\delta}|_{G(x)}) (t) \, dt \ge \delta^{\beta^2/2}.
    \end{equation*}
Now, define $F_\delta := F' \cup (E_2 \setminus E_2')$. Then for any $x\in E_2 \setminus F_\delta = E_2' \setminus F'$, we can claim the following.

    \textbf{Claim.} For all $x \in E_2' \setminus F'$ and $J \in \cJ_\delta^\tau$, we have
    \begin{equation*}
        \mu_1 (\{ y : |x-y| \in J\} \setminus W(x)) \le \delta^{\beta^2/2}.
    \end{equation*}

    \textit{Proof of Claim.} Note that if $y \notin W(x)$, then $B(y, \delta) \subset G(x)$. For $x \in E_2' \setminus F'$ and $J \in \cJ_\delta^\tau$, we have
    \begin{align*}
        \delta^{\beta^2/2} &\ge \int_{|x-z| \in J^{(\delta)}} \mu_1^{\delta}|_{G(x)} (z) \, dz \\
        &= \delta^{-d} \iint_{|x-z| \in J^{(\delta)}, z \in G(x)} \phi(\delta^{-1} (z-y)) d\mu_1 (y) dz \\
        &\ge \delta^{-d} \iint_{|x-y| \in J, |y-z| \le \delta, y \notin W(x)} \phi(\delta^{-1} (z-y)) d\mu_1 (y) dz \\
        &\ge \int_{|x-y| \in J, y \notin W(x)} d\mu_1 (y) \int_{B(0, 1)} \phi(u) \, du \\
        &= \mu_1 (\{ y : |x-y| \in J \} \setminus W(x)). \qed
    \end{align*}
    

    Recall that $\mu_2(E_2\setminus E_2')\leq \delta^{\beta^2}$. So it remains to show $\mu_2 (F') \lesssim \delta^{\beta^2}$ (assuming good choice for $\beta, \eps$). For $x \in F'$, we have
    \begin{align*}
        \sup_{J \in \cJ^\tau_\delta} \int_{J^{(\delta)}} & |d_*^x (\mu_{1,g}^{\delta,x}) (t)| \, dt \\
        &\ge \sup_{J \in \cJ^\tau_\delta} \int_{J^{(\delta)}} d_*^x (\mu_1^\delta|_{G(x)}) (t) \, dt - \norm{d_*^x (\mu_1^\delta|_{G(x)}) - d_*^x (\mu_{1,g}^{\delta,x})}_{L^1} \\
        &\ge \delta^{\beta^2/2} - \delta^{\beta^2} \ge \delta^{\beta^2}.
    \end{align*}
Then by Cauchy-Schwarz, we have for $x \in F'$,
\begin{equation*}
    \left( \sup_{J \in \cJ_\delta^\tau} |J^{\delta}|^{\frac{1}{2}} \right) \left( \int |d_*^x (\mu_{1,g}^{\delta,x})(t)|^2 dt \right)^{\frac{1}{2}} \ge \sup_{J \in \cJ^\tau_\delta} \int_{J^{(\delta)}} |d_*^x (\mu_{1,g}^{\delta,x}) (t)| \, dt \ge \delta^{\beta^2}.
\end{equation*}
For $J \in \cJ_\delta^\tau$, $J$ and $J^{(\delta)}$ can both be covered by $\lesim \delta^{-\tau}$ many intervals each of length $\sim \delta$, so $\sup_{J \in \cJ_\delta^\tau} |J^{\delta}|^{1/2} \lesim \delta^{(1-\tau)/2}$. Thus, for $x \in F'$,
\begin{equation*}
    \norm{d_*^x (\mu_{1,g}^{\delta,x})}^2_{L^2} \ge \delta^{2\beta^2-(1-\tau)}q.
\end{equation*}
Integrate over $F'$ and apply Proposition \ref{mainest2} to get
\begin{align*}
    \delta^{2\beta^2-(1-\tau)} \mu_2 (F') &\le \int \norm{d_*^x (\mu_{1,g}^{\delta,x})}^2_{L^2} d\mu_2 (x) \\
            &\le \int |\widehat{\mu_1^{\delta}} (\xi)|^2 |\xi|^{-\frac{\alpha d}{d+1} + \eps} \, d\xi + \delta^{-d\beta} + \RapDec(\delta) \\
            &\le \int |\widehat{\mu_1} (\xi)|^2 |\hphi(\delta \xi)|^2 |\xi|^{-\frac{\alpha d}{d+1} + \eps} \, d\xi + \delta^{-d\beta} + \RapDec(\delta) \\
            &\le \int_{|\xi| \le \delta^{-1-\beta}} |\widehat{\mu_1} (\xi)|^2 |\xi|^{-\frac{\alpha d}{d+1} + \eps} \, d\xi + \delta^{-d\beta} + \RapDec(\delta)\,.
\end{align*}
Since $\tau < 1$, the condition $|\xi| \le \delta^{-1-\beta}$, which implies $|\xi|^{1-\beta} \le \delta^{-(1+\beta)(1-\beta)} \le \delta^{-1}$, gives us
\begin{equation*}
    \delta^{-3\beta^2+(1-\tau)} \le |\xi|^{(3\beta^2-(1-\tau))(1-\beta)} = |\xi|^{-(1-\tau)+O(\beta)}.
\end{equation*}
Thus,
\begin{align*}
    &\mu_2 (F') \\
    \le &\delta^{-2\beta^2+(1-\tau)} \int_{|\xi| \le \delta^{-1-\beta}} |\widehat{\mu_1} (\xi)|^2 |\xi|^{-\frac{\alpha d}{d+1} + \eps} \, d\xi + \delta^{-d\beta-2\beta^2+(1-\tau)} + \RapDec(\delta) \\
        \le &\delta^{\beta^2} \int |\widehat{\mu_1} (\xi)|^2 |\xi|^{-\frac{\alpha d}{d+1} + \eps - (1-\tau)+O(\beta)} \, d\xi + \delta^{-d\beta-2\beta^2+(1-\tau)} + \RapDec(\delta)\,.
\end{align*}
Finally, since $\tau < f(\alpha) = \alpha \cdot \frac{2d+1}{d+1} - (d-1)$ and $\tau < 1$, we may choose small $\eps$ and $\beta$ such that
\begin{gather*}
    -d\beta-2\beta^2 + (1-\tau) > \beta^2, \\
    -\frac{\alpha d}{d+1} + \eps - (1-\tau) + O(\beta) < -d + \alpha.
\end{gather*}
This guarantees the energy integral
\begin{equation*}
    \int |\widehat{\mu_1} (\xi)|^2 |\xi|^{-\frac{\alpha d}{d+1} + \eps - (1-\tau) + O(\beta)}\, d\xi
\end{equation*}
to be finite (in fact, $\lesim C_\mu$) and so $\mu_2 (F') \lesim \delta^{\beta^2}$.
\end{proof}

We will also need a result of Shmerkin to prove Theorem \ref{thm:distance_hausdorff}.

\begin{theorem} \cite[Theorem 6.3 and Theorem B.1]{shmerkin2022non} \label{thm: shmerkin}
    Fix $1\le k \le d-1$, $c>0$. Given $\kappa_1,\kappa_2>0$, there is $\gamma>0$ (depending continuously on $\kappa_1,\kappa_2$) such that the following holds.

Let $\mu,\nu$ be probability measures on $B^d(0,1)$ satisfying decay conditions
\begin{align*}
\mu(V) &\le C_\mu r^{\kappa_1},\\
\nu(V) &\le C_\nu r^{\kappa_2},
\end{align*}
for any $(r, k-1)$-plate $V$, and $0<r\leq 1$. Suppose $\nu$ gives zero mass to every $k$-dimensional affine plane.
Then  for all $x$ in a set $E$ of $\mu$-measure $\ge 1-c$ there is a set $K(x)$ with $\nu(K(x))\ge 1-c$ such that
\begin{equation} \label{eq:spherical-proj-non-concentration}
\nu(W\cap K(x)) \le r^\gamma,
\end{equation}
for any $r\in (0,r_0]$ and any $(r,k)$-plate $W$ passing through $x$, where $r_0>0$ depends only on $d, \mu, C_\nu, \kappa_2, c$.

Finally, the set $\{ (x,y) : x\in E, y\in K(x) \}$ is compact.
\end{theorem}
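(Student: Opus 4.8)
The plan is to derive this spherical non-concentration property from a single-scale radial projection estimate, upgraded to all scales by a pigeonholing over dyadic scales in the style of \cite[Appendix B]{shmerkin2022non}. It helps to picture a $(r,k)$-plate $W$ through $x$ via the radial projection $\pi_x(y)=(y-x)/|y-x|$: if $P$ is the central plane of $W$ then $\pi_x$ sends $P$ to a $(k-1)$-dimensional great subsphere $\Sigma_P\subset S^{d-1}$ and sends $W\cap\{|y-x|\ge r^{1/2}\}$ into the $r^{1/2}$-neighbourhood of $\Sigma_P$; since $B(x,r^{1/2})$ lies in an $(r^{1/2},k-1)$-plate, the hypothesis $\nu(V)\le C_\nu r^{\kappa_2}$ makes $\nu(B(x,r^{1/2}))\lesssim r^{\kappa_2/2}$ negligible, so at each scale ``$\nu(W)$ large'' is, up to adjusting exponents, the same as ``$\pi_{x\ast}\nu$ concentrates on a subspherical cap''. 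The sole use of the no-$k$-plane hypothesis is a compactness step: as $\nu$ is a probability measure on the compact ball that charges none of the compact family of $k$-planes meeting $B(0,1)$, there is $\rho_\ast>0$, which we take small enough that also $\rho_\ast^{\gamma_0}<c/10$ (with $\gamma_0$ as below), such that $\nu(W)<c/10$ for every $(\rho_\ast,k)$-plate $W$; this is the top scale of the induction, and since smaller plates sit inside larger ones the same bound holds at all finer scales.

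The crux, and the \textbf{main obstacle}, is the single-scale estimate: there is $\gamma_0>0$ depending continuously on $\kappa_1,\kappa_2$ (and $d,k$) and a fixed $\theta=\theta(\kappa_1,\kappa_2)\in(0,1)$ such that for each dyadic $r\le\rho_\ast$, outside a set of $\mu$-measure $\lesssim r^{\gamma_1}$ (for some $\gamma_1>0$), for every $x$ the collection $\cH_r(x)$ of $(r,k)$-plates $W\ni x$ with $\nu(W)\ge r^{\gamma_0}$ is confined to a single $(r^{\theta},k)$-plate. The cardinality bound $|\cH_r|\lesssim r^{-N\gamma_0}$ is Lemma \ref{lem:few_large_plates}; the confinement is obtained as in Lemma \ref{lem:concentrate_in_r0}, using the $(r,k-1)$-plate Frostman bound on $\mu$ in place of a ball bound, with exceptional $\mu$-set of measure $\lesssim (r^{1-\theta})^{\kappa_1}|\cH_r|^2\lesssim r^{(1-\theta)\kappa_1-2N\gamma_0}$, which is $\lesssim r^{\gamma_1}$ once $\gamma_0\ll\kappa_1$. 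The genuinely hard point — obtaining a quantitative gain with constants continuous in $\kappa_1,\kappa_2$ — is exactly the content of \cite[Theorem 6.3]{shmerkin2022non}; alternatively the estimate can be read off from Theorem \ref{conj:threshold} by covering a heavy $(r,k)$-plate through $x$ by $\sim r^{-(k-1)}$ tubes through $x$ and summing, or, when $\kappa_2>k-1$, from Orponen's radial projection theorem.

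With the single-scale estimate in hand, iterate it from $\rho_\ast$ downward along dyadic scales and let $E$ be the complement of the union of the exceptional $\mu$-sets; their measures decay geometrically in the scale, so $\mu(E)\ge 1-c$ after shrinking $\rho_\ast$. For $x\in E$ a scale-by-scale dichotomy applies: each heavy $(r,k)$-plate through $x$ (with $r\le r_0$) lies in its confining plate at scale $r^\theta$, which is itself comparably heavy there, hence lies in the confining plate at scale $r^{\theta^2}$, and so on; since $(\rho_\ast,k)$-plates are light this nested chain of confining plates stabilises at a scale $\le\rho_\ast$, its coarsest member $W_\ast(x)$ has $\nu(W_\ast(x))<\rho_\ast^{\gamma_0}<c/10$, and it contains every heavy plate through $x$ at every scale $\le r_0$. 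Setting $K(x):=\spt\nu\setminus W_\ast(x)$ we get $\nu(K(x))\ge 1-c/10\ge 1-c$; and for any $(r,k)$-plate $W\ni x$ with $r\le r_0$, either $W$ is heavy, so $W\subset W_\ast(x)$ and $\nu(W\cap K(x))=0$, or $W$ is light, so $\nu(W\cap K(x))\le\nu(W)<r^{\gamma_0}$; choosing $\gamma<\gamma_0$ (to absorb the $r^{1/2}$-versus-$r$ loss from the radial-projection picture) yields $\nu(W\cap K(x))\le r^{\gamma}$ in either case. Finally, heaviness of a plate is an upper semicontinuous condition and a finite union of closed plates is closed, so after replacing $E$ by a slightly smaller closed subset and each $K(x)$ by its closure — operations that only strengthen the estimates — the set $\{(x,y):x\in E,\ y\in K(x)\}$ is closed, hence compact.
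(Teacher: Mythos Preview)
The paper does not prove this theorem---it is quoted from \cite{shmerkin2022non} and used as a black box. Your sketch correctly identifies the two ingredients (single-scale confinement of heavy plates via Lemmas \ref{lem:few_large_plates} and \ref{lem:concentrate_in_r0}, plus a multiscale gluing), but the iteration that is supposed to produce a single $W_\ast(x)$ has a genuine gap.

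The faulty clause is ``which is itself comparably heavy there.'' If $W$ is heavy at scale $r$, i.e.\ $\nu(W)\ge r^{\gamma_0}$, and $V_r(x)\supset W$ is the confining $(r^\theta,k)$-plate, you only know $\nu(V_r(x))\ge r^{\gamma_0}$. For $V_r(x)$ to be heavy at its own scale $r^\theta$ you would need $\nu(V_r(x))\ge (r^\theta)^{\gamma_0}=r^{\theta\gamma_0}$, and since $0<r<1$ and $\theta\gamma_0<\gamma_0$ this is \emph{strictly stronger} than what you have. So the chain can stall after one step, and chains launched from different starting scales need not merge into a single coarsest plate; in particular, your compactness bound $\nu(W)<c/10$ for $(\rho_\ast,k)$-plates does not force termination, because it does not imply $\nu(W)<\rho_\ast^{\gamma_0}$. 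The argument in \cite[Appendix B]{shmerkin2022non} (mirrored in Lemma \ref{lem:refine E_2}(c) here) avoids nesting confining plates altogether: one lets $K(x)$ be the complement of \emph{all} heavy plates at \emph{all} dyadic scales $s\le\rho_\ast$, and bounds the removed $\nu$-mass by the dichotomy that the confining plate $V_s(x)$ is either heavy at the coarser scale $s^\theta$ (so the scale-$s$ contribution is absorbed there) or light (so it contributes at most $(s^\theta)^{\gamma_0}$); summing the geometric series over dyadic $s\le\rho_\ast$ gives $\lesssim\rho_\ast^{\theta\gamma_0}<c$. With that replacement the rest of your outline goes through.
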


We now prove the following theorem that implies the second claim of Theorem \ref{thm:distance_hausdorff} (that $\sup_{x \in E} \dim_H (\Delta_x (E)) \ge \min(f(\alpha), 1)$). (Specifically, apply Theorem \ref{thm:distance_hausdorff'} with $\alpha-\eps$ for any $\eps > 0$.) The first and third claims then follow by applying the second claim of Theorem \ref{thm:distance_hausdorff} to the set $\{ x \in E : \dim_H (\Delta_x (E)) \le \min(f(\alpha), 1)-\eps\}$ and taking a sequence of $\eps_n \to 0$.

\begin{theorem}\label{thm:distance_hausdorff'}
    Let $0<\alpha\leq d-1$. Suppose $E_1, E_2 \subset B^d(0,1)$ are separated by $\sim 1$, and each of them has positive $\alpha$-dimensional Hausdorff measure. Then there exists $x \in E_1 \cup E_2$ with $\dim_H (\Delta_x (E_1)) \ge \min(f(\alpha), 1)$.
\end{theorem}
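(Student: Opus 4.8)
The plan is to deduce Theorem \ref{thm:distance_hausdorff'} from the discretized statement of Proposition \ref{prop:non-concentration} by feeding it into Liu's dimension criterion (Lemma \ref{strategy}), after first disposing of the exceptional plate $W(x)$ produced by Proposition \ref{prop:non-concentration} by means of Shmerkin's spherical projection non-concentration theorem (Theorem \ref{thm: shmerkin}). First I fix, via Frostman's lemma, $\alpha$-dimensional probability measures $\mu_1,\mu_2$ supported on $E_1,E_2$ with common constant $C_\alpha$, and let $k$ be the integer with $k-1<\alpha\le k$ (so $k\le d-1$). I split into two cases according to whether $\mu_1$ charges a $k$-plane. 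If $\mu_1(V_0)>0$ for some $k$-dimensional affine plane $V_0$, then $\mu_1|_{V_0}$ is, up to normalization, an $\alpha$-dimensional measure on $E_1\cap V_0\subset V_0\cong\R^k$: when $k=1$ the map $y\mapsto|x-y|$ is locally bi-Lipschitz on $V_0\setminus\{x\}$, so $\dim_H\Delta_x(E_1)\ge\alpha$ for $x\in E_1\cap V_0$; when $k\ge 2$ one has $\alpha>k-1\ge k/2$, placing $E_1\cap V_0$ above the Falconer threshold in $\R^k$, so the known pinned results in dimension $k$ apply (Theorem \ref{thm: pinned} for $k\ge 3$; \cite{guth2020falconer} together with \cite{liu2020hausdorff} for $k=2$). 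In all cases $\dim_H\Delta_x(E_1)\ge\min(f(\alpha),1)$ follows from the elementary inequalities $f_k(\alpha)\ge f_d(\alpha)$ for $k<d$ (writing $f_m(\alpha)=\alpha\tfrac{2m+1}{m+1}-(m-1)$, one computes $f_k-f_d=(d-k)\bigl(1-\tfrac{\alpha}{(d+1)(k+1)}\bigr)>0$) and $\alpha\ge f_d(\alpha)$ for $\alpha\le d-1$. Henceforth I assume $\mu_1$ gives zero mass to every $k$-plane.

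Since any $\alpha$-dimensional measure $\mu_i$ satisfies $\mu_i(V)\lesssim r^{\alpha-(k-1)}$ for every $(r,k-1)$-plate $V$ (cover $V$ by $\sim r^{-(k-1)}$ balls of radius $r$), I apply Theorem \ref{thm: shmerkin} with $\mu=\mu_2$, $\nu=\mu_1$, $\kappa_1=\kappa_2=\alpha-(k-1)>0$ and a small $c\in(0,\tfrac12)$, obtaining $\gamma>0$, $r_0>0$, a set $E\subset E_2$ with $\mu_2(E)\ge 1-c$, and for each $x\in E$ a compact set $K(x)$ with $\mu_1(K(x))\ge 1-c$ and $\mu_1(W\cap K(x))\le r^\gamma$ for every $(r,k)$-plate $W$ through $x$ with $r\le r_0$. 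Crucially, $K(x)$ does not depend on scale.

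Now I fix $\tau<\min(f(\alpha),1)$ and let $\beta>0$ be the constant from Proposition \ref{prop:non-concentration}. For each $n$ with $2^{-n}<\delta_0$, that proposition at scale $\delta=2^{-n}$ gives $F_{2^{-n}}\subset E_2$ with $\mu_2(F_{2^{-n}})\lesssim 2^{-n\beta^2}$ such that for $x\notin F_{2^{-n}}$ there is a set $W_{2^{-n}}(x)$ --- which, inspecting its construction via Proposition \ref{mainest1} and Lemma \ref{lem:refine E_2}(b), may be taken to be a $(2\cdot 2^{-n\beta^2},k)$-plate passing through $x$ --- with $\mu_1(\{y:|x-y|\in J\}\setminus W_{2^{-n}}(x))\le 2^{-n\beta^2/2}$ for all $J\in\cJ^\tau_{2^{-n}}$. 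Choosing $n_0$ so large that $\sum_{n\ge n_0}\mu_2(F_{2^{-n}})<\tfrac12-c$ and $2\cdot 2^{-n_0\beta^2}\le r_0$, the set $E\setminus\bigcup_{n\ge n_0}F_{2^{-n}}$ has positive $\mu_2$-measure, so I pick $x$ in it (hence $x\in E_2$). For this $x$, any $n\ge n_0$ and any $J\in\cJ^\tau_{2^{-n}}$, combining the two bounds above with Shmerkin's estimate on the plate $W_{2^{-n}}(x)$ gives
\[
\mu_1\bigl(\{y:|x-y|\in J\}\cap K(x)\bigr)\le (2\cdot 2^{-n\beta^2})^\gamma+2^{-n\beta^2/2}\lesssim 2^{-2n\beta_L},\qquad \beta_L:=\tfrac14\min\bigl(\gamma\beta^2,\tfrac{\beta^2}{2}\bigr).
\]
Hence $\mu_E:=\mu_1|_{K(x)}/\mu_1(K(x))$ is a probability measure on the compact set $E_1\cap K(x)$ with $\mu_E(\{y:|y-x|\in J\})<2^{-n\beta_L}$ for all large $n$ and all unions $J$ of at most $2^{n\tau}$ intervals of length $\sim 2^{-n}$. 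Since $\tau<1$, Lemma \ref{strategy} yields $\dim_H\Delta_x(E_1\cap K(x))\ge\tau$, hence $\dim_H\Delta_x(E_1)\ge\tau$; letting $\tau\uparrow\min(f(\alpha),1)$ finishes the proof.

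The substantive difficulty is the treatment of the bad plate $W(x)$: one must observe that it can be chosen to pass through $x$, so that Shmerkin's plate non-concentration for $\mu_1$ applies and annihilates $\mu_1(W(x)\cap K(x))$, and then correctly reconcile the per-scale exceptional set $F_\delta$ of Proposition \ref{prop:non-concentration} with the single scale-independent set $K(x)$ coming from Shmerkin before invoking Liu's criterion. The case where $\mu_1$ charges a $k$-plane is routine but relies on the inequality $f_k(\alpha)\ge f_d(\alpha)$ and on the lower-dimensional results quoted above.
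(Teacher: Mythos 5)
Your proposal is correct and follows essentially the same path as the paper's proof: split on whether $\mu_1$ charges some $k$-plane; in the generic case combine Shmerkin's plate non-concentration theorem (Theorem \ref{thm: shmerkin}) with Proposition \ref{prop:non-concentration} and Liu's criterion (Lemma \ref{strategy}). You usefully make explicit that the exceptional plate $W(x)$ may be taken to pass through $x$ (this traces back to Lemma \ref{lem:refine E_2}(b)), which is exactly what allows Shmerkin's estimate to control $\mu_1(K(x)\cap W(x))$; the paper relies on this implicitly but never states it in Proposition \ref{prop:non-concentration}. Your alternative citations in the $k$-plane case are fine (the paper uses Peres--Schlag for $k\ge 3$; your use of Theorem \ref{thm: pinned} in dimension $k$ also works since $k-1>\frac{k}{2}+\frac14-\frac{1}{8k+4}$ once $k\ge 3$, and GIOW is superfluous for $k=2$). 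One small imprecision: Liu's pinned bound in two dimensions is $\min\bigl(\frac{4\alpha}{3}-\frac{2}{3},1\bigr)$, not $\min(f_2(\alpha),1)$, so the inequality $f_k\ge f_d$ you compute does not actually close the $k=2$ subcase; what is needed (and what both your proof and the paper's leave implicit) is the direct check $\frac{4\alpha}{3}-\frac{2}{3}\ge f_d(\alpha)$ for $1<\alpha\le 2$, $d\ge 3$, which does hold.

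There is one genuine loose end at the finish. For each fixed $\tau$ you select a single $x$ (depending on $\tau$ through $\beta$, $n_0$, and the sets $F_{2^{-n}}$) with $\dim_H\Delta_x(E_1)\ge\tau$, and then ``let $\tau\uparrow\min(f(\alpha),1)$.'' As written this only delivers $\sup_x\dim_H\Delta_x(E_1)\ge\min(f(\alpha),1)$, whereas the statement of Theorem \ref{thm:distance_hausdorff'} requires a single $x$ achieving the limiting bound. The paper circumvents this with Borel--Cantelli: for each fixed $\tau_m$, almost every $x\in E_2'$ lies in only finitely many $F_{2^{-n}}$, so the countable intersection over a sequence $\tau_m\uparrow\min(f(\alpha),1)$ still has positive $\mu_2$-measure, and any $x$ in this intersection works simultaneously for all $\tau_m$. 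Your version is easily repaired the same way, or by choosing $n_0(m)$ so that $\sum_m\mu_2\bigl(\bigcup_{n\ge n_0(m)}F_{2^{-n}}\bigr)$ is small enough that $E$ minus this union is nonempty.
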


\begin{proof}
Let $1 \le k \le d-1$, $k-1 < \alpha \le k$. Let $\mu_1, \mu_2$ be $\alpha$-dimensional measures supported on $E_1, E_2 $ respectively.
Suppose $\mu_1$ gives nonzero mass to some $k$-dimensional affine plane $H$.  We have  three possible cases:
\begin{itemize}
    \item If $k \ge 3$, then $\alpha > k-1 \ge \frac{k+1}{2}$, so by \cite{peres2000smoothness}, there exists $x \in E_1$ such that $|\Delta_x (E_1)| > 0$.
    
    \item If $k = 2$, then by \cite[Theorem 1.1]{liu2020hausdorff} we have  $\dim_H (\Delta_x (E_1)) \ge \min(f(\alpha), 1)$ for some $x\in E_1$. 

    \item If $k = 1$, then for all $x\in E_1 \cap H$, we have $\dim_H (\Delta_x (E_1)) \ge \dim_H (E_1 \cap H) \ge \alpha > f(\alpha)=\min(f(\alpha), 1)$. 
\end{itemize}
Now assume $\mu_1$ gives zero mass to every $k$-dimensional affine plane. 
Also, note that $\mu_1$ and $\mu_2$ satisfy decay conditions
$$
\mu_{i}(V) \lesssim C_{\mu_i} r^{\alpha-(k-1)}, \quad i=1,2,
$$
for any $(r, k-1)$-plate $V$, and $0<r\leq 1$.
Then we can use Theorem \ref{thm: shmerkin}
to find $r_0, \gamma > 0$ and a set $E_2' \subset E_2$ with $\mu_2 (E_2') \ge \frac{1}{2}$ such that for any $x \in E_2'$, there exists $K(x) \subset E_1$ with $\mu_1 (K(x)) \ge \frac{1}{2}$ such that for any $(r, k)$-plate $H$ containing $x$ with $r \le r_0$,
    \begin{equation*}
        \mu_1 (K(x) \cap H) \le r^\gamma.
    \end{equation*}
    Additionally, the set $\{ (x, y) : x \in E_2', y \in K(x) \}$ is compact, which in particular means that $K(x)$ is compact for all $x \in E_2'$.

    Fix $0 < \tau < \min(f(\alpha), 1)$. We apply Proposition \ref{prop:non-concentration} at all sufficiently small dyadic scales $\delta$. By the Borel-Cantelli lemma, a.e. $x \in E_2'$ lie in finitely many of the $F_\delta$. For such $x$, we have for all sufficiently small $\delta > 0$ and any $J$ which is a union of $\leq \delta^{-\tau}$ many intervals each of length $\sim \delta$,
    \begin{align*}
        \mu_1 (\{y : |x-y|& \in J \} \cap K(x)) \\
        &\le \mu_1 (\{y : |x-y| \in J \} \setminus W(x)) + \mu_1 (K(x) \cap W(x)) \\
        &\lesim \delta^{\beta^2/2} + \delta^{\gamma \beta^2}.
    \end{align*}
    Hence, by Lemma \ref{strategy} applied to the restricted measure $\mu_1|_{K(x)}$, we see that $\dim_H (\Delta_x (E_1)) \ge \tau$ for a.e. $x \in E_2'$. Taking a sequence of $\tau \to \min(f(\alpha), 1)$, we see that $\dim_H (\Delta_x (E_1)) \ge \min(f(\alpha), 1)$ for a.e. $x \in E_2'$.
\end{proof}

\section{Other norms and connections with Erd\H{o}s distance problem} \label{sec:othernorms}
Theorem \ref{thm: pinned} also extends to more general norms. For a symmetric convex body $K$ in $\R^d$, let $\|\cdot\|_K$ be the norm with unit ball $K$. 

\begin{theorem}\label{thm: pinned_general}
Let $d\geq 3$. Let $K$ be a symmetric convex body in $\R^d$ whose boundary is $C^\infty$ smooth and has strictly positive curvature.
Let $E\subset\mathbb{R}^d$ be a compact set. Suppose that $\dim_H (E)>\frac{d}{2}+\frac{1}{4}-\frac{1}{8d+4}$. Then there is a point $x\in E$ such that the pinned distance set $\Delta_{K,x}(E)$ has positive Lebesgue measure, where
$$
\Delta_{K,x}(E):=\{\|x-y\|_K:\, y\in E\}.
$$
\end{theorem}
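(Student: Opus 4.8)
The plan is to check that the proof of Theorem \ref{thm: pinned} is essentially insensitive to replacing $|\cdot|$ by $\|\cdot\|_K$, and then to rerun the deduction of Section \ref{sec-mainest}. The argument splits into two essentially independent halves. The first is purely affine and metric: the collection $\cE_{r,k}$, the Euclidean tubes $\T_j$, the $(r,k)$-plates, the radial projection theorem (Theorem \ref{conj:threshold}) and Lemmas \ref{lem:few_large_plates}--\ref{lem:concentrate_in_r0}, the heavy-plate collections $\cH_j$, the construction of $G(x)$ and $\mu_{1,g}^x$, and the entire bad-part analysis of Section \ref{sec:bad measure}. None of this mentions the distance function, so it is reused verbatim. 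The second half is the harmonic analysis of the unit sphere's surface measure; it enters only through (i) the wave-packet calculus of Section \ref{sec:smooth partitions} and the refined decoupling Theorem \ref{thm: dec}, (ii) the comparison Lemma \ref{lem:types of tube} (via Lemmas 3.1--3.2 of \cite{guth2020falconer}), and (iii) the $L^2$ identity of Liu \cite{LiuL2} used at the start of the proof of Proposition \ref{mainest2}. For this half one uses exactly the hypothesis on $K$: $\partial K$ is a $C^\infty$ hypersurface with everywhere strictly positive Gaussian curvature.

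First I would re-establish Proposition \ref{mainest1} for $\|\cdot\|_K$. The frequency blocks $\tau$ in Section \ref{sec:smooth partitions} are now chosen adapted to $R_j\partial K$ rather than to $R_j S^{d-1}$ --- the standard decomposition of a neighborhood of a curved hypersurface --- and the tubes $T\in\T_j$, though now pointing along normals of $\partial K$, are still genuine Euclidean tubes; hence Sections \ref{sec-radial} and \ref{sec:prop2.1} are unchanged except that $d^x$ becomes $d^x_K$. The only place $\|\cdot\|_K$ intervenes is Lemma \ref{lem:types of tube}: parts (b)--(d) concern only the Fourier multipliers $M_T$ and are oblivious to the norm, while part (a), the decay of $d^x_{K,*}(M_T\mu_1)$ when $2T$ misses $x$, follows from the same stationary-/non-stationary-phase computation as Lemma 3.1 of \cite{guth2020falconer}; its only input is that the Gauss map of $\partial K$ is a diffeomorphism (strict positive curvature) together with smoothness. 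Hence Lemma \ref{lem:types of tube}, and therefore Proposition \ref{mainest1}, hold with $\|\cdot\|_K$.

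Next, Proposition \ref{mainest2}. Writing $\rho_t$ for the smooth-density surface measure on $t\partial K$ furnished by the coarea formula, so that $d^x_{K,*}(\mu)(t)=t^{d-1}\,\mu*\rho_t(x)$ with density $\sim 1$, Lemma \ref{lem:small_r} is immediate (it uses only $\|\widehat{\rho_r}\|_\infty\lesssim 1$ and $\int\psi_r\lesssim(r+1)^{d-1}$), and Lemma \ref{lem:big_r} goes through with $\widehat\sigma_r$ replaced by $\widehat{\rho_r}$: the inputs are the $\ell^2$ decoupling of Bourgain--Demeter, valid for any $C^2$ hypersurface of non-vanishing Gaussian curvature and hence yielding Theorem \ref{thm: dec} for $\partial K$; the stationary-phase bound $|\widehat{\rho_1}(\eta)|\lesssim(1+|\eta|)^{-(d-1)/2}$; the Cauchy--Schwarz inequality $\|\widehat{h\,d\rho}\|_\infty\lesssim\rho(\mathrm{supp}\,h)^{1/2}\|h\|_{L^2(d\rho)}$; and the $\theta(T)$-orthogonality $\sum_T\|\widehat{M_T\mu_1}\|^2_{L^2(d\rho_r)}\lesssim\int|\widehat{\mu_1}|^2\psi_r$ --- all routine. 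The one genuinely sphere-specific ingredient is Liu's $L^2$ identity, which trades the physical integral $\int_0^\infty\int|\mu_{1,g}^x*\sigma_t(x)|^2 t^{d-1}\,dt\,d\mu_2(x)$ for the frequency-radius integral $\int_0^\infty\int|\mu_{1,g}^x*\widehat\sigma_r(x)|^2 r^{d-1}\,dr\,d\mu_2(x)$ and relies on the radiality of $\widehat{\sigma_1}$. I would replace it by a direct argument: since $\mu_{1,g}^x$ is essentially supported near $E_1$, only $t\sim 1$ contributes; the high-frequency part $\sum_{T\text{ good}}M_T\mu_1$ of $\mu_{1,g}^x$ is independent of $x$ (the $x$-dependent piece $M_0(\mu_1|_{G_0(x)})$ has frequency $\le R_0$ and contributes the harmless $R_0^d$ term via the Lemma \ref{lem:small_r}-type bound); and on $t\sim 1$ the asymptotic expansion of $\widehat{\rho_t}$ shows that the contribution of the dyadic frequency scale $R_j$ has $t$-frequency concentrated, modulo rapidly decaying tails, in $|s|\sim R_j$, so distinct scales are essentially orthogonal in the $t$ variable and the square function $\int_{t\sim1}\int_{E_2}|\mu_{1,g}^x*\rho_t(x)|^2\,d\mu_2(x)\,dt$ splits scale-by-scale; applying the per-scale refined-decoupling estimate (Lemma \ref{lem:big_r} for $\partial K$) and summing in $j$ recovers the bound of Proposition \ref{mainest2}, at the cost of logarithmic factors absorbed into the $\eps$. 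This replacement of Liu's identity is the step I expect to be the main obstacle, though it is routine in spirit.

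Finally, with Propositions \ref{mainest1} and \ref{mainest2} for $\|\cdot\|_K$ in hand, the deduction at the end of Section \ref{sec-mainest} applies unchanged, with $\Delta_x$, $d^x$ and $I_\lambda$ replaced by their $K$-analogues, once two classical facts are recorded for $\|\cdot\|_K$. First, the argument of \cite{falconer1985hausdorff} applies to any norm whose unit sphere is $C^\infty$ and strictly positively curved, so its $\tfrac{m+1}{2}$-threshold theorem holds both on $\R^d$ (for $\alpha\ge\tfrac{d+1}{2}$) and on any $k$-dimensional affine subspace (when $\mu_1$ charges one), which is what the deduction invokes. Second, the metric induced on any $k$-dimensional affine subspace by $\|\cdot\|_K$ comes from a norm whose unit sphere is again $C^\infty$ with strictly positive Gaussian curvature: the relevant section of $\partial K$ is transverse by strict convexity, and its second fundamental form is the restriction of the ambient one. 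This completes the proof of Theorem \ref{thm: pinned_general}.
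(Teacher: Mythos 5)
Your proposal is correct and follows the same route as the paper's own proof, which is essentially a two-sentence citation to \cite{guth2020falconer} and \cite{du2021improved}: the paper notes that the bad-tube/heavy-plate analysis is norm-independent, that only Lemma~\ref{lem:types of tube}(a) and the use of Liu's $L^2$-identity need modification, and then omits the details, which your proposal faithfully reconstructs. One small point to be careful about in your replacement of Liu's identity: after stationary phase, the $t$-Fourier transform of $\mu_{1,g}^x*\rho_t(x)$ is localized near level sets of the support function $h_K$, i.e.\ near dilates of the polar dual $\partial K^\circ$ (which is itself smooth with strictly positive curvature since $K$ is), so the per-scale refined decoupling should be invoked for $K^\circ$ rather than for $K$.
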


The argument for this generalization is similar to that in \cite{guth2020falconer}. Indeed, the definition of bad tubes and heavy plates depends only on the geometry of $\R^d$ and not the specific norm involved (note that the $r$-neighborhood of a set $A$ is still defined via Euclidean metric, not the new norm's metric). The change of norm only affects the conversion from geometry to analysis, as manifested in Lemma \ref{lem:types of tube}(a) and our use of Liu's $L^2$-identity \cite[Theorem 1.9]{LiuL2} in the proof of Proposition \ref{mainest2}. These considerations were already done in \cite{guth2020falconer} (see also \cite{du2021improved}); we omit the details.

As discussed in \cite{guth2020falconer, du2021improved}, one can go from Falconer-type results to Erd\H os-type results. 

\begin{definition} \label{sadaptable} Let $P$ be a set of $N$ points contained in ${[0,1]}^d$. Define the measure
\begin{equation} \label{pizdatayamera} d \mu^s_P(x)=N^{-1} \cdot N^{\frac{d}{s}} \cdot \sum_{p \in P} \chi_B(N^{\frac{1}{s}}(x-p))\, dx, \end{equation} where $\chi_B$ is the indicator function of the ball of radius $1$ centered at the origin. We say that $P$ is \emph{$s$-adaptable} if there exists $C$ independent of $N$ such that
\begin{equation} \label{sadaptenergy} I_s(\mu_P)=\int \int {|x-y|}^{-s} \,d\mu^s_P(x) \,d\mu^s_P(y) \leq C. \end{equation}
\end{definition}

It is not difficult to check that if the points in set $P$ are separated by distance $cN^{-1/s}$, then (\ref{sadaptenergy}) is equivalent to the condition
\begin{equation} \label{discreteenergy} \frac{1}{N^2} \sum_{p \not=p'} {|p-p'|}^{-s} \leq C, \end{equation}where the exact value of $C$ may be different from line to line. In dimension $d$, it is also easy to check that if the distance between any two points of $P$ is $\gtrsim N^{-1/d}$, then (\ref{discreteenergy}) holds for any $s \in [0,d)$, and hence $P$ is $s$-adaptable.

Using the same argument as in \cite{du2021improved}, from Theorem \ref{thm: pinned_general} we get the following Erd\H os-type result.
\begin{prop}
Let $d\geq 3$. Let $K$ be a symmetric convex body in $\R^d$ whose boundary is $C^\infty$ smooth and has strictly positive curvature. Let $P$ be a set of $N$ points contained in $[0,1]^d$. 

(a). If the distance between any two points of $P$ is $\gtrsim N^{-1/d}$, then there exists $x\in P$ such that
$$
\left| \Delta_{K,x}(P) \right| \gtrapprox N^{\frac{1}{\frac{d}{2}+\frac{1}{4}-\frac{1}{8d+4}}}=N^{\frac{2d+1}{d(d+1)}}.
$$

(b). More generally, if $P$ is $s_n$-adaptable for a decreasing sequence $(s_n)_{n=1}^\infty$ converging to $\frac{d}{2}+\frac{1}{4}-\frac{1}{8d+4}$, then there exists $x\in P$ such that
$$
\left| \Delta_{K,x}(P) \right| \gtrapprox N^{\frac{2d+1}{d(d+1)}}.
$$
\end{prop}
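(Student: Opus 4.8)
The plan is to carry out the Falconer-to-Erd\H os transference of \cite{du2021improved}, using Theorem \ref{thm: pinned_general} as the continuous input. The first step is to extract from the proof of Theorem \ref{thm: pinned} a \emph{quantitative} version of Theorem \ref{thm: pinned_general}: if $\mu_1,\mu_2$ are $\alpha$-dimensional probability measures with constant $C_\alpha$, supported on sets at distance $\gtrsim 1$, with $\alpha > \frac{d(d+1)}{2d+1}$, $k=\lfloor d/2\rfloor+1$, $\mu_1$ giving no mass to any $k$-plane, and $\mu_1(H)<\tfrac1{1000}$ for every $(r_0,k)$-plate $H$ (for some fixed $r_0>0$), then there is a set of $x$ of $\mu_2$-measure $>\tfrac12$ with $|\Delta_{K,x}(\spt\mu_1)|\ge c(d,\alpha,K,C_\alpha,r_0)>0$. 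Indeed, combining Propositions \ref{mainest1} and \ref{mainest2} exactly as in the proof of Theorem \ref{thm: pinned}, and using Markov on the $L^2$-estimate of Proposition \ref{mainest2}, yields $|\Delta_{K,x}(\spt\mu_1)|\gtrsim (I_\lambda(\mu_1)+R_0^{d})^{-1}$ on such a set, where $\lambda=d-\tfrac{\alpha d}{d+1}+\eps<\alpha$ for $\eps$ small; the points to note are that $R_0$ may be taken to be a constant depending only on $\eps$ and $r_0$, that $I_\lambda(\mu_1)\lesssim_{d,\alpha,\lambda}C_\alpha$ because $\mu_1$ is $\alpha$-Frostman with $\lambda<\alpha$, and that the passage from the Euclidean norm to $\|\cdot\|_K$ is exactly as explained after Theorem \ref{thm: pinned_general}. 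Hence the lower bound on $|\Delta_{K,x}(\spt\mu_1)|$ is uniform in the fine structure of $\mu_1,\mu_2$.

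The second step discretizes. In part (b), fix $s=s_n$; in part (a), fix any $s\in\big(\tfrac{d(d+1)}{2d+1},\lfloor d/2\rfloor+1\big)$, which is admissible since a point set whose points are $\gtrsim N^{-1/d}$-separated is $s$-adaptable for every $s<d$. Put $\rho=N^{-1/s}$ and let $\mu^s_P$ be the (renormalized) probability measure of \eqref{pizdatayamera}, so $\spt\mu^s_P\subset P^{(\rho)}$. By $s$-adaptability $I_s(\mu^s_P)\le C$, so applying Markov's inequality to $x\mapsto\int|x-y|^{-s}\,d\mu^s_P(y)$ gives a set of $\mu^s_P$-measure $\ge\tfrac12$ on which this potential is $\le 2C$; restricting and renormalizing to it produces an $s$-dimensional probability measure $\nu$ with constant $\lesssim C$, still supported in $P^{(\rho)}$ (in part (a) one may simply take $\nu=\mu^s_P$, as the separation already forces the Frostman bound with a uniform constant). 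Assuming one can split $\nu=\nu_1+\nu_2$ with $\spt\nu_1,\spt\nu_2$ at distance $\gtrsim 1$, $\nu(\spt\nu_i)\gtrsim 1$, and $\nu_1$ satisfying the $(r_0,k)$-plate hypothesis, normalize the pieces and apply the quantitative statement of the first step: this produces $x\in\spt\nu_2\subset P^{(\rho)}$, hence $x\in B(p_0,\rho)$ for some $p_0\in P$, with $|\Delta_{K,x}(\spt\nu_1)|\ge c>0$. Since $\spt\nu_1\subset P^{(\rho)}$ and $\|\cdot\|_K$ is bi-Lipschitz to the Euclidean norm, $\Delta_{K,x}(\spt\nu_1)\subset\Delta_{K,p_0}(P)^{(C\rho)}$, whose Lebesgue measure is at most $2C\rho\cdot\#\Delta_{K,p_0}(P)$; therefore $\#\Delta_{K,p_0}(P)\gtrsim c/\rho=c\,N^{1/s}$. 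Letting $s\downarrow\tfrac{d(d+1)}{2d+1}$ (i.e.\ $n\to\infty$ in part (b)) gives, for every $\eps>0$, $\#\Delta_{K,p_0}(P)\gtrsim_\eps N^{\frac{2d+1}{d(d+1)}-\eps}$, which is the claimed $|\Delta_{K,x}(P)|\gtrapprox N^{\frac{2d+1}{d(d+1)}}$.

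The main obstacle is exactly the splitting step and the degenerate configurations it must avoid, and this is where $s$-adaptability (or the $N^{-1/d}$-separation) is genuinely used. If $P$ lies in a ball of small constant radius, rescale by an $O(1)$ factor — which only improves the Frostman constant of $\mu^s_P$ — and iterate; $s$-adaptability (resp.\ separation) caps how many points fit in a small ball, so this terminates after $O(1)$ steps and $P$ then has diameter $\gtrsim 1$. If, after this, $\nu_1$ still assigns mass $\ge\tfrac1{1000}$ to some fixed-radius $(r_0,k)$-plate, then a constant fraction of $\nu$ lives near a $k$-dimensional plane; one restricts to that plane and projects orthogonally onto it (which distorts distances $\gtrsim\rho$ by a bounded factor and preserves $s$-adaptability up to constants), reducing to the Erd\H os problem in $\R^{k}$ — and since $k=\lfloor d/2\rfloor+1$ with $d\ge 3$ one checks $k\le\frac{2d^2-1}{2d+1}$, so the classical pinned Falconer bound (e.g.\ \cite{peres2000smoothness}) already gives the pinned exponent $\tfrac{2}{k+1}\ge\tfrac{2d+1}{d(d+1)}$ there (for $k\le 2$ invoke the known low-dimensional results, as in the proof of Theorem \ref{thm:distance_hausdorff'}); this recursion strictly lowers the ambient dimension, hence terminates. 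Otherwise $P$ spreads in all $d$ directions, the clean splitting into two far-apart pieces of comparable mass is available, and we are done. In part (a) the $N^{-1/d}$-separation makes all these degenerate cases vacuous: one cannot pack $N$ such points into a small ball or a thin neighborhood of a lower-dimensional plane, so $\mu^s_P$ is automatically $s$-Frostman with a uniform constant, automatically spread in all $d$ directions, and the plate hypothesis holds for an $r_0$ depending only on the separation constant — so part (a) follows directly with no recursion. All the reductions for part (b) are as in \cite{du2021improved}; the only new ingredient is that the continuous input is the sharper Theorem \ref{thm: pinned_general}, which merely lowers the admissible $s$ and thereby raises the exponent.
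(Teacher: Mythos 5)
The paper's own treatment of this proposition is a single sentence: it cites the Falconer-to-Erd\H os transference of \cite{du2021improved} applied to Theorem \ref{thm: pinned_general}. Your proposal reconstructs that transference, and the overall architecture (quantitative continuous input, discretization at scale $\rho=N^{-1/s}$, split, and $\#\Delta_{K,p_0}(P)\gtrsim c/\rho$) is the right one, and your part (a) argument is essentially complete: $N^{-1/d}$-separation gives the Frostman bound with a uniform constant, gives the $(r_0,k)$-plate hypothesis with $r_0$ depending only on the separation constant (volume count), and makes the far-apart splitting available, so the quantitative version of Theorem \ref{thm: pinned_general} you extract applies with all constants uniform in $N$.

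There is, however, a genuine gap in your handling of the degenerate case in part (b). The issue is the claim that orthogonal projection of the plate onto its central $k$-plane ``distorts distances $\gtrsim\rho$ by a bounded factor and preserves $s$-adaptability up to constants.'' This is false: two points of $P^{(\rho)}$ lying in a fixed-radius $(r_0,k)$-plate can be at Euclidean distance anywhere up to $\sim r_0$ while projecting to the \emph{same} point on the central plane, since the transverse displacement is $O(r_0)$ but $\rho\ll r_0$. Consequently the projected configuration need not be $\rho$-separated, and the pushforward of the $s$-Frostman restriction $\nu_1|_H$ is only $s$-Frostman at scales $t\gtrsim r_0$, not at the scale $\rho$ that the discretization requires. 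So the reduction to the Erd\H os problem in $\R^k$ at scale $\rho$ is not justified as written, and the inequality $\frac{2}{k+1}\ge\frac{2d+1}{d(d+1)}$ (which is correctly verified) is not enough on its own. This degenerate case is also genuinely nonvacuous for $s$-adaptable sets: half of $P$ can lie in a $k$-plane (with an $s$-adaptable configuration inside $\R^k$, possible since $\frac{d(d+1)}{2d+1}<k$) and the other half spread out, and the union remains $s$-adaptable while violating the plate hypothesis at every scale $\gtrsim\rho$. The underlying difficulty is that in your first step the lower bound $|\Delta_{K,x}|\gtrsim(I_\lambda+R_0^d)^{-1}$ needs $R_0^\beta>r_0^{-1}$, so if $r_0$ is allowed to shrink with $N$ the resulting bound degenerates; the plate hypothesis must hold with an $r_0$ bounded away from $0$ uniformly in $N$ for the transference to produce the claimed exponent, and your proposal does not establish this for part (b). The correct treatment of this degenerate case requires more than a straight projection (e.g.\ a pigeonholing over transverse $\rho$-cells together with an accounting of how the discretization parameter changes, or a different reduction altogether); as written the recursion does not close.
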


\bibliographystyle{plain}
\bibliography{main}

\vspace{1cm}
	
\noindent Xiumin Du, Northwestern University, \textit{xdu@northwestern.edu}\\

\noindent Yumeng Ou, University of Pennsylvania, \textit{yumengou@sas.upenn.edu}\\

\noindent Kevin Ren, Princeton University, \textit{kevinren@princeton.edu}\\

\noindent Ruixiang Zhang, UC Berkeley, \textit{ruixiang@berkeley.edu}

\end{document}